\numberwithin{equation}{section}
\newtheorem{theorem}{Theorem}[section]
\newtheorem{proposition}[theorem]{Proposition}
\newtheorem{lemma}[theorem]{Lemma}
\newtheorem{corollary}[theorem]{Corollary}
\theoremstyle{definition}
\newtheorem{example}[theorem]{Example}
\theoremstyle{remark}
\newtheorem{remark}[theorem]{Remark}
\newtheorem{claim}[theorem]{Claim}
\renewcommand{\ker}{\operatorname{Ker}}
\newcommand{\N}{\mathbb{N}}
\newcommand{\Z}{\mathbb{Z}}
\newcommand{\Q}{\mathbb{Q}}
\newcommand{\R}{\mathbb{R}}
\newcommand{\C}{\mathbb{C}}
\newcommand{\QZ}{\mathbb{Q}/\mathbb{Z}}
\newcommand{\emu}{{\mathbf e}_{\mu}}
\newcommand{\elambda}{{\mathbf e}_{\lambda}}
\newcommand{\SL}{{\rm SL}_2(\mathbb{Z})}
\newcommand{\Mp}{{\rm Mp}_2(\mathbb{Z})}
\newcommand{\pullL}{\uparrow_{L}^{L'}}
\newcommand{\pushLK}{\downarrow^{L}_{K}}
\newcommand{\ThetaK}{\Theta_{K^{+}}}
\newcommand{\ML}{M^{!}(L)}
\newcommand{\MLZ}{M^{!}(L)_{\mathbb{Z}}}
\newcommand{\MLQ}{M^{!}(L)_{\mathbb{Q}}}
\newcommand{\MLR}{M^{!}(L)_{R}}
\begin{document}

\title[]{Algebra of Borcherds products}
\author[]{Shouhei Ma}
\thanks{Supported by JSPS KAKENHI 17K14158, 20H00112, 21H00971.} 
\address{Department~of~Mathematics, Tokyo~Institute~of~Technology, Tokyo 152-8551, Japan}
\email{ma@math.titech.ac.jp}
\subjclass[2020]{11F37, 16S99, 11F27, 11F55, 11F50}
\keywords{Weil representation, modular forms, Borcherds products, associative ring} 
\maketitle

\begin{abstract}
Borcherds lift for an even lattice of signature $(p, q)$ is 
a lifting from weakly holomorphic modular forms of 
weight $(p-q)/2$ for the Weil representation. 
We introduce a new product operation on the space of such modular forms and develop a basic theory.  
The product makes this space a finitely generated filtered associative algebra, 
without unit element and noncommutative in general.  
This is functorial with respect to embedding of lattices by the quasi-pullback. 
Moreover, the rational space of modular forms with rational principal part is closed under this product. 
In some examples with $p=2$, 
the multiplicative group of Borcherds products of integral weight 
forms a subring. 
\end{abstract}


\section{Introduction}

Since ancient, mathematicians have introduced and studied product structures 
on various mathematical objects. 
In this paper we define a product structure on a space of 
certain vector-valued modular forms of fixed weight attached to an integral quadratic form, 
that is functorial and that reflects some properties of the quadratic form. 

Let $L$ be an even lattice of signature $(p, q)$ with $p\leq q$ and  
$\rho_{L}$ be the Weil representation attached to the discriminant form of $L$. 
In \cite{Bo95}, \cite{Bo98}, Borcherds constructed a lifting from 
weakly holomorphic modular forms $f$ of weight $\sigma(L)/2=(p-q)/2$ and type $\rho_{L}$ 
to automorphic forms $\Phi(f)$ with remarkable singularity on the symmetric domain attached to $L$. 
When $p=2$ and the principal part of $f$ has integral coefficients, 
$\Phi(f)$ gives rise to a meromorphic modular form $\Psi(f)$ with infinite product expansion, 
known as \textit{Borcherds product}. 

The discovery of Borcherds has stimulated the study of 
weakly holomorphic modular forms of weight $\sigma(L)/2$ and type $\rho_{L}$.  
If we consider the space of such modular forms,  
say ${\ML}$, 
it is a priori just an infinite dimensional ${\C}$-linear space. 
The purpose of this paper is to introduce a product operation on the space ${\ML}$ 
and investigate its basic properties.  
This makes ${\ML}$ an associative ${\C}$-algebra, finitely generated and filtered but without unit element in general. 
Moreover, this product is functorial with respect to embedding of lattices by the so-called 
\textit{quasi-pullback} operation. 
This gives a link between quadratic forms and noncommutative rings. 

To state our result, 
we assume that $L$ has Witt index $p$ ($=$ maximal).  
Our construction requires the choice of a maximal isotropic sublattice $I$ of $L$. 
Then $K=I^{\perp}/I$ is an even negative-definite lattice of rank $-\sigma(L)$. 
Let ${\pushLK}$ be the pushforward operation from $\rho_{L}$ to $\rho_{K}$ (\S \ref{ssec: Weil representation}), 
and ${\ThetaK}(\tau)$ be the $\rho_{K^{+}}$-valued theta series of the positive-definite lattice $K^{+}=K(-1)$. 
In \S \ref{sec: product}, we define the $\Theta$-product of $f_{1}, f_{2} \in {\ML}$ with respect to $I$ by 
\begin{equation*}
f_{1} \ast f_{2} =  \langle f_{1}{\pushLK}, {\ThetaK} \rangle \cdot f_{2}.  
\end{equation*}
Then $f_{1} \ast f_{2}$ is again an element of ${\ML}$. 

In what follows, an \textit{associative algebra} is not assumed to have a unit element. 
Our basic results can be summarized as follows. 

\begin{theorem}\label{thm: main}
The $\Theta$-product $\ast$ makes ${\ML}$ a finitely generated filtered associative ${\C}$-algebra. 
The algebra ${\ML}$ has a unit element if and only if $L\simeq U\oplus \cdots \oplus U$. 
The algebra ${\ML}$ is commutative if and only if $L$ is unimodular. 
When $\sigma(L)<0$, the rational space ${\MLQ}\subset {\ML}$ of modular forms with rational principal part is closed under $\ast$. 

If $L'$ is a sublattice of $L$ of signature $(p, q')$ with $I_{{\Q}}\subset L'_{{\Q}}$, 
the map 
\begin{equation*}
{\ML} \to M^{!}(L'), \quad f\mapsto |I/I'|^{-1} \cdot f|_{L'}, 
\end{equation*}
is a homomorphism of ${\C}$-algebras, 
where $I'=I\cap L'$ and 
$f|_{L'}\in M^{!}(L')$ is the quasi-pullback of $f\in {\ML}$ 
as defined in \eqref{eqn: define quasi-pullback}. 
\end{theorem}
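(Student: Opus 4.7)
The plan is to introduce the \emph{scalar reduction}
$\phi(f) := \langle f\pushLK, \ThetaK\rangle$
and observe that it is a scalar-valued weakly holomorphic modular form for $\SL$ of weight $\sigma(L)/2+(-\sigma(L))/2=0$, hence an element of $\C[j]$. With this notation the $\Theta$-product becomes simply $f_{1}\ast f_{2} = \phi(f_{1})\cdot f_{2}$, i.e.\ multiplication of $f_{2}$ by a polynomial in $j$. This manifestly preserves weak holomorphy and $\rho_{L}$-equivariance, so $f_{1}\ast f_{2}\in \ML$. Associativity is then immediate from the identity $\phi(g\cdot f) = g\cdot \phi(f)$ for $g\in \C[j]$, which follows from the $\C[j]$-linearity of both the pushforward $\pushLK$ and the theta pairing. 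For the filtration I would use the order of pole at the cusp, which is inherited additively from those of $\phi(f_{1})$ and $f_{2}$; finite generation I would deduce by taking a finite $\C[j]$-module basis of $\ML$ together with a distinguished element $f_{0}$ whose $\phi(f_{0})$ generates (a finite-index subring of) $\C[j]$, and iterating $\ast$.

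For the characterization of the unit: if $e$ is a two-sided unit then $f = f\ast e = \phi(f)\cdot e$ for every $f\in \ML$, so $\ML = \C[j]\cdot e$ is a free $\C[j]$-module of rank one. But on general grounds $\ML$ is $\C[j]$-free of rank $|L^{\ast}/L|$, so unimodularity is forced; moreover the surjectivity of $\phi:\ML\to \C[j]$ forces $\ThetaK$ to be nowhere vanishing on the upper half plane, which for a positive-definite lattice occurs only when $K=0$, i.e.\ $p=q$. Together with the maximal Witt index hypothesis this pins down $L\simeq U\oplus\cdots\oplus U$; conversely when $L=U^{\oplus p}$ the lattice $K$ is trivial, $\ThetaK = 1$, and $\ast$ reduces to ordinary multiplication in $\C[j]$ with unit $1$. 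For commutativity, if $L$ is unimodular then $\rho_{L}$ is trivial and $\phi(f)=f\cdot\ThetaK$, so $f_{1}\ast f_{2}=f_{1}f_{2}\ThetaK=f_{2}\ast f_{1}$; conversely when $L$ is not unimodular I would produce linearly independent $f_{1},f_{2}$ supported on distinct cosets of $L^{\ast}/L$ for which $\phi(f_{1})f_{2}\neq \phi(f_{2})f_{1}$, exploiting that $\pushLK$ separates enough classes of the Weil representation.

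The rational structure under $\sigma(L)<0$ should follow because $\pushLK$ is defined over $\Q$ and $\ThetaK$ has integer Fourier coefficients, so $\phi$ sends $\MLQ$ into $\Q[j]$; multiplying $f_{2}\in\MLQ$ by $\phi(f_{1})\in \Q[j]$ keeps the result in $\MLQ$, using the fact that for negative weight the $\Q$-structure of $\ML$ is determined by the principal part.

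The main obstacle will be the functoriality under quasi-pullback. The homomorphism property reduces to the single identity
\[
\phi'(f|_{L'}) = |I/I'|\cdot \phi(f),
\]
where $\phi'$ denotes the analogous scalar reduction on $M^{!}(L')$ built from $I'=I\cap L'$ and $K'=(I^{\perp}\cap L')/I'$; granted this, the claim is a one-line rescaling check. Establishing the identity requires unpacking \eqref{eqn: define quasi-pullback} and comparing the pushforwards to the two negative-definite lattices $K$ and $K'$, which do not have the same rank when $q'<q$. I expect that the rank difference contributes a theta factor from the orthogonal complement of $K'^{+}$ inside $K^{+}$ (compatible with the quasi-pullback formula) and that summing over cosets modulo $I/I'$ produces the scalar $|I/I'|$. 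Tracking these normalization factors, together with exhibiting the explicit noncommuting pair required for the commutativity converse, is where I expect the real technical work to concentrate.
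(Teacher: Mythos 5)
Your skeleton matches the paper's own: your scalar reduction $\phi$ is the paper's $\xi$, associativity and the filtration are handled identically, the rationality argument is the same (integrality of $\ThetaK$ plus the lemma that a rational principal part forces all Fourier coefficients to be rational, which is exactly what "the $\Q$-structure is determined by the principal part" must mean and which requires McGraw's theorem), and the reduction of functoriality to the single identity $\xi'(f|_{L'})=|I/I'|\cdot\xi(f)$ is precisely how the paper proceeds. The issue is that the two claims carrying essentially all the technical weight are asserted rather than proved. First, that $\ML$ is a free $\C[j]$-module of finite rank is not "general grounds": it is a proposition in its own right, the rank is $|A_L/\pm 1|$ rather than $|L^{\vee}/L|$ (the invariance $c_{-\lambda}(n)=c_{\lambda}(n)$ folds the components), and its proof needs the Borcherds duality/obstruction theorem to control which principal parts occur. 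You lean on it for finite generation, for the unit characterization, and implicitly for the commutativity converse. The paper's route to the last two avoids the module theorem: it shows directly that $\Theta^{\perp}=\ker\xi$ is nonzero (indeed infinite-dimensional) whenever $L$ is not unimodular, by comparing the growth $\dim M^!(L)_d\sim |A_L/\pm 1|\cdot d$ against $\dim M^!(K)_d\sim d$ via Borcherds duality, and then any $0\neq f_1\in\Theta^{\perp}$ together with $f_2\notin\Theta^{\perp}$ gives $f_1\ast f_2=0\neq f_2\ast f_1$ and rules out a right unit. Your route is viable, but only after you prove the freeness statement, which consumes the same duality input; and your phrase "supported on distinct cosets" is not usable for $\rho_L$-valued forms, since the Weil representation mixes components — the workable mechanism is $\C[j]$-linear independence or membership in $\Theta^{\perp}$.

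Second, the functoriality identity. You correctly predict that the split case contributes the theta factor (via $\ThetaK=\Theta_{(K')^{+}}\otimes\Theta_{N^{+}}$) and that $|I/I'|$ arises from the finite-index step, but the latter is not a soft coset count: it rests on an exact commutation relation
\begin{equation*}
\downarrow^{A}_{A_{2}}\circ\uparrow^{A}_{A_{1}}\;=\;|I_{1}\cap I_{2}|\,\uparrow^{A_{2}}_{A'}\circ\downarrow^{A_{1}}_{A'}
\end{equation*}
for a pair of isotropic subgroups $I_1,I_2$ of a finite quadratic module $A$, applied with $A=A_{L'}$, $I_1=L/L'$, $I_2=(I')^{\ast}/I'$, together with the identification of $A'$ with $A_K$. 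Establishing this diagram chase (and the vanishing of the terms with $\lambda\notin (I_2')^{\perp}$) is the technical core of the functoriality proof and is entirely absent from your sketch. Everything else in your proposal is sound and follows the paper's line.
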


Here the filtration on ${\ML}$ is defined by the degree of principal part. 
$U$ stands for the integral hyperbolic plane, namely 
the even unimodular lattice of signature $(1, 1)$. 
The quasi-pullback map $|_{L'}\colon {\ML}\to M^{!}(L')$  
is an operation coming from quasi-pullback of Borcherds products (\cite{Bo95}, \cite{B-K-P-SB}, \cite{Ma}), 
which is a sort of renormalized restriction.  
The statements in Theorem \ref{thm: main} are proved in Propositions 
\ref{prop: associativity et al}, 
\ref{prop: unimodular commutative}, 
\ref{prop: RUE}, 
\ref{prop: rational part}, 
\ref{prop: f.g.}, and 
\ref{prop: functorial main}.

The algebra structure on ${\ML}$ requires the choice of $I$, 
but actually it depends only on the equivalence class of $I$ under a natural subgroup 
of the orthogonal group of $L$. 
Geometrically, when $p=2$, such equivalence classes correspond to 
maximal boundary components of the Baily-Borel compactification of the associated modular variety. 

In some special cases, $\Theta$-product is a quite simple operation. 
When $L$ is unimodular, so that $f_{1}, f_{2}$ and ${\ThetaK}=\theta_{K^{+}}$ are scalar-valued, 
$f_{1}\ast f_{2}$ is just the product $f_{1} \cdot \theta_{K^+} \cdot f_{2}$ (Example \ref{ex: unimodular}). 
When $I$ comes from $pU=U\oplus \cdots \oplus U$ embedded in $L$, 
so that we have a splitting $L=pU\oplus K$, 
$f_{1}, f_{2}$ correspond to weakly holomorphic Jacobi forms $\phi_{1}(\tau, Z), \phi_{2}(\tau, Z)$ 
of weight $0$ and index $K^+$ (see \cite{Gr}). 
Then the Jacobi form corresponding to $f_{1}\ast f_{2}$ is  
$\phi_{1}(\tau, 0)\cdot \phi_{2}(\tau, Z)$ (Example \ref{ex: Jacobi form}). 
In general, one can say that $\Theta$-product $\ast $ is a functorial extension of  
this simple product to all pairs $(L, I)$. 

Since the correspondence 
$(L, I)\mapsto ({\ML}, \ast_{I})$ 
is a functor, 
we expect that 
the complexity of the lattice $L$ (within fixed $p$) 
would be reflected in the complexity of the algebra ${\ML}$ in some way. 
The first examples are given in Theorem \ref{thm: main}: 
commutativity and existence of (two-sided) unit element. 
More widely, we show that 
${\ML}$ has a left unit element 
if it contains a certain modular form with very mild singularity 
(Proposition \ref{prop: LUE}). 
Some reflective modular forms provide typical examples of such a modular form 
(Examples \ref{ex: reflective 1} and \ref{ex: reflective 2}). 
This might remind us of 
Borcherds' philosophy \cite{Bo00b} that 
for $L$ Lorentzian, 
existence of a reflective modular form 
should be related to interesting property of the reflection group of $L$. 

In the same direction, we expect that 
the minimal number of generators of ${\ML}$ would reflect the size of $L$.  
We give lower and upper bounds on the number of generators, 
and deduce finiteness of lattices with bounded number of generators 
for fixed $(p, q)$ (Proposition \ref{prop: finiteness}). 
In the simple example $L=pU\oplus \langle -2 \rangle$, 
the algebra ${\ML}$ is generated by two basic reflective modular forms (Example \ref{ex: generator}).

The fact that the rational part ${\MLQ}$ is closed under $\ast$ enables us to define, when $p=2$, 
a "$\Theta$-product" of two Borcherds products as a third Borcherds product up to powers. 
For some $L$, even the group of Borcherds products of integral weight is closed under $\ast$, 
so it forms a subring. 

To conclude, the present article is a proposal of a new ring structure on ${\ML}$ 
and is devoted to the basic theory. 
Besides to find a concrete application, 
we would have at least four subjects to investigate in the theory: 
\begin{enumerate}
\item find further connection between the lattice $L$ and the algebra ${\ML}$. 
\item find an interesting ${\ML}$-module. 
\item whether the finiteness theorem holds even if $q$ is allowed to vary. 
\item find a geometric interpretation of the new "unusual" product of Borcherds products or its Lie bracket. 
\end{enumerate}

This paper is organized as follows. 
\S \ref{sec: preliminary} is recollection of modular forms for the Weil representation. 
In \S \ref{sec: product} we define $\Theta$-product. 
In \S \ref{sec: first property} we study first properties of the algebra ${\ML}$. 
In \S \ref{sec: f.g.} we prove finite generation. 
In \S \ref{sec: functorial} we prove functoriality. 
\S \ref{sec: first property} -- \S \ref{sec: functorial} may be read independently. 


\textit{Unless stated otherwise, every ring in this paper is not assumed to be commutative nor have a unit element.}

\section{Weil representation and modular forms}\label{sec: preliminary}

In this section we recall some basic facts about 
modular forms of Weil representation type 
following \cite{Bo98}, \cite{Br}.

\subsection{Weil representation}\label{ssec: Weil representation} 

Let $L$ be an even lattice, namely 
a free abelian group of finite rank equipped with a nondegenerate symmetric bilinear form 
$(\cdot , \cdot) \colon L\times L \to {\Z}$ such that 
$(l ,l)\in 2{\Z}$ for all $l\in L$. 
When $L$ has signature $(p, q)$, we write $\sigma(L)=p-q$. 
The dual lattice of $L$ is denoted by $L^{\vee}$. 
The quotient $A_L=L^{\vee}/L$ is called the \textit{discriminant group} of $L$, 
and is endowed with the canonical ${\QZ}$-valued quadratic form 
$q_L \colon A_L\to {\QZ}$, $q_L(x)=(x, x)/2+{\Z}$, 
called the \textit{discriminant form} of $L$. 
In general, a finite abelian group $A$ endowed with a nondegenerate quadratic form 
$q \colon A\to{\QZ}$ is called a \textit{finite quadratic module}. 
We will frequently abbreviate $(A, q)$ as $A$. 
Every finite quadratic module $A$ is isometric to the discriminant form of some even lattice $L$. 
We then write $\sigma(A)=[\sigma(L)]\in {\Z}/8$. 
We denote by ${\C}A$ the group ring of $A$. 
The standard basis vector of ${\C}A$ corresponding to an element $\lambda\in A$ 
will be denoted by ${\elambda}$. 

Let ${\Mp}$ be the metaplectic double cover of ${\SL}$.   
Elements of ${\Mp}$ are pairs $(M, \phi)$ where 
$M=\begin{pmatrix}a & b \\ c & d \end{pmatrix}\in {\SL}$ 
and $\phi$ is a holomorphic function on the upper half plane such that $\phi(\tau)^2=c\tau+d$. 
The group ${\Mp}$ is generated by 
$T = \left( \begin{pmatrix}1&1\\ 0&1\end{pmatrix}, 1 \right)$ and 
$S = \left( \begin{pmatrix}0&-1\\ 1&0\end{pmatrix}, \sqrt{\tau} \right)$,  
and the center of ${\Mp}$ is generated by 
$Z = S^{2} = \left( \begin{pmatrix}-1& 0\\ 0 & -1 \end{pmatrix}, \sqrt{-1} \right)$. 

The \textit{Weil representation} $\rho_A$ of ${\Mp}$ attached to a finite quadratic module $A$ 
is a unitary representation on ${\C}A$ defined by 
\begin{eqnarray*}
\rho_A(T)({\elambda}) & = & e(q(\lambda)){\elambda}, \\ 
\rho_A(S)({\elambda}) & = & 
\frac{e(-\sigma(A)/8)}{\sqrt{|A|}} \sum_{\mu\in A}e(-(\lambda, \mu)){\emu}.  
\end{eqnarray*}
Here $e(z)={\rm exp}(2\pi i z)$ for $z\in{\Q}/{\Z}$. 
We have 
\begin{equation*}
\rho_A(Z)({\elambda})  = e(-\sigma(A)/4)\mathbf{e}_{-\lambda}. 
\end{equation*}
We will also write $\rho_A=\rho_L$ when $A=A_{L}$ for an even lattice $L$.

Let $A(-1)$ be the $(-1)$-scaling of $A$, 
namely the same underlying abelian group with the quadratic form $q$ replaced by $-q$. 
Then $\rho_{A(-1)}$ is canonically isomorphic to the dual representation $\rho_A^{\vee}$ of $\rho_A$. 
The isomorphism is defined by sending the standard basis of ${\C}A(-1)$ 
to the dual basis $\{ \mathbf{e}_{\lambda}^{\vee} \}$ 
of the standard basis $\{ \mathbf{e}_{\lambda} \}$ of ${\C}A$ 
through the identification $A(-1)=A$ as abelian groups. 
We will tacitly identify $\rho_{A(-1)}=\rho_{A}^{\vee}$ in this way. 
 
Let $I\subset A$ be an isotropic subgroup. 
Then $A'=I^{\perp}/I$ inherits the structure of a finite quadratic module. 
Let $p:I^{\perp}\to A'$ be the projection. 
We define linear maps 
\begin{equation}\label{eqn: pull push}
\uparrow_{A'}^{A} : {\C}A' \to {\C}A, \qquad 
\downarrow_{A'}^{A} : {\C}A \to {\C}A', 
\end{equation}
called \textit{pullback} and \textit{pushforward} respectively, by 
\begin{equation*}
{\elambda}\uparrow_{A'}^{A}=\sum_{\mu\in p^{-1}(\lambda)}{\emu}, \qquad 
{\emu}\downarrow_{A'}^{A} = 
\begin{cases}
\mathbf{e}_{p(\mu)}, & \mu\in I^{\perp}, \\ 
0, & \mu\not\in I^{\perp},  
\end{cases}
\end{equation*}
for $\lambda\in A'$ and $\mu\in A$. 
Then 
$\uparrow_{A'}^{A}$ and $\downarrow_{A'}^{A}$ are homomorphisms between the Weil representations 
(see, e.g., \cite{Bo98}, \cite{Br}, \cite{Ma}). 
Note that 
$\downarrow^{A}_{A'}\circ \uparrow^{A}_{A'}$ 
is the scalar multiplication by $|I|$. 
Note also that 
$\uparrow_{A'}^{A}$ and $\downarrow_{A'}^{A}$ 
are adjoint to each other with respect to the standard Hermitian metrics on ${\C}A$ and ${\C}A'$. 
When $A=A_{L}$ for an even lattice $L$, 
the isotropic subgroup $I$ corresponds to 
the even overlattice $L\subset L'\subset L^{\vee}$ of $L$ with $L'/L=I$. 
Then $A'=A_{L'}$. 
In this situation, we will also write 
$\uparrow_{A'}^{A} = \uparrow_{L'}^{L}$ and 
$\downarrow_{A'}^{A}=\downarrow_{L'}^{L}$.

\subsection{Modular forms}\label{ssec: modular form}

Let $A$ be a finite quadratic module and 
let $k\in \frac{1}{2}{\Z}$ with $k\equiv \sigma(A)/2$ modulo $2{\Z}$. 
(We will be interested in the case $k\leq 0$.) 
A $ {\C}A$-valued holomorphic function 
$f$ on the upper half plane is called a \textit{weakly holomorphic modular form} 
of weight $k$ and type $\rho_A$ if it satisfies 
$f(M\tau)=\phi(\tau)^{2k}\rho_A(M, \phi)f(\tau)$ 
for every $(M, \phi)\in {\Mp}$ 
and is meromorphic at the cusp. 
We write 
\begin{equation*}
f(\tau) = \sum_{\lambda\in A} \sum_{n\in q(\lambda)+{\Z}} c_{\lambda}(n) q^{n}{\elambda} 
\end{equation*}
for the Fourier expansion of $f$ 
where $q^n=\exp (2\pi in\tau)$ for $n\in {\Q}$. 
By the invariance under $Z$, we have 
$c_{-\lambda}(n)=c_{\lambda}(n)$. 
The finite sum 
$\sum_{\lambda} \sum_{n<0} c_{\lambda}(n) q^{n}{\elambda}$ 
is called the \textit{principal part} of $f$. 
When $k<0$, $f$ is determined by its principal part; 
when $k=0$, $f$ is determined by its principal part and constant term. 
We write $M^{!}_{k}(\rho_A)$ for the space of weakly holomorphic modular forms of weight $k$ and type $\rho_{A}$. 
By the Borcherds duality theorem (\cite{Bo00a}, \cite{Bo00b}, \cite{Br}), 
which polynomial arises as the principal part of some $f\in M^!_k(\rho_A)$ 
is determined by certain cusp forms as follows. 
This will be used in \S \ref{sec: first property} and \S \ref{sec: f.g.}. 

\begin{theorem}[\cite{Bo00a}, \cite{Bo00b}, \cite{Br}]
Let 
$P=\sum_{\lambda, n}c_{\lambda}(n)q^n{\elambda}$ 
be a ${\C}A$-valued polynomial 
where $\lambda\in A$ and $n\in q(\lambda)+{\Z}$ with $n<0$, 
such that $c_{-\lambda}(n)=c_{\lambda}(n)$. 
Then $P$ is the principal part of a weakly holomorphic modular form 
of weight $k\equiv \sigma(A)/2$ mod $2{\Z}$ and type $\rho_{A}$ 
if and only if 
$\sum_{n<0}c_{\lambda}(n)a_{\lambda}(-n)=0$ 
for every cusp form 
$\sum_{\lambda, m} a_{\lambda}(m)q^{m}\mathbf{e}_{\lambda}^{\vee}$ 
of weight $2-k$ and type $\rho_{A}^{\vee}$. 
\end{theorem}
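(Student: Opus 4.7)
The plan is to prove this by a standard duality argument, with the easy direction coming from the residue theorem and the hard direction from a dimension count via Riemann--Roch/Serre duality on the modular curve.

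For the necessity (``only if'') direction, I would pair $f \in M^!_k(\rho_A)$ having principal part $P$ with an arbitrary cusp form $g = \sum a_\lambda(m) q^m \mathbf{e}_\lambda^\vee$ of weight $2-k$ and type $\rho_A^\vee$ via the canonical pairing ${\C}A \otimes {\C}A^\vee \to {\C}$. Since $\rho_A \otimes \rho_A^\vee$ contains the trivial representation and $2k + 2(2-k) = 4$, the function $h(\tau) = \langle f(\tau), g(\tau) \rangle$ is a scalar weakly holomorphic modular form of weight $2$ for $\SL$; the metaplectic half-integer factors cancel. Then $h(\tau)\, d\tau$ descends to a meromorphic $1$-form on the compactified modular curve $X(1)$ with pole only at $i\infty$. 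The sum of residues vanishes, so the constant term of $h$ vanishes. Computing that constant term: because $g$ is cuspidal, only the principal part of $f$ contributes, and one finds exactly $\sum_{\lambda,\,n<0} c_\lambda(n) a_\lambda(-n) = 0$.

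For the sufficiency (``if'') direction, fix $N$ large enough that $P$ has pole order at most $N$, and let $V_N \subset {\C}A[q^{-1}]$ be the finite-dimensional space of formal principal parts of pole order $\leq N$ satisfying the symmetry $c_{-\lambda}(n)=c_\lambda(n)$. Let $M^{!,\leq N}_k(\rho_A)$ denote the subspace of $M^!_k(\rho_A)$ with pole order $\leq N$ at the cusp. Taking the principal part defines
\[
\pi \colon M^{!,\leq N}_k(\rho_A) \longrightarrow V_N, \qquad \ker(\pi) = M_k(\rho_A),
\]
and the pairing with cusp forms gives a linear map $\Lambda \colon V_N \to S_{2-k}(\rho_A^\vee)^\vee$. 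The easy direction shows $\im(\pi) \subseteq \ker(\Lambda)$, and the goal is equality.

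The heart of the argument is the dimension count
\[
\dim M^{!,\leq N}_k(\rho_A) - \dim M_k(\rho_A) = \dim V_N - \dim S_{2-k}(\rho_A^\vee),
\]
which implies $\dim \im(\pi) = \dim \ker(\Lambda)$ and forces $\im(\pi) = \ker(\Lambda)$. To establish the identity, I would interpret modular forms of weight $k$ and type $\rho_A$ as sections of an automorphic line bundle (more accurately, an orbifold line bundle on the metaplectic modular stack), and apply Serre duality or Riemann--Roch with the divisor $N \cdot [i\infty]$; the key input is the Kodaira--Spencer identification of the dualizing sheaf with the weight-$2$ bundle, which exchanges sections of weight $k$ twisted by $\mathcal{O}(N \cdot \infty)$ with cusp forms of weight $2-k$ and dual type. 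The main obstacle is carrying out this Serre-duality bookkeeping carefully in the vector-valued, half-integral weight, orbifold setting: one must track the contribution of the elliptic points of $\SL$, verify that cuspidality on the $g$-side corresponds to a twist by $-[i\infty]$ on the line bundle, and confirm that $\rho_{A(-1)} = \rho_A^\vee$ is precisely what appears after dualization. Once these identifications are in place, the equality of dimensions is automatic and the theorem follows.
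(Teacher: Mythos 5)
First, a point of reference: the paper does not prove this statement at all --- it is imported verbatim from Borcherds and Bruinier (\cite{Bo00a}, \cite{Bo00b}, \cite{Br}), so there is no internal proof to compare against. Your strategy is essentially Borcherds' original one: the residue theorem for the easy direction and Serre duality on the modular curve for the hard one. Your necessity argument is complete and correct: $\langle f, g\rangle$ is a scalar weight-$2$ weakly holomorphic form for $\SL$, the metaplectic factors cancel since $\phi^{4}=(c\tau+d)^{2}$, and the vanishing of the constant term of such a form (residue theorem applied to $h\,d\tau$ on $X(1)$) is exactly the stated pairing condition, with only the principal part of $f$ contributing because $g$ is cuspidal.

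The sufficiency direction, however, has a genuine gap in the dimension count. The identity $\dim M^{!,\leq N}_k - \dim M_k = \dim V_N - \dim S_{2-k}(\rho_A^\vee)$ is \emph{false} for general $N$: taking $N=0$ gives $0$ on the left and $-\dim S$ on the right. The correct statement carries a correction term $\dim\coker(\Lambda_N)$ (equivalently $h^{1}$ of the twisted line bundle), and it reduces to your identity only when $N$ is large enough that $\Lambda_N$ is surjective --- whereas you only require $N$ to exceed the pole order of $P$. Moreover, even granting the identity, it yields $\dim\im(\pi)=\dim V_N-\dim S$, while $\dim\ker(\Lambda)=\dim V_N-\operatorname{rank}(\Lambda)$; concluding $\im(\pi)=\ker(\Lambda)$ therefore requires separately proving that $\Lambda_N$ is surjective, which you never address. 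Both defects are repairable: take $N\gg0$ (the theorem for large $N$ implies it for all smaller $N$, since a form whose principal part is $P$ automatically has pole order equal to that of $P$), and prove surjectivity of $\Lambda_N$ by noting that a cusp form all of whose Fourier coefficients in degree $\leq N$ vanish is zero for $N\gg0$. The cleaner route, and the one Borcherds actually takes, avoids the dimension count entirely: the long exact cohomology sequence of $0\to\mathcal{L}\to\mathcal{L}(N\infty)\to\mathcal{L}(N\infty)/\mathcal{L}\to0$ gives exactness at $V_N$ on the nose for every $N$, with the connecting map $V_N\to H^1(X,\mathcal{L})\simeq S_{2-k}(\rho_A^\vee)^{\vee}$ identified with $\Lambda$ via Serre duality. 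As written, your argument does not close.
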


Theta series are typical examples of holomorphic modular forms of Weil representation type. 
Let $N$ be an even positive-definite lattice. 
By Borcherds \cite{Bo98}, 
the $\rho_N$-valued function 
\begin{equation*}
\Theta_{N}(\tau) 
= \sum_{l\in N^{\vee}} q^{(l, l)/2}\mathbf{e}_{[l]} 
= \sum_{\lambda, n}c_{\lambda}^{N}(n)q^{n}{\elambda},  
\end{equation*}
where $c_{\lambda}^{N}(n)$ is the number of vectors $l$ in $\lambda+N\subset N^{\vee}$ such that $(l, l)=2n$, 
is a holomorphic modular form of weight ${\rm rk}(N)/2$ and type $\rho_{N}$. 
All Fourier coefficients of $\Theta_{N}(\tau)$ are nonnegative integers. 
If $N'$ is an even overlattice of $N$, we have 
$\Theta_{N'}=\Theta_{N}\!\downarrow^{N}_{N'}$.

Let $L$ be an even lattice. 
For $A=A_{L}$ and $k=\sigma(L)/2$, 
we write 
\begin{equation*}
{\ML} = M^{!}_{\sigma(L)/2}(\rho_{L}). 
\end{equation*}
We especially write 
\begin{equation*}
M^{!}=M^{!}(U\oplus \cdots \oplus U)=M^{!}(\{ 0 \}), 
\end{equation*} 
which is just the space of scalar-valued weakly holomorphic modular forms of weight $0$. 
Then $M^{!}$ is the polynomial ring in the $j$-function 
$j(\tau)=q^{-1}+744+ \cdots$. 
It is a fundamental remark that for every even lattice $L$, ${\ML}$ is a $M^{!}$-module.

Let $p=2$. 
When the principal part of $f\in{\ML}$ has integral coefficients, 
Borcherds \cite{Bo95}, \cite{Bo98} constructed a meromorphic modular form 
$\Psi(f)$ on the Hermitian symmetric domain attached to $L$, 
now called a \textit{Borcherds product}, 
which has weight $c_{0}(0)/2\in {\Q}$ and whose divisor is 
a linear combination of Heegner divisors determined by the principal part of $f$. 
The lifting $f\mapsto \Psi(f)$ is multiplicative.


\section{$\Theta$-product}\label{sec: product} 

Let $L$ be an even lattice of signature $(p, q)$ with $p\leq q$ 
and assume that $L$ has Witt index $p$. 
We choose and fix a maximal ($=$ rank $p$, primitive) isotropic sublattice $I$ of $L$. 
In this section we define $\Theta$-product $\ast = \ast_{I}$ on the space 
${\ML}=M_{\sigma(L)/2}^{!}(\rho_{L})$ 
with respect to $I$, 
which makes ${\ML}$ an associative algebra.  
\S \ref{ssec: lattice lemma} is lattice-theoretic preliminary. 
$\Theta$-product is defined in \S \ref{ssec: theta product}. 
In \S \ref{ssec: example} we look at some examples.

\subsection{Preliminary}\label{ssec: lattice lemma}

We first prepare a lattice-theoretic lemma. 
We write $K=I^{\perp}\cap L/I$, which is an even negative-definite lattice of rank $-\sigma(L)$. 
We shall realize $K$ as an orthogonal direct summand of a canonical overlattice of $L$. 
Let $I^{\ast}=I_{{\Q}}\cap L^{\vee}$ be the primitive hull of $I$ in the dual lattice $L^{\vee}$. 
Then $L^{\ast}=\langle L, I^{\ast} \rangle$ is an even overlattice of $L$ with $L^{\ast}/L\simeq I^{\ast}/I$. 
For $rU=U\oplus \cdots \oplus U$ ($r$ times) 
we denote by $e_{1}, f_{1}, \cdots , e_{r}, f_{r}$ its standard basis, 
namely $(e_{i}, f_{j})=\delta_{ij}$ and $(e_i, e_j)=(f_i, f_j)=0$. 
We write $I_{r}=\langle e_{1}, \cdots, e_{r} \rangle$. 

\begin{lemma}\label{lem: overlattice split}
There exists an embedding $\varphi\colon pU \hookrightarrow L^{\ast}$ 
such that $\varphi(I_{p})=I^{\ast}$.  
In particular, we have 
$L^{\ast} = \varphi(pU)\oplus \varphi(pU)^{\perp} \simeq pU\oplus K$. 
The induced isometry 
$A_{L^{\ast}}\to A_{K}$ 
does not depend on the choice of $\varphi$.  
\end{lemma}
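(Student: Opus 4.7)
The plan is to build $\varphi$ from a hyperbolic dual basis to $I^{\ast}$ inside $L^{\ast}$, deduce the splitting from the unimodularity of $pU$, and prove the independence of the induced discriminant isometry by giving it an intrinsic description that bypasses $\varphi$.

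For setup, $I^{\ast}\subset I_{\Q}$ is isotropic and $I^{\ast}\subset L^{\vee}$ pairs integrally with $L$, so $L^{\ast}=L+I^{\ast}$ is an even overlattice of $L$ with $L^{\ast}/L\simeq I^{\ast}/I$, and $I^{\ast}$ remains primitive and isotropic in $L^{\ast}$. The key technical step is the surjectivity of the pairing map $L\to (I^{\ast})^{\vee}$. My plan is to extend a $\Z$-basis $e_1,\ldots,e_p$ of $I^{\ast}$ to a $\Z$-basis $e_1,\ldots,e_{p+q}$ of $L^{\vee}$, which is possible because $I^{\ast}$ is primitive in $L^{\vee}$ by definition, and then take the $\Z$-dual basis $e^{*}_1,\ldots,e^{*}_{p+q}\in L^{\vee\vee}=L$ (identified via the non-degenerate form). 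The first $p$ vectors satisfy $(e^{*}_i,e_j)=\delta_{ij}$, giving preimages of the dual basis of $(I^{\ast})^{\vee}$.

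Given this, I would correct each $e^{*}_i$ to $f_i=e^{*}_i+\sum_k a_{ik}e_k$ with $a_{ik}\in\Z$ chosen so that $(f_i,f_j)=0$ for all $i,j$; isotropy of $I^{\ast}$ preserves $(f_i,e_j)=\delta_{ij}$, the diagonal equations are solvable because $L^{\ast}$ is even, and the off-diagonal equations have trivial integer solutions. Each $f_i$ lies in $L+I^{\ast}=L^{\ast}$, so sending the standard hyperbolic basis of $pU$ to $(e_i,f_i)$ gives the desired $\varphi$ with $\varphi(I_p)=I^{\ast}$. The splitting $L^{\ast}=\varphi(pU)\oplus\varphi(pU)^{\perp}$ is then automatic from unimodularity of $pU$. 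To identify $\varphi(pU)^{\perp}$ with $K$, I would establish the intrinsic identity that the orthogonal complement of $I^{\ast}$ in $L^{\ast}$ equals $I^{\perp}+I^{\ast}$, by writing $v=l+i^{\ast}$ with $l\in L$, $i^{\ast}\in I^{\ast}$ and using isotropy of $I^{\ast}$ to reduce orthogonality to $l\in I^{\perp}$. Since $L\cap I^{\ast}=I$ by primitivity of $I$, quotienting by $I^{\ast}$ gives $I^{\perp}/I=K$, while the splitting forces the same orthogonal to equal $I^{\ast}\oplus\varphi(pU)^{\perp}$, so the projection restricts to an isometry $\varphi(pU)^{\perp}\simeq K$.

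For independence of the induced isometry $A_{L^{\ast}}\to A_K$, my plan is to give it the intrinsic description $[v]\mapsto [v']$, where $v'\in (L^{\ast})^{\vee}$ is any representative of $v$ modulo $L^{\ast}$ that is orthogonal to $I^{\ast}$, under the intrinsic identification of $A_K$ as (vectors in $(L^{\ast})^{\vee}$ orthogonal to $I^{\ast}$) modulo (vectors in $L^{\ast}$ orthogonal to $I^{\ast}$). Existence of $v'$ follows from the surjectivity above applied to $L^{\ast}$, well-definedness is a routine diagram chase, and preservation of the $\Q/\Z$-valued quadratic form reduces to the evenness of $L^{\ast}$. Writing $v=u+k\in\varphi(pU)\oplus K^{\vee}$ in any $\varphi$-splitting shows that $v'=k$ recovers the $\varphi$-induced isometry, so independence is immediate from the $\varphi$-free description. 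I expect the main obstacle to be the surjectivity step: the dual-basis trick using $L^{\vee\vee}=L$ is what makes it succeed, and it is the essential use of the primitivity of $I^{\ast}$ in $L^{\vee}$.
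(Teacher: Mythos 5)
Your proposal is correct, but it reaches the conclusion by a genuinely different route from the paper, most notably in the independence statement. For the existence of $\varphi$, the paper peels off hyperbolic planes one at a time: it uses primitivity of $I^{\ast}$ in $L^{\vee}$ to find $m_1$ with $(l_1,m_1)=1$, splits off $\langle l_1,m_1\rangle\simeq U$, and induction does the rest; you instead produce all $p$ dual vectors simultaneously via the $\Z$-dual basis in $L^{\vee\vee}=L$ and then make them isotropic by an integral correction in $I^{\ast}$ (your solvability checks --- evenness for the diagonal, the vanishing of $(e_k,e_l)$ for the cross terms --- are the right ones). The real divergence is in the last assertion: the paper proves a separate Claim that any two embeddings $\varphi_1,\varphi_2\colon rU\hookrightarrow L$ with $\varphi_i(I_r)=I$ differ by an isometry $\gamma$ of $L$ acting trivially on $I$, $K$ and $A_L$, constructed from Eichler transvections by induction on $r$; the independence of $A_{L^{\ast}}\to A_K$ is then read off from the properties of $\gamma$. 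You bypass this entirely by exhibiting a $\varphi$-free formula for the composite map, identifying $A_K$ with $\bigl((I^{\ast})^{\perp}\cap(L^{\ast})^{\vee}\bigr)/\bigl((I^{\ast})^{\perp}\cap L^{\ast}\bigr)$ and sending $[v]$ to the class of a representative orthogonal to $I^{\ast}$; checking in one splitting that this agrees with the $\varphi$-induced isometry then gives independence for free. Your argument is shorter and more self-contained for the purposes of this lemma; the paper's approach buys more, since the auxiliary Claim produces actual isometries of $L$ (a statement about the orthogonal group, in the spirit of the Eichler criterion) rather than just the equality of two maps on discriminant groups, though within this paper the Claim is not used elsewhere.
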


\begin{proof}
By the primitivity of $I^{\ast}$ in $L^{\vee}$, 
we have $(l, L^{\ast})=(l, L)={\Z}$ for any primitive vector $l$ in $I^{\ast}$. 
We take one such vector $l_1\in I^{\ast}$ and a vector $m_1\in L^{\ast}$ with $(l_1, m_1)=1$. 
Then $\langle l_1, m_1 \rangle \simeq U$ and 
we have a splitting $L^{\ast}=\langle l_1, m_1 \rangle \oplus L_{1}$ 
where $L_{1}=\langle l_1, m_1 \rangle^{\perp}\cap L^{\ast}$. 
The intersection 
$I_{1}=I^{\ast}\cap L_{1}$ satisfies $I^{\ast}=I_{1}\oplus {\Z}l_{1}$ 
and we have 
$(l, L_{1})=(l, L^{\ast})={\Z}$ for any primitive vector $l\in I_{1}$. 
Then we can repeat the same process for $I_{1}\subset L_{1}$. 
This eventually defines an embedding $\varphi\colon pU\hookrightarrow L^{\ast}$ with 
$\varphi(I_{p})=I^{\ast}$. 
We have natural isomorphisms 
\begin{equation*}
\varphi(pU)^{\perp}\cap L^{\ast} \stackrel{\simeq}{\to} 
(I^{\ast})^{\perp}\cap L^{\ast}/I^{\ast} = I^{\perp}\cap L/I =K. 
\end{equation*}

For the last assertion, we use the following construction. 
($I^{\ast}\subset L^{\ast}$ will be $I\subset L$ below.) 

\begin{claim}\label{claim: AL AK}
Let $L$ be an even lattice and $I\subset L$ be a primitive isotropic sublattice. 
Let $\varphi_{1}, \varphi_{2} \colon rU \hookrightarrow L$ 
be two embeddings such that  
$\varphi_{1}(I_{r})=\varphi_{2}(I_{r})=I$ 
and $\varphi_{1}|_{I_{r}}=\varphi_{2}|_{I_{r}}$. 
Then there exists 
an isometry $\gamma$ of $L$ which acts trivially on $I$, $K=I^{\perp}/I$ and $A_{L}$, 
such that $\varphi_{2}=\gamma \circ \varphi_{1}$. 
\end{claim}

The last assertion of Lemma \ref{lem: overlattice split} is deduced as follows. 
Let $\varphi_{1}, \varphi_{2} \colon rU \hookrightarrow L$ 
satisfy just $\varphi_{1}(I_{r})=\varphi_{2}(I_{r})=I$. 
We can find an isometry $\gamma'$ of $rU$ 
preserving $I_{r}$ and $\langle f_1, \cdots, f_r \rangle \simeq I_{r}^{\vee}$ 
such that 
$\varphi_{2}|_{I_{r}}=\varphi_{1}\circ \gamma'|_{I_{r}}$. 
Then there exists an isometry $\gamma$ of $L$ with properties as in Claim \ref{claim: AL AK} 
such that $\varphi_{2}=\gamma \circ \varphi_{1} \circ \gamma'$. 
If we write 
$K_{i}=\varphi_{i}(rU)^{\perp}\cap L$, 
then we have $\gamma(K_{1})=K_{2}$. 
The properties of $\gamma$ 
imply that the composition 
$A_{L}\to A_{K_{1}} \to A_{K}$ 
coincides with 
$A_{L}\to A_{K_{2}} \to A_{K}$,  
which is the desired assertion.  

We prove Claim \ref{claim: AL AK} by induction on $r$. 
When $r=1$, we let $l=\varphi_{i}(e_{1})$ and $m_{i}=\varphi_{i}(f_{1})$. 
Then as $\gamma$ we take the Eichler transvection $E_{l,m_{2}-m_{1}}$ (see, e.g., \cite{G-H-S}) 
which fixes $l$, sends $m_{1}$ to $m_{2}$, 
and acts trivially on $K$ and on $A_{L}$. 


For general $r$, 
let $rU=(r-1)U\oplus U$ be the apparent decomposition and let  
$\varphi_{i}'=\varphi_{i}|_{(r-1)U}$ and $I'=\varphi_{i}(I_{r-1})$. 
By induction, there exists an isometry $\gamma'$ of $L$ 
which acts trivially on $I'$, $(I')^{\perp}/I'$ and $A_{L}$, 
such that 
$\varphi_{2}'=\gamma' \circ \varphi_{1}'$. 
We show that $\gamma'$ also acts trivially on $I$ and $K=I^{\perp}/I$. 
Since $I/I'\subset (I')^{\perp}/I'$, $\gamma'$ preserves $I$. 
Since $K$ is a subquotient of $(I')^{\perp}/I'$, $\gamma$ acts trivially on $K$. 
We put $I''=\varphi_{i}({\Z}e_{r})$. 
Since 
\begin{equation*}
\gamma'(I'') = 
\gamma'(I\cap \varphi_{1}((r-1)U)^{\perp}) = 
I\cap \varphi_{2}((r-1)U)^{\perp} = I'', 
\end{equation*}
we find that $\gamma'$ preserves $I''$. 
Since the $\gamma'$-action on $I/I'$ is trivial, 
$\gamma'$ acts on $I''$ trivially. 
Thus $\gamma'$ acts on $I=I'\oplus I''$ trivially. 

We set 
$L'=\varphi_{2}((r-1)U)$ and 
$L''=(L')^{\perp}\cap L$.  
Then we can apply the result in the case $r=1$ to 
$\varphi_{1}''=\gamma' \circ \varphi_{1}|_{U}$, 
$\varphi_{2}''= \varphi_{2}|_{U}$, and 
$I''\subset L''$. 
This provides us with an isometry $\gamma_{L''}$ of $L''$ 
which acts trivially on $I''$, $(I'')^{\perp}/I''\simeq K$ and $A_{L''}\simeq A_{L}$, 
such that 
$\varphi_{2}''=\gamma_{L''}\circ \varphi_{1}''$. 
Now 
$\gamma=({\rm id}_{L'}\oplus \gamma_{L''})\circ \gamma'$ 
satisfies the desired properties. 
\end{proof}

\begin{remark}
The lattice $K$ can also be realized as a sublattice of $I^{\perp}\cap L$ as follows. 
We choose a basis $l_1, \cdots, l_p$ of $I$ and its dual basis 
$l_1^{\vee}, \cdots, l_p^{\vee}$ from $L^{\vee}$. 
We put $\tilde{K}=\langle l_1^{\vee}, \cdots, l_p^{\vee} \rangle^{\perp} \cap I^{\perp} \cap L$. 
By construction we have a splitting 
$I^{\perp}\cap L = I \oplus \tilde{K}$, 
so the projection gives an isometry $\tilde{K}\to K$. 
\end{remark}

\subsection{$\Theta$-product}\label{ssec: theta product}

We now define $\Theta$-product on ${\ML}$. 
We put $K^{+}=K(-1)$, which is an even positive-definite lattice of rank $-\sigma(L)$. 
We identify $A_{L^{\ast}}=A_{K}$ as in Lemma \ref{lem: overlattice split}. 
Let 
\begin{equation*}
{\pushLK} = \downarrow^{L}_{L^{\ast}} : A_{L} \to A_{L^{\ast}} = A_{K} 
\end{equation*}
be the pushforward operation defined in \eqref{eqn: pull push}. 
If $f\in {\ML}$, then 
$f{\pushLK}$ is an element of $M^{!}(K)$. 
We take the tensor product $f{\pushLK}\otimes {\ThetaK}$ with the theta series ${\ThetaK}$.  
This is a weakly holomorphic modular form of weight $0$ and type 
$\rho_{K}\otimes \rho_{K^{+}} \simeq \rho_{K}\otimes \rho_{K}^{\vee}$. 
Taking the contraction $\rho_{K}\otimes \rho_{K}^{\vee} \to {\C}$ 
produces a scalar-valued weakly holomorphic modular form of weight $0$, 
namely an element of $M^{!}$.  
We denote this modular function by 
\begin{equation*}
\xi(f) = \langle f{\pushLK}, {\ThetaK} \rangle \; \; \in M^{!}.  
\end{equation*}
The map $\xi \colon {\ML}\to M^{!}$ is $M^{!}$-linear. 

Now if $f_1, f_2\in {\ML}$, we define 
\begin{equation*}
f_{1} \ast f_{2} 
:= \; \xi(f_{1}) \cdot f_{2} = \langle f_{1}{\pushLK}, {\ThetaK} \rangle \cdot f_2. 
\end{equation*}
This is again an element of ${\ML}$. 
The map 
\begin{equation*}
\ast : {\ML} \times {\ML} \to {\ML} 
\end{equation*}
is $M^!$-bilinear. 
We write $\ast=\ast_{I}$ when we need to specify $I$. 

Explicitly, if 
$f_{i}(\tau)=\sum_{\lambda, n}c_{\lambda}^{i}(n)q^{n}{\elambda}$ 
for $i=1, 2$ and 
${\ThetaK}(\tau)=\sum_{\nu, m} c_{\nu}^{K}(m)q^{m}\mathbf{e}_{\nu}^{\vee}$, 
the Fourier coefficients of 
$f_{1}\ast f_{2}=\sum_{\lambda, n}c_{\lambda}(n)q^{n}{\elambda}$ 
are given by 
\begin{equation}\label{eqn: explicit coefficient}
c_{\lambda}(n) = 
\sum_{m+l+k=n}\sum_{\mu\in J^{\perp}} 
c^{1}_{\mu}(m) \cdot c^{K}_{p(\mu)}(l) \cdot c^{2}_{\lambda}(k). 
\end{equation}
Here 
$J=I^{\ast}/I\subset A_{L}$ 
and $p:J^{\perp}\to A_{K}$ is the projection. 
Note that even coefficients of $f_1, f_2$ in $n>0$, 
sometimes not being paid much attention, 
may contribute to the principal part of $f_1\ast f_2$.

\begin{proposition}\label{prop: associativity et al}
We have 
%
%
\begin{equation*}
(f_1\ast f_2)\ast f_3 = f_1\ast (f_2\ast f_3) 
\end{equation*}
for $f_1, f_2, f_3 \in {\ML}$. 
Therefore $\Theta$-product $\ast$ makes ${\ML}$ an associative ${\C}$-algebra. 
Moreover, the map $\xi \colon {\ML}\to M^{!}$ is a ring homomorphism.  
%
\end{proposition}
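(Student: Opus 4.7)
The plan is to reduce both assertions to the already-recorded $M^{!}$-linearity of the map $\xi \colon {\ML} \to M^{!}$, $\xi(f) = \fTheta$. The key observation is that the definition $f_1 \ast f_2 = \xi(f_1) \cdot f_2$ expresses $\Theta$-product as scalar multiplication by an element of the commutative ring $M^{!}$ acting on the $M^{!}$-module ${\ML}$, so the whole algebra structure is controlled by $\xi$ regarded as a map into $M^{!}$.

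For associativity I would unwind the left-hand side as
\[
(f_1 \ast f_2) \ast f_3 = \xi(f_1 \ast f_2) \cdot f_3 = \xi\bigl( \xi(f_1) \cdot f_2 \bigr) \cdot f_3,
\]
and then use $M^{!}$-linearity of $\xi$ to pull the scalar $\xi(f_1) \in M^{!}$ outside of $\xi$, producing $\xi(f_1) \xi(f_2) \cdot f_3$. The right-hand side $f_1 \ast (f_2 \ast f_3) = \xi(f_1) \cdot \bigl( \xi(f_2) \cdot f_3 \bigr)$ matches this by associativity of the ordinary $M^{!}$-module structure on ${\ML}$. Stripping off the trailing factor $f_3$ from the same calculation gives $\xi(f_1 \ast f_2) = \xi(\xi(f_1) f_2) = \xi(f_1) \xi(f_2)$, which is exactly the ring-homomorphism property of $\xi$; no ordering subtlety arises because the target $M^{!}$ is commutative.

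If the $M^{!}$-linearity of $\xi$ had to be justified from scratch rather than invoked, I would note that the pushforward ${\pushLK}$ acts componentwise on the Fourier expansion and is $\C$-linear on coefficient vectors, so $(gf){\pushLK} = g \cdot f{\pushLK}$ for any scalar $g \in M^{!}$, and the contraction pairing $\langle -, {\ThetaK} \rangle$ is bilinear and commutes with multiplication by a scalar modular form in either argument. Because everything then cascades formally from $M^{!}$-linearity, there is no real obstacle here: once one sees that the entire product factors through $\xi$, associativity and the homomorphism property each become a one-line verification.
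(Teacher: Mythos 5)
Your proof is correct and follows essentially the same route as the paper: both reduce associativity and the ring-homomorphism property of $\xi$ to the $M^{!}$-linearity of $\xi$ via the identity $\xi(\xi(f_1)\cdot f_2)=\xi(f_1)\cdot\xi(f_2)$. Your additional remark justifying the $M^{!}$-linearity itself is a harmless supplement to what the paper simply asserts before the proposition.
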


\begin{proof}
For the first assertion, we have 
\begin{eqnarray*}
(f_1\ast f_2)\ast f_3 
& =  & 
\xi(f_1\ast f_2) \cdot f_3  
\; = \; \xi(\xi(f_1) \cdot f_2) \cdot f_3 \\  
& = & 
\xi(f_1) \cdot \xi(f_2) \cdot f_3  
\; = \; \xi(f_1) \cdot (f_2 \ast f_3) \\
& = & 
f_{1}\ast (f_2 \ast f_3). 
\end{eqnarray*}
For the second assertion, we calculate  
\begin{equation*}
\xi(f_{1} \ast f_{2}) = 
\xi(\xi(f_{1})\cdot f_{2}) = 
\xi(f_{1})\cdot \xi(f_{2}).  
\end{equation*}
Thus $\xi$ preserves the products. 
\end{proof}

The algebra ${\ML}$ has the following filtration. 
For a natural number $d$ we denote by 
$M^{!}(L)_{d}\subset {\ML}$ 
the subspace of modular forms $f$ 
whose principal part has degree $\leq d$. 
Then we have 
\begin{equation*}
M^{!}(L)_{d} \ast M^{!}(L)_{d'} \subset M^{!}(L)_{d+d'}. 
\end{equation*}
Hence ${\ML}$ is a filtered algebra with this filtration. 

By the general theory of associative algebra, 
${\ML}$ has the structure of a Lie algebra by the commutator bracket 
\begin{equation}\label{eqn: Lie bracket}
[ f_1, f_2 ] = f_1\ast f_2 - f_2\ast f_1. 
\end{equation}
Since $\xi$ is a ring homomorphism and $M^!$ is commutative, 
these brackets are annihilated by $\xi$. 
We have  
\begin{equation*}
f_1\ast f_2\ast f_3 = f_2\ast f_1\ast f_3 
\end{equation*}
for $f_1, f_2, f_3 \in {\ML}$. 

The above construction requires the choice of 
a maximal isotropic sublattice $I$, 
so we should write $\ast=\ast_{I}$, $\xi=\xi_{I}$ and ${\ML}=M^{!}(L, I)$ 
when we want to specify this dependence. 
In fact, the freedom of choice is finite. 
If $\gamma\colon L\to L$ is an isometry of $L$, 
then $\gamma$ acts on $A_{L}$. 
Since the induced action on ${\C}A_{L}$ preserves the Weil representation $\rho_L$, 
$\gamma$ acts on ${\ML}$. 
We have 
$\xi_{\gamma I}(\gamma f)=\xi_{I}(f)$ 
and so 
\begin{equation*}
(\gamma f_{1})\ast_{\gamma I} (\gamma f_{2}) 
= \gamma (f_{1}\ast_{I} f_{2}). 
\end{equation*}
In other words, the action of $\gamma$ on ${\ML}$ gives an isomorphism 
\begin{equation*}
\gamma : M^!(L, I) \to M^!(L, \gamma I) 
\end{equation*}
of algebras. 
In particular, 
when $\gamma$ acts trivially on $A_{L}$, 
its action on ${\ML}$ is also trivial, 
so we have 
$M^{!}(L, I)=M^{!}(L, \gamma I)$ 
as algebras. 

To summarize, 
if ${\rm O}(L)$ is the orthogonal group of $L$ and 
$\Gamma_{L}<{\rm O}(L)$ is the kernel of the reduction map ${\rm O}(L)\to {\rm O}(A_{L})$, 
then $M^!(L, I)$ depends only on the $\Gamma_{L}$-equivalence class of $I$. 
Moreover, its isomorphism class depends only on the ${\rm O}(L)$-equivalence class of $I$. 
In particular, we have only finitely many algebra structures $M^!(L, I)$ on ${\ML}$ 
for a fixed lattice $L$. 

Geometrically, the $\Gamma_{L}$-equivalence class of $I$ corresponds more or less to 
a boundary component of some compactification of the locally symmetric space associated to $\Gamma_{L}$. 
(For example, when $p=2$, a boundary curve in the Baily-Borel compactification.) 
Perhaps this geometric picture might lead one to wonder whether it is possible to interpolate 
$M^!(L, I)$ and $M^!(L, I')$ for $I\not\sim I'$ 
by some continuous family of algebraic objects.

\subsection{Examples}\label{ssec: example}

We look at $\Theta$-product in some examples. 

\begin{example}\label{ex: unimodular}
Assume that $L$ is unimodular. 
Then $8 | \sigma(L)$. 
Modular forms of type $\rho_{L}$ are just scalar-valued modular forms. 
For any maximal isotropic sublattice $I$ we can find a splitting $L\simeq pU \oplus K$ with $I\subset pU$, 
and $K$ is also unimodular. 
In particular, ${\pushLK}$ is identity and 
${\ThetaK}=\theta_{K^{+}}$ is also scalar-valued. 
In this case, $\Theta$-product is just the product 
\begin{equation*}
f_1 \ast_{I} f_{2} = f_1 \cdot \theta_{K^{+}} \cdot f_2 
\end{equation*}
for $f_1, f_2\in {\ML}$. 
This shows that ${\ML}$ is commutative and has no zero divisor.  
Furthermore, ${\ML}$ has no unit element unless when $L=pU$. 
Indeed, if $f\in {\ML}$ is a unit element, then $f\cdot \theta_{K^{+}}=1$, 
but this is impossible when $K\ne \{ 0 \}$ 
because 
then $f$ would be a holomorphic modular form of negative weight. 
\end{example}

\begin{example}\label{ex: Jacobi form}
More generally, assume that we have a splitting 
$L=pU\oplus K$ with $I\subset pU$ ($K$ not necessarily unimodular). 
This is equivalent to $I=I^{\ast}$. 
In this situation, modular forms of type $\rho_{L}=\rho_{K}$ 
correspond to Jacobi forms of index $K^{+}$ as follows (see \cite{Gr} for more detail). 
Let 
$\Theta_{K^{+}}(\tau, Z) = \sum_{\lambda\in A_{K}} \theta_{K^{+}+\lambda}(\tau, Z)\mathbf{e}_{\lambda}^{\vee}$ 
be the $\rho_{K^{+}}$-valued Jacobi theta series. 
If $f(\tau)=\sum_{\lambda\in A_K}f_{\lambda}(\tau){\elambda}$ 
is a weakly holomorphic modular form of weight $\sigma(L)/2$ and type $\rho_{K}$, 
the function 
\begin{equation*}
\phi(\tau, Z) 
= \langle f(\tau), \, \Theta_{K^{+}}(\tau, Z) \rangle 
= \sum_{\lambda\in A_{K}} f_{\lambda}(\tau)\theta_{K^{+}+\lambda}(\tau, Z) 
\end{equation*}
given by the contraction 
$\rho_K\otimes \rho_K^{\vee} \to {\C}$ 
is a weakly holomorphic Jacobi form of weight $0$ and index $K^{+}$. 
This gives a one-to-one correspondence between two such forms. 
Note that 
the restriction $\phi(\tau, 0)$ of $\phi(\tau, Z)$ to $Z=0$  
is just the modular function $\xi(f)$ because 
$\Theta_{K^{+}}(\tau, 0) = \Theta_{K^{+}}(\tau)$. 

Now let $f_1, f_2 \in {\ML}$ 
and $\phi_1, \phi_2$ be the corresponding Jacobi forms. 
Then the Jacobi form corresponding to $f_1\ast_{I} f_2$ is 
\begin{equation*}
\phi_1(\tau, 0) \cdot \phi_2(\tau, Z). 
\end{equation*}
Indeed, we have 
\begin{eqnarray*}
\langle f_1\ast f_2(\tau), \: \Theta_{K^{+}}(\tau, Z) \rangle 
& = & 
\langle \xi(f_{1})(\tau) \cdot f_2(\tau), \: \Theta_{K^{+}}(\tau, Z) \rangle \\ 
& = &  
 \xi(f_{1})(\tau) \cdot \langle f_2(\tau), \: \Theta_{K^{+}}(\tau, Z) \rangle \\ 
& = &  
\phi_1(\tau, 0) \cdot \phi_2(\tau, Z). 
\end{eqnarray*}
Thus Jacobi form interpretation of $\Theta$-product is simple: 
substitute $Z=0$ into $\phi_1$ to obtain a scalar-valued modular function, 
and multiply it to $\phi_2$. 
$\Theta$-product for general $(L, I)$, not necessarily coming from $pU\hookrightarrow L$, 
can be thought of as a functorial extension of this simple operation using the pushforward operation $\downarrow^{L}_{K}$. 
\end{example}


\section{First properties}\label{sec: first property}

In this section we study some first properties of the algebra ${\ML}$. 
The reference maximal isotropic sublattice $I\subset L$ is fixed throughout and we write $\ast=\ast_{I}$. 
In \S \ref{ssec: annihilator} we study the left annihilator ideal of ${\ML}$, 
which plays a basic role in the study of ${\ML}$. 
In \S \ref{ssec: unit} we study existence/nonexistence of unit element. 
In \S \ref{ssec: integral part} we prove that the rational part ${\MLQ}$ is closed under $\ast$. 
\S \ref{ssec: unit} should be read after \S \ref{ssec: annihilator}, 
but \S \ref{ssec: integral part} may be read independently.

\subsection{Left annihilator}\label{ssec: annihilator} 

The left annihilator ideal of ${\ML}$ 
is a two-sided ideal of ${\ML}$. 
Since ${\ML}$ is torsion-free as a $M^{!}$-module, 
this coincides with the kernel of $\xi \colon {\ML}\to M^{!}$, 
which we denote by 
\begin{equation*}
\Theta^{\perp} = 
\{ \: f\in{\ML} \: | \: \langle f{\pushLK}, {\ThetaK} \rangle = 0 \: \}. 
\end{equation*}
This is also a sub $M^!$-module. 
Note that $\Theta^{\perp}$ also coincides with the left annihilator of any \textit{fixed} $g\ne0 \in {\ML}$. 
We have 
$(\Theta^{\perp})^{2}=0$. 
The ideal $\Theta^{\perp}$ is the maximal nilpotent ideal of ${\ML}$, 
consisting of all nilpotent elements of ${\ML}$.

\begin{proposition}\label{prop: theta kernel basic}
The quotient ring ${\ML}/\Theta^{\perp}$ is canonically identified with 
a nonzero ideal of the polynomial ring $M^{!}={\C}[j]$.  
Every homomorphism from ${\ML}$ to a ring without nonzero nilpotent element 
factors through ${\ML}\to {\ML}/\Theta^{\perp}$. 
\end{proposition}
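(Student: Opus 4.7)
The plan is to derive both statements as essentially formal consequences of Proposition \ref{prop: associativity et al}, once $\xi\colon{\ML}\to M^{!}$ is established to be nonzero.

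First I would note that $\xi$ is a ring homomorphism with kernel precisely $\Theta^{\perp}$ by definition, so the first isomorphism theorem yields an injective ring homomorphism $\bar\xi\colon{\ML}/\Theta^{\perp}\hookrightarrow M^{!}$. The map $\xi$ is moreover $M^{!}$-linear, since each of its constituents --- pushforward $\downarrow^{L}_{K}$, tensoring with $\Theta_{K^{+}}$, and the contraction $\rho_{K}\otimes\rho_{K}^{\vee}\to\C$ --- is $M^{!}$-linear. Hence $\xi({\ML})$ is an $M^{!}$-submodule of $M^{!}=\C[j]$, that is, an ideal of $\C[j]$.

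To show this ideal is nonzero, I would exhibit one $g\in M^{!}(K)$ with $\langle g,\Theta_{K^{+}}\rangle\neq 0$. Since $\downarrow^{L}_{L^{\ast}}\circ\uparrow^{L}_{L^{\ast}}=|I^{\ast}/I|\cdot\mathrm{id}$, the pushforward $\downarrow^{L}_{L^{\ast}}\colon M^{!}(L)\to M^{!}(K)$ is $\C$-linearly surjective, so any such $g$ lies in the image of $\downarrow^{L}_{K}$ and witnesses $\xi({\ML})\neq 0$. Since $K^{+}$ is positive definite, $\Theta_{K^{+}}=\sum_{\lambda}T_{\lambda}\mathbf{e}_{\lambda}^{\vee}$ has $T_{\lambda}$ holomorphic with $T_{\lambda}(0)=\delta_{\lambda,0}$; choosing $g$ whose principal part is supported on $\mathbf{e}_{0}$ alone, say $P=\sum_{n<0}c_{0}(n)q^{n}\mathbf{e}_{0}$ with lowest index $-N$, a direct computation shows that the coefficient of $q^{-N}$ in $\langle g,\Theta_{K^{+}}\rangle$ reduces to $c_{0}(-N)$. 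Borcherds duality realizes such a principal part $P$ as coming from some $g\in M^{!}(K)$ provided $\sum_{m\geq 1}c_{0}(-m)a^{\eta}_{0}(m)=0$ for every cusp form $\eta=\sum a^{\eta}_{\lambda}(m)q^{m}\mathbf{e}_{\lambda}^{\vee}$ of weight $2-\sigma(L)/2$ and type $\rho_{K}^{\vee}$; these are only finitely many linear conditions on the infinite-dimensional space of finite-support sequences $(c_{0}(-m))_{m\geq 1}$, so nontrivial allowable solutions exist.

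For the factorization statement, I would argue that any ring homomorphism $\phi\colon{\ML}\to R$ into a ring $R$ without nonzero nilpotents must annihilate $\Theta^{\perp}$: for $f\in\Theta^{\perp}$ we have $f\ast f=\xi(f)\cdot f=0$, so $\phi(f)^{2}=\phi(f\ast f)=0$, which forces $\phi(f)=0$ by hypothesis. Hence $\Theta^{\perp}\subset\ker\phi$ and $\phi$ factors through ${\ML}/\Theta^{\perp}$. The main obstacle is the non-vanishing of $\xi$: the identification as an ideal and the factorization are formal, whereas $\xi\neq 0$ requires the arithmetic input of Borcherds duality to realize a principal part supported on $\mathbf{e}_{0}$ with nonvanishing negative Fourier coefficients.
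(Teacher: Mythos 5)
Your proposal is correct and follows essentially the same route as the paper: identify ${\ML}/\Theta^{\perp}$ with the ideal $\xi({\ML})\subset{\C}[j]$ via $M^{!}$-linearity, reduce nonvanishing of $\xi$ to nonvanishing of $\langle\cdot,\Theta_{K^{+}}\rangle$ on $M^{!}(K)$ using surjectivity of the pushforward, and dispose of the factorization claim by noting $\phi(f)^{2}=\phi(f\ast f)=0$ for $f\in\Theta^{\perp}$. The only (immaterial) difference is that the paper produces the witness $g\in M^{!}(K)$ with leading term $q^{n}\mathbf{e}_{0}$ by citing its Lemma \ref{lem: leading term}, whereas you construct a principal part supported on $\mathbf{e}_{0}$ directly from Borcherds duality by a dimension count --- both arguments rest on the same input.
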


\begin{proof}
By the definition $\Theta^{\perp}={\rm Ker}(\xi)$, 
the quotient ${\ML}/\Theta^{\perp}$ is identified with 
the image $\xi({\ML})\subset M^!$ of $\xi$. 
Since $\xi$ is a $M^!$-linear map, 
$\xi({\ML})$ is an ideal of $M^!$. 
We shall show that $\xi$ is a nonzero map. 
Since the map ${\pushLK}\colon {\ML}\to M^{!}(K)$ is surjective, 
it suffices to check that the map 
$\langle \cdot , {\ThetaK} \rangle \colon M^!(K)\to M^{!}$ is nonzero. 
This can be seen, e.g., by taking a modular form $f\in M^!(K)$ 
with Fourier expansion of the form 
$f(\tau)=q^{n}\mathbf{e}_{0}+o(q^{n})$ 
for some negative integer $n$,  
which is possible as guaranteed by Lemma \ref{lem: leading term}.  
The last assertion follows by a standard argument. 
\end{proof}

\begin{proposition}\label{prop: unimodular commutative} 
The following three conditions are equivalent. 

(1) $L$ is unimodular. 

(2) ${\ML}$ is commutative. 

(3) $\Theta^{\perp} = \{ 0 \}$. 

\noindent
Moreover, if $\Theta^{\perp}\ne \{ 0 \}$, we have $\dim \Theta^{\perp} = \infty$. 
\end{proposition}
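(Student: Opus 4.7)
The plan is to prove the cycle $(1) \Rightarrow (3) \Rightarrow (2) \Rightarrow (3)$ and close it with $(3) \Rightarrow (1)$, then deduce the infinite-dimensionality. The first three implications are essentially algebraic; $(3) \Rightarrow (1)$ is the main analytic step.

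For $(1) \Rightarrow (3)$, I would invoke Example \ref{ex: unimodular}: in the unimodular case $\xi(f) = f \cdot \theta_{K^{+}}$ with $\theta_{K^{+}}$ a nonzero holomorphic scalar function (constant term $1$), so multiplication by it is injective and $\Theta^{\perp} = 0$. For $(3) \Rightarrow (2)$, since $\xi$ is a ring homomorphism by Proposition \ref{prop: associativity et al}, injectivity embeds $\ML$ into the commutative ring $M^{!}$, and commutativity descends. For $(2) \Rightarrow (3)$, I argue by contradiction: if $0 \ne g \in \Theta^{\perp}$ and $f$ is any element with $\xi(f) \ne 0$ (such $f$ exists by Proposition \ref{prop: theta kernel basic}), then commutativity gives
\begin{equation*}
\xi(f) \cdot g = f \ast g = g \ast f = \xi(g) \cdot f = 0,
\end{equation*}
contradicting torsion-freeness of $\ML$ as a module over the integral domain $M^{!} = {\C}[j]$.

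For the main step $(3) \Rightarrow (1)$, I plan a dimension comparison along the filtration by principal-part degree. Since $\ThetaK$ is holomorphic at the cusp and $\pushLK$ preserves pole orders, $\xi$ sends $M^{!}(L)_{d}$ into $M^{!}_{d}$, which has dimension $d+1$. By the Borcherds duality theorem recalled in \S \ref{sec: preliminary}, the space of admissible principal parts of pole order at most $d$ has dimension
\begin{equation*}
N(d) - \dim S_{2-k}(\rho_{L}^{\vee}), \qquad N(d) = \sum_{[\lambda]\in A_{L}/\pm} \bigl| \{ n \in q(\lambda)+{\Z} : -d \le n < 0 \} \bigr|,
\end{equation*}
so $\dim \ML_{d} = |A_{L}/\pm| \cdot d + O(1)$, with the constant absorbing both the finite-dimensional cusp-form obstruction and the constant-term contribution in weight $0$. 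Non-unimodularity gives $A_{L} \ne 0$, hence $|A_{L}/\pm| \ge 2$, and for $d \gg 0$ the source exceeds the target, forcing $\ker(\xi) \cap M^{!}(L)_{d} \ne 0$. The main obstacle is extracting this asymptotic lower bound cleanly from Borcherds duality; the rest is bookkeeping.

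For the moreover assertion, once a single $0 \ne f \in \Theta^{\perp}$ is exhibited, the family $\{ j^{n} f : n \ge 0 \}$ lies in $\Theta^{\perp}$ (which is an $M^{!}$-submodule since $\xi$ is $M^{!}$-linear) and is linearly independent by pole orders, so $\dim \Theta^{\perp} = \infty$. Alternatively, the dimension count above already yields $\dim (\Theta^{\perp} \cap M^{!}(L)_{d}) \ge (|A_{L}/\pm|-1)\cdot d + O(1) \to \infty$ directly.
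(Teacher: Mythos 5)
Your proposal is correct and follows essentially the same route as the paper: the easy implications come from Example \ref{ex: unimodular} and the identity $f\ast g=\xi(f)\cdot g$ together with torsion-freeness over $M^{!}={\C}[j]$, and the main implication $(3)\Rightarrow(1)$ (as well as the infinite-dimensionality) is the same Borcherds-duality growth comparison $\dim M^{!}(L)_{d}=|A_{L}/\pm1|\cdot d+O(1)$ against a target growing like $d$. The only cosmetic difference is that you compare directly with $M^{!}_{d}$ in one step, whereas the paper factors $\xi$ through $M^{!}(K)$ and splits into the cases $K$ unimodular or not; both yield the same bound $\dim(\Theta^{\perp}\cap M^{!}(L)_{d})\geq(|A_{L}/\pm1|-1)\cdot d+O(1)$.
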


\begin{proof}
(1) $\Rightarrow$ (2), (3) is observed in Example \ref{ex: unimodular}. 
(3) $\Rightarrow$ (2) holds because $[f_1, f_2]\in \Theta^{\perp}$. 
We check (2) $\Rightarrow$ (3). 
If $\Theta^{\perp}\ne \{ 0 \}$, 
we take $f_1\ne 0 \in \Theta^{\perp}$ and $f_2\not\in \Theta^{\perp}$. 
Then $f_1\ast f_2=0$ but $f_{2}\ast f_{1}\ne 0$, 
so ${\ML}$ is not commutative. 

Finally, we prove (3) $\Rightarrow$ (1). 
Suppose that $L$ is not unimodular. 
We shall show that $\dim \Theta^{\perp} = \infty$. 
We consider separately according to whether $K$ is unimodular or not. 
When $K$ is unimodular, 
$\Theta^{\perp}$ coincides with the kernel of the pushforward 
${\pushLK}\colon {\ML} \to M^{!}(K)$. 
We show that 
$\dim {\rm Ker}({\pushLK})= \infty$. 
The map ${\pushLK}$ preserves the degree filtration, 
namely 
$M^!(L)_{d}{\pushLK} \subset M^!(K)_d$. 
By the Borcherds duality theorem, we have 
\begin{eqnarray*}
\dim M^!(L)_d & = & |A_L/\pm 1| \cdot d+O(1), \\
\dim M^!(K)_d & = & 1\cdot d+O(1), 
\end{eqnarray*}
as $d$ grows. 
Therefore 
\begin{equation*}
\dim (\ker({\pushLK})\cap M^!(L)_d) \: \geq \: (|A_L/\pm1|-1)\cdot d + O(1) \to \infty 
\end{equation*}
as $d\to \infty$. 
Here $|A_L/\pm1|>1$ because $A_{L}\ne \{ 0 \}$. 

When $K$ is not unimodular, we can still argue similarly. 
The map 
${\pushLK}\colon {\ML}\to M^!(K)$ 
is surjective as 
the composition $\downarrow^{L}_{K} \circ \uparrow^{L}_{K}$ is a nonzero scalar multiplication.  
Therefore it is sufficient to show that the subspace 
$\ker \langle \cdot, {\ThetaK} \rangle$ of $M^!(K)$ 
has dimension $\infty$. 
The map 
$\langle \cdot, {\ThetaK} \rangle \colon M^!(K)\to M^!$ 
preserves the degree filtration, 
so we have similarly 
\begin{equation*}
\dim (\ker \langle \cdot, {\ThetaK} \rangle \cap M^!(K)_d) \: \geq \: (|A_K/\pm 1|-1)\cdot d + O(1) \to \infty 
\end{equation*}
as $d\to \infty$. 
This finishes the proof of (3) $\Rightarrow$ (1). 
\end{proof}

By Proposition \ref{prop: theta kernel basic}, 
${\ML}$ is decomposed into two parts: 
the ideal $\xi({\ML})$ in the polynomial ring $M^{!}={\C}[j]$, 
and the left annihilator $\Theta^{\perp}$. 
By the proof of (2) $\Rightarrow$ (3) in Proposition \ref{prop: unimodular commutative},  
the Lie brackets $[f, g]$ generate a large part of $\Theta^{\perp}$ 
containing at least $\xi({\ML})\cdot \Theta^{\perp}$. 
In \S \ref{sec: functorial} we will see that 
the kernels of the quasi-pullback maps provide 
natural examples of two-sided ideals contained in $\Theta^{\perp}$. 

\begin{remark}
We have only studied the left annihilator. 
The right annihilator of a fixed $f\in {\ML}$ 
coincides with the whole ${\ML}$ if $f\in \Theta^{\perp}$, 
while it is $\{ 0 \}$ if $f\not\in \Theta^{\perp}$. 
\end{remark}

\subsection{Unit element}\label{ssec: unit}

Next we study existence/nonexistence of unit element. 
Right unit element exists only in the apparent case. 

\begin{proposition}\label{prop: RUE}
${\ML}$ has a right unit element if and only if $L=pU$. 
In this case it is actually the two-sided unit element. 
\end{proposition}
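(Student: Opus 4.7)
The plan is to reduce to the commutative (unimodular) setting via Proposition~\ref{prop: unimodular commutative} and then invoke Example~\ref{ex: unimodular}.

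For the reverse direction, I would compute directly that $L=pU$ implies $\ML$ has a two-sided unit. Any maximal isotropic sublattice $I$ of $pU$ satisfies $I^{\perp}\cap L=I$, so $K=\{0\}$; then $\Theta_{K^{+}}\equiv 1$, the pushforward ${\pushLK}$ is the identity on the trivial representation $\C A_{L}=\C$, and $\xi(f)=f$ for every $f\in {\ML}=M^{!}$. Thus $\ast$ coincides with ordinary multiplication of scalar-valued weakly holomorphic weight-$0$ forms, and $1\in M^{!}$ is the two-sided unit.

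For the converse, I would suppose $e\in {\ML}$ is a right unit, so $\xi(f)\cdot e=f$ for every $f\in\ML$. The key observation is that applying this equation to any $f\in\Theta^{\perp}$ (where $\xi(f)=0$) yields $f=0$; hence $\Theta^{\perp}=\{0\}$. Proposition~\ref{prop: unimodular commutative} then forces $L$ to be unimodular and $\ast$ to be commutative, so $e$ is automatically two-sided (this gives the ``moreover'' clause for free). To conclude $L=pU$, I would invoke Example~\ref{ex: unimodular}: in the unimodular case $\ast$ is the ordinary multiplication $f_{1}\cdot\theta_{K^{+}}\cdot f_{2}$, so $e\cdot\theta_{K^{+}}=1$. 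Since $\theta_{K^{+}}$ is a holomorphic modular form with constant term $1$ at the cusp (and necessarily nonvanishing on the upper half plane, otherwise the product could not equal $1$), $e=1/\theta_{K^{+}}$ is a \emph{holomorphic} modular form of weight $-\mathrm{rk}(K)/2$; for $K\ne\{0\}$ this weight is negative, so $e$ must vanish identically, contradicting $e\cdot\theta_{K^{+}}=1$. Hence $K=\{0\}$, i.e., $L=pU$.

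There is no serious obstacle here: the heavy lifting has already been done in Proposition~\ref{prop: unimodular commutative} (which identifies the obstruction to commutativity with nonvanishing of $\Theta^{\perp}$) and in Example~\ref{ex: unimodular} (which treats the unimodular case). The proof is essentially a one-line reduction followed by invocation of these two earlier results.
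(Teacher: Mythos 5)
Your proof is correct and follows essentially the same route as the paper: use the existence of a nonzero element of $\Theta^{\perp}$ (via Proposition \ref{prop: unimodular commutative}) to force $L$ unimodular, then invoke the last part of Example \ref{ex: unimodular} to rule out $K\ne\{0\}$ because $1/\theta_{K^{+}}$ would be a holomorphic modular form of negative weight. The only difference is cosmetic — you argue via $\Theta^{\perp}=\{0\}$ and the equivalence (3) $\Leftrightarrow$ (1), while the paper argues by contraposition directly from $f\ast g=0\ne f$.
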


\begin{proof}
It suffices to verify the ``only if'' direction. 
Let $g\in {\ML}$ be a right unit element. 
If $L$ is not unimodular, we can take $f\ne 0 \in \Theta^{\perp}$ by Proposition \ref{prop: unimodular commutative}. 
Then $f\ast g=0\ne f$, which is absurd. 
So $L$ must be unimodular. 
Then the assertion follows from the last part of Example \ref{ex: unimodular}. 
\end{proof}

On the other hand, 
left unit element, though still relatively rare, 
exists in more cases. 
They are exactly modular forms $f\in {\ML}$ with $\xi(f)=1$. 
In particular, if $f$ is a left unit element, 
every element of $f+\Theta^{\perp}$ is so.

\begin{proposition}\label{prop: LUE}
(1) ${\ML}$ has a left unit element if and only if the homomorphism 
$\xi\colon {\ML}\to M^{!}$ is surjective. 
This always holds when $\sigma(L)=0$.

(2) If there exists a modular form $f\in {\ML}\backslash \Theta^{\perp}$ with $f(\tau)=o(q^{-1})$, 
then ${\ML}$ has a left unit element. 
Such a modular form $f$ exists only when $|\sigma(L)|<24$. 
\end{proposition}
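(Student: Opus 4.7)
My plan is to address Part (1) by reducing the existence of a left unit to the surjectivity of $\xi$, and to handle Part (2) in two steps: constructing the left unit from $f$, then establishing the weight bound.

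For Part (1), since $e \ast g = \xi(e) \cdot g$, an element $e \in {\ML}$ is a left unit iff $\xi(e) \cdot g = g$ for every $g \in {\ML}$. As ${\ML}$ is torsion-free over $M^{!}$ (pick any $g \ne 0$), this reduces to $\xi(e) = 1 \in M^{!}$. Since $\xi({\ML})$ is an ideal of $M^{!} \simeq {\C}[j]$ by Proposition \ref{prop: theta kernel basic}, containing $1$ is the same as being the whole ring, so a left unit exists iff $\xi$ is surjective. When $\sigma(L) = 0$ we have $K = 0$, hence $L^{\ast} \simeq pU$, $\Theta_{K^{+}} = 1$, and $\xi$ reduces to the pushforward $\downarrow^{L}_{L^{\ast}} \colon {\ML} \to M^{!}$; this is surjective since $\downarrow^{L}_{L^{\ast}} \circ \uparrow^{L}_{L^{\ast}} = |J| \cdot \mathrm{id}$ provides a right inverse up to scalar.

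For Part (2), first assume $f \in {\ML} \setminus \Theta^{\perp}$ satisfies $f = o(q^{-1})$. Because $K^{+}$ is positive definite, $\Theta_{K^{+}}$ has all Fourier exponents in ${\Q}_{\geq 0}$, so $\xi(f) = \langle f\downarrow^{L}_{K}, \Theta_{K^{+}}\rangle$ also satisfies $\xi(f) = o(q^{-1})$. Every element of $M^{!} = {\C}[j]$ whose lowest exponent is $>-1$ is a constant, so $\xi(f) = c \in {\C}$, with $c \ne 0$ by $f \notin \Theta^{\perp}$; then $f/c$ is a left unit by Part (1).

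For the bound $|\sigma(L)| < 24$, suppose for contradiction that $r := |\sigma(L)| \geq 24$ and such $f$ exists. Then $\tilde{e} := (f/\xi(f))\downarrow^{L}_{K} \in M^{!}(K)$ has weight $-r/2 \leq -12$, satisfies $\tilde{e} = o(q^{-1})$, and $\langle \tilde{e}, \Theta_{K^{+}}\rangle = 1$. Since holomorphic vector-valued modular forms of negative weight vanish, $\tilde{e}$ is determined by its principal part. In the scalar-unimodular case ($K$ unimodular, forcing $24 \mid r$), $M^{!}_{-r/2}({\C}) = {\C}[j]\cdot \Delta^{-r/24}$, and every nonzero element has leading term $q^{-r/24}$ with $r/24 \geq 1$; so $\tilde{e} = o(q^{-1})$ forces $\tilde{e} = 0$, contradicting $\langle\tilde{e},\Theta_{K^{+}}\rangle = 1$. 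For general $K$ one expands the $q^{0}$ coefficient of $\langle \tilde{e}, \Theta_{K^{+}}\rangle = 1$ as a linear relation on the principal and constant Fourier coefficients of $\tilde{e}$, and interprets this via the Borcherds duality theorem: the associated linear functional on $\{\tilde{e}\in M^{!}(K):\tilde{e}=o(q^{-1})\}$ is representable by a cusp form of weight $r/2 + 2 \geq 14$ and type $\rho_{K^{+}}$, forcing the constant term of $\langle \tilde{e}, \Theta_{K^{+}}\rangle$ to vanish. The hard part will be this vector-valued case: while the scalar-unimodular argument captures the threshold $r = 24$ via the explicit structure of $M^{!}_{-r/2}({\C})$, in general one must identify (or produce by dimension-counting) a cusp form of weight $r/2 + 2$ and type $\rho_{K^{+}}$ whose Fourier coefficients realize the functional $\tilde{e} \mapsto \langle \tilde{e}, \Theta_{K^{+}}\rangle(0)$ on forms with $\tilde{e} = o(q^{-1})$. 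The weight-$14$ threshold corresponds precisely to the first weight at which such cusp forms appear in sufficient supply.
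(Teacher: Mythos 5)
Your part (1) and the first claim of part (2) are correct and follow the same route as the paper: a left unit is exactly an element $e$ with $\xi(e)=1$ (by $M^{!}$-linearity and torsion-freeness), the case $\sigma(L)=0$ reduces to surjectivity of the pushforward, and $f=o(q^{-1})$ forces $\xi(f)$ to be a nonzero constant. The genuine gap is in the last assertion of (2), the bound $|\sigma(L)|<24$. You reduce to a statement about $\tilde{e}=(f/\xi(f))\!\downarrow^{L}_{K}$, settle only the scalar-unimodular case, and for general $K$ you openly defer to an unconstructed cusp form of weight $r/2+2$ that is supposed to represent the functional $\tilde{e}\mapsto\langle\tilde{e},\Theta_{K^{+}}\rangle(0)$. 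That step is not justified: the standard residue argument that kills such constant terms applies to pairings landing in weight $2$, whereas $\langle\tilde{e},\Theta_{K^{+}}\rangle$ has weight $0$, and the Borcherds duality theorem by itself does not hand you the required cusp form. So as written the general case is not proved.

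The argument you are missing is a one-liner and bypasses $K$ entirely: since $f=o(q^{-1})$ and $\Delta(\tau)=q+O(q^{2})$, the product $\Delta f$ is a holomorphic form vanishing at the cusp, i.e.\ a cusp form of type $\rho_{L}$ and weight $12+\sigma(L)/2$; it is nonzero because $f\notin\Theta^{\perp}$ implies $f\neq 0$, and a nonzero cusp form has positive weight, whence $|\sigma(L)|<24$. (This also shows that your intermediate target is true in a stronger form: for $|\sigma(L)|\geq 24$ any $f\in{\ML}$ with $f=o(q^{-1})$ is identically zero, so chasing the constant term of $\langle\tilde{e},\Theta_{K^{+}}\rangle$ was a detour even in principle.)
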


\begin{proof}
The first assertion of (1) holds because $\xi$ is $M^!$-linear. 
When $\sigma(L)=0$, 
we have $M^!(K)=M^!$ and 
$\xi={\pushLK}\colon {\ML}\to M^{!}(K)$ is surjective. 

Next we prove (2). 
If $f=o(q^{-1})$, we have 
$\xi(f) = o(q^{-1})$. 
Since Fourier expansion of elements of $M^!$ have only integral powers of $q$, 
we have in fact $\xi(f) = O(1)$. 
Hence $\xi(f)$ is a holomorphic modular function, 
namely a constant, 
which is nonzero by our assumption $f\not\in \Theta^{\perp}$. 
As for the last assertion of (2), 
we consider the product $\Delta f$ with the $\Delta$-function. 
This is a cusp form , so its weight $12+\sigma(L)/2$ must be positive. 
\end{proof}

The condition $f\not\in \Theta^{\perp}$ in Proposition \ref{prop: LUE} (2) is satisfied when 
the principal part of $f{\pushLK}$ has nonnegative (at least one nonzero) coefficients. 
Indeed, $\Theta_{K^{+}}(\tau)=\mathbf{e}_{0}^{\vee}+o(1)$ has nonnegative coefficients 
and the coefficient $c_{0}(0)$ of $f{\pushLK}$ is positive (\cite{Br}, \cite{B-K}), 
so $\xi(f)$ has nonzero constant term. 

Some reflective modular forms provide 
typical examples of modular forms as in Proposition \ref{prop: LUE} (2). 

\begin{example}\label{ex: reflective 1}
Let $L= pU \oplus \langle -2 \rangle$. 
Then $K^{+}= \langle 2 \rangle$. 
Let $\phi_{0,1}$ be the weak Jacobi form of weight $0$ and index $1$ 
constructed by Eichler-Zagier in \cite{E-Z} Theorem 9.3. 
The corresponding modular form in ${\ML}$ has Fourier expansion 
$f(\tau)=q^{-1/4}\mathbf{e}_{1}+10\mathbf{e}_{0}+o(1)$  
where $\mathbf{e}_{i}$ is the basis vector of ${\C}A_{L}$ corresponding to $[i]\in {\Z}/2\simeq A_{L}$. 
This modular form satisfies the condition in Proposition \ref{prop: LUE} (2). 
We will return to this example in Example \ref{ex: generator}. 
\end{example}

\begin{example}\label{ex: reflective 2}
More generally, let $L=pU\oplus \langle -2t \rangle$. 
Then $K=K_{t}=\langle -2t \rangle$. 
Eichler-Zagier's Jacobi form $\phi_{0,1}$ was generalized by Gritsenko-Nikulin in \cite{G-N} \S 2.2 
to Jacobi forms $\phi_{0,t}$ of weight $0$ and index $t$. 
For $t=2, 3, 4$, 
the $\rho_{K_{t}}$-valued modular form $f_{t}$ corresponding to $\phi_{0,t}$ 
has Fourier expansion 
$f_{t}(\tau) = q^{-1/4t}\mathbf{e}_{1}+a_{t}\mathbf{e}_{0}+ \cdots $ 
where 
$a_{t}=4, 2, 1$ for $t=2, 3, 4$ respectively. 
Thus $f_{t}$ for $t=2, 3, 4$ satisfy the condition in Proposition \ref{prop: LUE} (2). 
\end{example}

\subsection{Rational and integral part}\label{ssec: integral part}

We assume $\sigma(L)<0$ for simplicity. 
For a subring $R$ of ${\C}$ (typically ${\Z}$ or ${\Q}$) 
we write ${\MLR} \subset {\ML}$ 
for the group of modular forms $f$ whose principal part has coefficients in $R$. 
It is clear that 
${\MLZ}\otimes_{{\Z}}{\Q} = {\MLQ}$. 
Moreover, 
McGraw's rationality theorem \cite{Mc} and the Borcherds duality theorem 
imply that 
${\MLQ}\otimes_{{\Q}}{\C} = {\ML}$. 

\begin{proposition}\label{prop: rational part}
Let $\sigma(L)<0$. 
Then ${\MLQ}$ is closed under $\ast$. 
Hence it forms an associative ${\Q}$-algebra. 
\end{proposition}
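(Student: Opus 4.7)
The plan is to exploit the formula $f_1\ast f_2=\xi(f_1)\cdot f_2$ together with the rationality structure on ${\ML}$ recalled just above the proposition. The central task is to show that $\xi(f_1)$ lies in the rational polynomial ring $\Q[j]\subset M^{!}$ whenever $f_1\in\MLQ$; once this is in hand, multiplication by $f_2\in\MLQ$ visibly preserves rationality.

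The first and only delicate step is to upgrade rationality of the principal part to rationality of the full Fourier expansion for elements of $\MLQ$. Since $\sigma(L)<0$, the weight $k=\sigma(L)/2$ is strictly negative, so an element of ${\ML}$ is determined by its principal part; hence the principal-part map is injective on $\MLQ$. Combined with the identification $\MLQ\otimes_{\Q}\C=\ML$ that the author has just recalled (via McGraw's rationality theorem and Borcherds duality), a dimension count then forces $\MLQ$ to coincide with the canonical $\Q$-structure on $\ML$ cut out by the condition that every Fourier coefficient lies in $\Q$. Concretely, choosing a $\Q$-basis of $\MLQ$ one obtains a $\C$-basis of $\ML$ whose elements have $\Q$-linearly independent rational principal parts, which then spans the rational points of the $\Q$-defined image of the principal-part map; any element of $\MLQ$ is therefore a rational combination of these basis elements and hence has rational Fourier coefficients throughout.

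With this in hand, I would analyze $\xi(f_1)=\langle f_1\pushLK,\ThetaK\rangle$. The pushforward $\pushLK$ merely sums Fourier coefficients of $f_1$ over cosets of $A_L$ and so preserves rationality, while $\ThetaK$ has nonnegative integer Fourier coefficients by its definition as a lattice-point counting function. Computing $\xi(f_1)$ via the contraction $\rho_K\otimes\rho_K^{\vee}\to\C$ therefore yields a scalar-valued element of $M^{!}$ whose $q$-expansion has rational coefficients. Since $M^{!}=\C[j]$ and $j=q^{-1}+744+\cdots$ has integer Fourier coefficients, the change of basis between polynomial coefficients and $q$-expansion is unipotent over $\Z$, so rationality of the $q$-expansion is equivalent to membership in $\Q[j]$; hence $\xi(f_1)\in\Q[j]$.

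Finally, $f_1\ast f_2=\xi(f_1)\cdot f_2$ is the pointwise product of two functions both with rational Fourier expansion (using the previous step for $\xi(f_1)$ and the first step for $f_2$), so its $q$-expansion is rational and, in particular, its principal part has rational coefficients. Thus $f_1\ast f_2\in\MLQ$, and $\MLQ$ is a $\Q$-subalgebra. The main obstacle is exactly the first step; everything after is direct bookkeeping with the explicit formula \eqref{eqn: explicit coefficient}, where one notes that the higher Fourier coefficients of $f_1, f_2$ can contribute to the principal part of the product but cause no harm once full rationality has been secured.
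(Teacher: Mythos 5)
Your overall strategy --- reduce everything to showing that $\xi(f_1)$ has a rational $q$-expansion, using the integrality of $\ThetaK$, the rationality-preserving nature of $\pushLK$, and the explicit coefficient formula --- is exactly the paper's, and your second and third steps are fine. The problem is the first step, which you yourself single out as the only delicate one: the argument you give for it does not work. Injectivity of the principal-part map together with $\MLQ\otimes_{\Q}\C=\ML$ does not force elements of $\MLQ$ to have rational Fourier coefficients in nonnegative degrees. A $\Q$-form singled out by rationality of a partial set of coordinates need not be rational in the remaining coordinates: in $\C^2$ with ``principal part'' the first coordinate, the line spanned by $(1,i)$ has the $\Q$-form $\Q\cdot(1,i)$, which satisfies all of your hypotheses (injective principal-part map, $\Q$-form tensoring up to the whole space, rational principal parts) yet has irrational second coordinate. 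Your ``concrete'' version of the argument is circular: you write an arbitrary $f\in\MLQ$ as a rational combination of a $\Q$-basis of $\MLQ$ and conclude that $f$ has rational coefficients throughout --- but that presupposes the basis elements themselves have rational coefficients throughout, which is precisely the statement being proved. Equivalently, the ``dimension count'' silently assumes that the forms with \emph{all} Fourier coefficients rational constitute a $\Q$-structure of $\ML$; that is the nontrivial content here, not a formal consequence of $\MLQ\otimes_{\Q}\C=\ML$.

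The paper closes this gap with a separate lemma whose proof is the missing ingredient: for $k<0$ and $f\in M^!_k(\rho_A)$ with rational principal part, multiply by $\Delta^d$ with $d\gg0$ to land in the cusp form space $S_{k+12d}(\rho_A)$, where McGraw's theorem provides a basis $f_1,\dots,f_N$ with integral Fourier coefficients. Because $k<0$, a cusp form of weight $k+12d$ is determined by its coefficients in degree $<d$ (otherwise dividing by $\Delta^d$ would produce a nonzero holomorphic form of negative weight), and the coefficients of $\Delta^d f$ in degree $<d$ are rational, since they involve only the principal part of $f$ and the integral coefficients of $\Delta^d$. Solving the resulting linear system with rational data shows that the coordinates of $\Delta^d f$ in McGraw's basis are rational, whence $f=\Delta^{-d}\sum_i a_i f_i$ has rational coefficients throughout. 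You need either to supply this argument or to cite the full-expansion rationality as a known fact; as written, the key step of your proof is absent.
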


This follows from the explicit calculation \eqref{eqn: explicit coefficient} of 
the Fourier coefficients of $f_{1}\ast f_{2}$, 
the fact that the $\Theta$-series $\Theta_{K^+}(\tau)$ has integral Fourier coefficients, 
and the following well-known fact. 

\begin{lemma}
Let $A$ be a finite quadratic module and $k<0$. 
If $f\in M^!_k(\rho_A)$ has rational principal part, 
every Fourier coefficient of $f$ is also rational. 
\end{lemma}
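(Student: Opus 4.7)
The plan is to combine McGraw's rationality theorem, already cited just before the lemma, with the injectivity of the principal part map for weight $k<0$. The structure is standard: one exhibits a $\mathbb{Q}$-basis of $M^!_k(\rho_A)$ whose principal parts are $\mathbb{Q}$-linearly independent, and then argues that any element with rational principal part must lie in the $\mathbb{Q}$-span.

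First I would recall the principal part map
\begin{equation*}
\pi \colon M^!_k(\rho_A) \to \bigoplus_{\lambda, n<0} \mathbb{C} \cdot q^n \mathbf{e}_{\lambda},
\end{equation*}
which sends $f$ to its principal part. The discussion in \S\ref{ssec: modular form} records that for $k<0$ this map is injective. Next, by McGraw's theorem there is a $\mathbb{Q}$-subspace $M^!_k(\rho_A)_{\mathbb{Q}} \subset M^!_k(\rho_A)$ consisting of forms all of whose Fourier coefficients are rational, such that $M^!_k(\rho_A)_{\mathbb{Q}} \otimes_{\mathbb{Q}} \mathbb{C} = M^!_k(\rho_A)$. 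Obviously $\pi$ restricts to a $\mathbb{Q}$-linear map into the subspace of principal parts with rational coefficients.

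Then I would pick a $\mathbb{Q}$-basis $\{f_i\}_{i\in J}$ of $M^!_k(\rho_A)_{\mathbb{Q}}$; by injectivity of $\pi$, the principal parts $\{\pi(f_i)\}$ are $\mathbb{Q}$-linearly independent, and after base change to $\mathbb{C}$ they remain $\mathbb{C}$-linearly independent. Given an arbitrary $f \in M^!_k(\rho_A)$ with rational principal part, write $f = \sum c_i f_i$ with $c_i \in \mathbb{C}$ (finite sum). Applying $\pi$ yields $\pi(f) = \sum c_i \pi(f_i)$, an equation whose left-hand side has rational coefficients and whose right-hand side involves the $\mathbb{Q}$-linearly independent vectors $\pi(f_i)$. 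Solving the resulting linear system over $\mathbb{Q}$ shows $c_i \in \mathbb{Q}$, hence $f \in M^!_k(\rho_A)_{\mathbb{Q}}$ and all Fourier coefficients of $f$ are rational.

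There is essentially no obstacle beyond correctly invoking McGraw; the only point that requires a line of care is the descent of the linear system, but since a basis remains a basis after any flat extension, the $c_i$ computed in $\mathbb{C}$ automatically agree with the unique $\mathbb{Q}$-solution.
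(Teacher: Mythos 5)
Your overall strategy --- produce a $\mathbb{Q}$-structure of the space with rational Fourier coefficients, use injectivity of the principal part map for $k<0$, and solve the resulting linear system over $\mathbb{Q}$ --- is sound, and the final linear-algebra step is carried out correctly. The gap is the sentence ``by McGraw's theorem there is a $\mathbb{Q}$-subspace $M^!_k(\rho_A)_{\mathbb{Q}}\subset M^!_k(\rho_A)$ consisting of forms all of whose Fourier coefficients are rational, such that $M^!_k(\rho_A)_{\mathbb{Q}}\otimes_{\mathbb{Q}}\mathbb{C}=M^!_k(\rho_A)$.'' McGraw's theorem concerns the finite-dimensional spaces of \emph{holomorphic} modular forms and cusp forms of type $\rho_A$; it says nothing directly about the infinite-dimensional space of weakly holomorphic forms. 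The existence of a spanning set of $M^!_k(\rho_A)$ all of whose Fourier coefficients (not just principal parts) are rational is a statement of essentially the same depth as the lemma you are proving, so as written you have pushed the whole content of the lemma into an over-strong citation.

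The missing reduction is exactly what the paper's proof supplies: multiply $f$ by $\Delta^d$ with $d\gg 0$ so that $\Delta^d f$ lies in the cusp form space $S_{k+12d}(\rho_A)$, where McGraw literally applies and yields a basis $f_1,\dots,f_N$ with integral coefficients. Since $\Delta^{\pm 1}$ has integral coefficients, the hypothesis on the principal part of $f$ forces the coefficients of $\Delta^d f$ in degrees $<d$ to be rational; and because $k<0$, a cusp form of weight $k+12d$ is determined by those coefficients (if they vanish, dividing by $\Delta^d$ gives a holomorphic form of negative weight, hence zero). Your linear system argument, run in this finite-dimensional space, then gives $a_i\in\mathbb{Q}$ in $\Delta^d f=\sum_i a_i f_i$, and $f=\sum_i a_i\,\Delta^{-d}f_i$ has rational coefficients. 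If you insert this $\Delta$-power reduction to justify your $\mathbb{Q}$-structure on $M^!_k(\rho_A)$, your argument closes up and becomes, in substance, the paper's proof.
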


\begin{proof}
We supplement a proof for the convenience of the reader. 
Let $\Delta(\tau)$ be the $\Delta$-function. 
When $d\gg 0$, the product $\Delta^{d}f$ vanishes at the cusp, namely $\Delta^{d}f \in S_{k+12d}(\rho_A)$. 
By McGraw \cite{Mc}, the ${\C}$-linear space $S_{k+12d}(\rho_A)$ has a basis $f_1, \cdots, f_N$ with integral Fourier coefficients. 
We write $\Delta^{d} f=\sum_{i}a_if_i$ with $a_i\in {\C}$. 
Since $\Delta(\tau)$ has integral coefficients, 
the coefficients of $\Delta^{d}f$ in degree $<d$ are rational by the assumption on $f$. 
Since $k<0$, cusp forms in $S_{k+12d}(\rho_A)$ are determined by the coefficients in degree $<d$, 
so this implies that $a_i\in {\Q}$ for every $i$ by a standard argument. 
Since $\Delta^{-1}(\tau)$ has integral coefficients too, 
we find that $f=\sum_{i}a_i(\Delta^{-d}f_i)$ has rational coefficients. 
\end{proof}

\begin{remark}
Proposition \ref{prop: rational part} also holds in the case $\sigma(L)=0$ 
if we include the constant term $\sum_{\lambda}c_{\lambda}(0)\mathbf{e}_{\lambda}$ into the principal part. 
\end{remark}

Let $M^!(L)_{{\Z}}' \subset {\MLZ}$ be the group of modular forms $f$ 
whose all Fourier coefficients are integer. 
By the same reason as for Proposition \ref{prop: rational part}, 
we have 

\begin{proposition}
$M^!(L)_{{\Z}}'$ is closed under $\ast$ and hence forms a subring. 
\end{proposition}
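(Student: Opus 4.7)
The plan is to verify integrality of every Fourier coefficient of $f_1 \ast f_2$ by direct inspection of the explicit formula \eqref{eqn: explicit coefficient}, since $f_1 \ast f_2 \in {\ML}$ is already established in \S\ref{ssec: theta product}. Thus the only content to check is that $c_\lambda(n) \in \Z$ for all $\lambda \in A_L$ and all admissible $n$ whenever $f_1, f_2 \in M^!(L)_{\Z}'$.

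First I would recall from \eqref{eqn: explicit coefficient} that
\[
c_\lambda(n) = \sum_{m+l+k=n}\sum_{\mu\in J^{\perp}} c^{1}_{\mu}(m) \cdot c^{K}_{p(\mu)}(l) \cdot c^{2}_{\lambda}(k).
\]
By hypothesis $c^1_\mu(m), c^2_\lambda(k) \in \Z$, while the theta coefficients $c^K_{p(\mu)}(l)$ are nonnegative integers, as they count lattice vectors of prescribed length in $K^+$. So each summand lies in $\Z$, and it suffices to see that the sum is finite. The inner sum is finite because $J^\perp \subset A_L$ is finite. For the outer sum, $c^K_\nu(l) = 0$ unless $l \geq 0$, since $\Theta_{K^+}$ is a holomorphic theta series of a positive-definite lattice, and $c^i_\mu(m) = 0$ for $m < -d_i$ where $d_i$ denotes the degree of the principal part of $f_i$. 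Combining $l \geq 0$ with $m + l + k = n$ and these lower bounds forces $-d_1 \leq m \leq n + d_2$ and $-d_2 \leq k \leq n + d_1$, so only finitely many triples $(m, l, k)$ contribute.

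The main ``obstacle'' is really only this bookkeeping, and even that is mild because the formula \eqref{eqn: explicit coefficient} already packages $f_1 \ast f_2$ as a convolution of three integer-coefficient objects. The argument parallels that of Proposition \ref{prop: rational part} with $\Q$ replaced by $\Z$, and is in fact simpler: since \emph{all} Fourier coefficients of $f_1, f_2$ are assumed integral from the outset, one does not need to invoke the rationality lemma that intervened in the proof of Proposition \ref{prop: rational part}, and integrality of every Fourier coefficient of $f_1 \ast f_2$ falls out directly from \eqref{eqn: explicit coefficient}.
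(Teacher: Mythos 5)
Your proof is correct and follows essentially the same route as the paper, which likewise deduces the claim from the explicit coefficient formula \eqref{eqn: explicit coefficient} together with the integrality of the theta coefficients; your observation that the rationality lemma used for Proposition \ref{prop: rational part} is not needed here is exactly why the paper dispatches this case with "by the same reason." The finiteness bookkeeping you supply is a correct (and welcome) elaboration of what the paper leaves implicit.
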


In some cases, $M^!(L)_{{\Z}}' = {\MLZ}$ holds. 
This is the case when  
\begin{itemize}
\item $L=pU\oplus mE_8$ with $m>0$ (unimodular) 
\item $L=pU\oplus mE_8 \oplus \langle -2 \rangle$ 
\end{itemize}
as can be seen from the constructions of nice basis in \cite{D-J1}, \cite{D-J2} respectively. 
In both cases, we have in fact $c_0(0)\in 2{\Z}$ for $f\in {\MLZ}$ by \cite{W-W}. 
Therefore, when $p=2$, ${\MLZ}$ is identified with the multiplicative group of Borcherds products of integral weight. 
This means that we have new "unusual" product $\Psi(f_1\ast f_2)$ of 
two Borcherds products $\Psi(f_1), \Psi(f_2)$ of integral weight as a third one. 
When $L$ is not unimodular, ${\MLZ}$ also forms a nontrivial Lie algebra over ${\Z}$ 
under the Lie bracket \eqref{eqn: Lie bracket}. 
What is geometric interpretation of these new products?


\section{Finite generation}\label{sec: f.g.} 

In this section we prove that ${\ML}$ is finitely generated and 
give estimates, from above and below, 
on the number of generators. 
This section may be read independently of \S \ref{sec: first property}. 

\subsection{Finite generation}\label{ssec: f.g.}

In this subsection we prove 

\begin{proposition}\label{prop: f.g.}
The algebra ${\ML}$ is finitely generated over ${\C}$. 
\end{proposition}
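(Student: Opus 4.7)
The plan is to analyze ${\ML}$ via the short exact sequence
$0 \to \Theta^{\perp} \to {\ML} \xrightarrow{\xi} \xi({\ML}) \to 0$,
where the image is a nonzero principal ideal $(p) \subset M^{!} = \mathbb{C}[j]$ by Proposition \ref{prop: theta kernel basic}; write $d = \deg p$.

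A preliminary lemma: the ideal $(p) \subset \mathbb{C}[j]$ is finitely generated as a $\mathbb{C}$-algebra without unit. When $d \geq 1$ one takes $\{p, jp, \ldots, j^{d-1} p\}$: writing $p = j^d + \sum_{i=0}^{d-1} a_i j^i$, the identity $j^d p = p^2 - \sum_i a_i (j^i p)$ places $j^d p$ in the generated subring, and an induction on $k$ via $j^k p = j^{k-d} \cdot j^d p$ extends this to all $k \geq 0$; when $d = 0$ one uses $\{1, j\}$. Next, I invoke that ${\ML}$ is finitely generated as an $M^{!}$-module, a standard consequence of the noetherianness of $\mathbb{C}[j]$ together with the linear growth $\dim M^{!}(L)_d = |A_L/\pm 1| \cdot d + O(1)$ already exploited in Proposition \ref{prop: unimodular commutative} (multiplication by $j$ is injective on ${\ML}$ and becomes a bijection on the filtration quotients $M^{!}(L)_d/M^{!}(L)_{d-1}$ for $d$ large). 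Fix $M^{!}$-generators $e_1, \ldots, e_n \in M^{!}(L)_{D_0}$ and choose $D \geq D_0 + d - 1$ large enough that $M^{!}(L)_D$ contains $\xi$-preimages of $p, jp, \ldots, j^{d-1}p$.

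I claim that any $\mathbb{C}$-basis of the finite-dimensional space $M^{!}(L)_D$ generates ${\ML}$ under $\ast$. For $g = \sum_i q_i e_i \in {\ML}$ with $q_i \in M^{!}$, decompose $q_i = q_i^{(p)} + q_i^{V}$ via the $\mathbb{C}$-vector space splitting $M^{!} = (p) \oplus V$ with $V = \sum_{i=0}^{d-1} \mathbb{C} j^i$. The $V$-part $\sum_i q_i^{V} e_i$ has pole order $\leq (d-1) + D_0 \leq D$, hence lies in $M^{!}(L)_D$ and is directly a $\mathbb{C}$-linear combination of generators. For the $(p)$-part $\sum_i q_i^{(p)} e_i \in (p) \cdot M^{!}(L)_D$, the associative formula $f_1 \ast \cdots \ast f_k = \xi(f_1) \cdots \xi(f_{k-1}) \cdot f_k$ together with the preliminary lemma—which ensures that $\mathbb{C}$-linear combinations of products $\xi(f_1) \cdots \xi(f_{k-1})$, for $f_j \in M^{!}(L)_D$ and $k \geq 2$, exhaust $(p)$—expresses this as a $\mathbb{C}$-linear combination of length-$\geq 2$ $\ast$-products of generators. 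Hence ${\ML} = M^{!}(L)_D + (p) \cdot M^{!}(L)_D$ is reached by the chosen generating set.

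The main subtlety is the $V$-correction: since $\mathbb{C} + (p) \subsetneq M^{!}$ whenever $d \geq 2$, a generating set drawn only from $M^{!}$-module generators $\{e_i\}$ would not suffice, and one must enlarge the filtration bound from $D_0$ to $D_0 + d - 1$ so that length-$1$ products absorb the $V$-parts of the $M^{!}$-scalars. The finite generation of ${\ML}$ as an $M^{!}$-module is the essential auxiliary input feeding the argument.
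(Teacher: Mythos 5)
Your proof is correct, but it takes a genuinely different route from the paper's. The paper argues directly on leading terms: using Lemma \ref{lem: leading term} it picks one form $f_{0}$ with $\xi(f_{0})=q^{-d_{1}}+o(q^{-d_{1}})$, a finite window $\Lambda_{1}$ of forms $f_{\lambda,m}$ with $-d_{0}-d_{1}\leq m<-d_{0}$, and a basis $\Lambda_{2}$ of $M^{!}(L)_{d_{0}}$, and then shows that the iterated products $f_{0}\ast\cdots\ast f_{0}\ast f_{\lambda,m}$ realize every leading term $q^{n}({\elambda}+\mathbf{e}_{-\lambda})$ with $n<-d_{0}-d_{1}$, so a filtration descent finishes the proof. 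You instead split $M^{!}={\C}[j]$ as $(p)\oplus V$ with $(p)=\xi({\ML})$ (Proposition \ref{prop: theta kernel basic}) and $V=\bigoplus_{i<d}{\C}j^{i}$, apply this to the coefficients of a finite $M^{!}$-module generating set, and reduce to the purely algebraic fact that a nonzero ideal of ${\C}[j]$ is finitely generated as a non-unital ${\C}$-algebra. Two remarks on your auxiliary inputs: the finite generation of ${\ML}$ as an $M^{!}$-module is exactly (the generation half of) the paper's Proposition \ref{prop: free as module}, proved there \emph{after} this proposition but independently of it, so there is no circularity --- though your appeal to ``noetherianness'' is a red herring, and the real argument is the one in your parenthetical (injectivity of multiplication by $j$ on filtration quotients of stabilized dimension $|A_{L}/\pm1|$), which you should make the actual citation. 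Your approach buys a cleaner separation of the module-theoretic and ring-theoretic content, and correctly isolates the one genuine subtlety (module generators alone fail when $\deg p\geq 2$ because the image ideal is non-unital); the paper's approach is more self-contained and, importantly, yields the explicit generator count \eqref{eqn: bound generator} that is exploited in the subsequent discussion of bounds, which your more structural argument does not directly reproduce.
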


For the proof we need the following construction. 

\begin{lemma}\label{lem: leading term}
There exists a natural number $d_{0}$ such that 
for any pair $(\lambda, n)$ with 
$\lambda\in A_{L}$ and $n\in q(\lambda)+{\Z}$, $n<-d_{0}$,  
there exists a modular form $f_{\lambda,n}\in {\ML}$ with Fourier expansion 
$f_{\lambda,n}(\tau)=q^{n}({\elambda}+\mathbf{e}_{-\lambda})+o(q^{n})$. 
\end{lemma}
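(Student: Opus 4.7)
The plan is to invoke the Borcherds duality theorem recalled in \S\ref{ssec: modular form} to construct $f_{\lambda,n}$ by prescribing a suitable principal part. The crucial flexibility is that, to realize a prescribed $q^{n}$-leading term, all coefficients $c_{\mu}(m)$ with $n<m<0$ are available as free parameters, which can be used to absorb the finitely many obstructions coming from cusp forms.

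Set $k = \sigma(L)/2$. Since $S_{2-k}(\rho_L^{\vee})$ is a finite-dimensional ${\C}$-vector space, I first fix $d_0 \geq 1$ so that the Fourier coefficient restriction
\[
S_{2-k}(\rho_L^{\vee}) \longrightarrow \bigoplus_{0 < l \leq d_0} {\C}[A_L], \qquad g \longmapsto \left(a_{\mu}(l)\right)_{\mu, l},
\]
is injective. This $d_0$ depends only on $L$.

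For a pair $(\lambda, n)$ with $n < -d_0$, I then look for a realizable principal part of the shape
\[
P = q^{n}(\mathbf{e}_{\lambda} + \mathbf{e}_{-\lambda}) + \sum_{\substack{n < m < 0 \\ \mu \in A_L,\, m \in q(\mu)+{\Z}}} c_{\mu}(m) q^{m} \mathbf{e}_{\mu}, \qquad c_{-\mu}(m) = c_{\mu}(m),
\]
with the $c_{\mu}(m)$ to be determined. By the Borcherds duality theorem together with the $Z$-invariance $a_{-\mu}(l) = a_{\mu}(l)$ of cusp form coefficients, $P$ is the principal part of some element of $M^{!}(L)$ exactly when
\[
\sum_{\substack{0 < l < -n \\ \mu \in A_L}} c_{\mu}(-l) a_{\mu}(l) = -2 a_{\lambda}(-n)
\]
holds for every $g = \sum a_{\mu}(l) q^{l} \mathbf{e}_{\mu}^{\vee}$ in a basis of $S_{2-k}(\rho_L^{\vee})$. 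This is a finite inhomogeneous linear system in the free variables $c_{\mu}(-l)$, and the task reduces to showing that the associated linear map $\Psi$ from the parameter space to ${\C}^{\dim S_{2-k}(\rho_L^{\vee})}$ is surjective.

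The crux, and the step I expect to be the main obstacle, is establishing this surjectivity. I would pass to the dual map: a cusp form $g$ lies in its kernel iff all Fourier coefficients $a_{\mu}(l)$ with $0 < l \leq -n-1$ vanish. Since $-n - 1 \geq d_0$ by hypothesis, the choice of $d_0$ forces $g = 0$, so the dual map is injective and $\Psi$ is surjective. Solving the resulting consistent system produces a valid principal part $P$, and Borcherds duality then delivers an $f_{\lambda,n} \in M^{!}(L)$ realizing it.
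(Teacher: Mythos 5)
Your proposal is correct and runs on the same engine as the paper's proof --- the Borcherds duality theorem plus finite-dimensional linear algebra --- but the mechanism is genuinely different. The paper works with the whole filtration $M^{!}(L)_{d}\subset V_{d}$: it cites \cite{Bo00a} for the surjectivity of $V_{d}\to S^{\vee}$ when $d\gg 0$, deduces from the resulting dimension formula that $\dim M^{!}(L)_{d+1}-\dim M^{!}(L)_{d}=|A_{L}/\pm 1|$, and concludes that the top-coefficient map $\rho_{d}\colon M^{!}(L)_{d+1}\to {\C}(A_{L}/\pm 1)$ must be surjective, so $f_{\lambda,n}$ is any preimage of $\mathbf{e}_{\lambda}+\mathbf{e}_{-\lambda}$. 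You instead fix the leading term, treat the coefficients in degrees strictly between $n$ and $0$ as unknowns, and solve the resulting inhomogeneous system in $S^{\vee}$ by showing the obstruction map $\Psi$ is surjective via injectivity of its dual. Note that your injectivity statement (a cusp form vanishing to order $d_{0}$ is zero, by finite-dimensionality of $S$) is precisely the dual formulation of the surjectivity that the paper imports from \cite{Bo00a}; so your argument is self-contained where the paper's is a citation, while the paper's dimension count produces structural byproducts (the stabilization of $\dim W_{d}$) that it reuses later for Proposition \ref{prop: free as module} and the bound \eqref{eqn: d_0 bound}. One harmless slip: since $n\in q(\lambda)+{\Z}$ is rational, the hypothesis $n<-d_{0}$ does not give $-n-1\geq d_{0}$; but what your kernel computation actually needs is only that every $l$ with $0<l\leq d_{0}$ satisfies $l<-n$, which does follow from $-n>d_{0}$, so the conclusion stands.
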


\begin{proof}
For simplicity we assume $\sigma(L)<0$; 
the case $\sigma(L)=0$ can be dealt with similarly. 
For each natural number $d$ we let $V_{d}$ be 
the space of ${\C}A_{L}$-valued polynomials of the form 
\begin{equation}\label{eqn: principal part}
\sum_{\lambda\in A_{L}} 
\sum_{\substack{-d\leq m <0 \\ m\in q(\lambda)+{\Z}}} 
c_{\lambda}(m)q^{m}{\elambda}, \qquad 
c_{\lambda}(m) = c_{-\lambda}(m).  
\end{equation}
Then $\dim V_{d} = |A_{L}/\pm 1|\cdot d$. 
The filter $M^{!}(L)_{d}$ of ${\ML}$ is canonically embedded in $V_{d}$ 
by associating the principal parts. 
Let $S=S_{2-\sigma(L)/2}(\rho_{L}^{\vee})$ 
be the space of cusp forms of weight $2-\sigma(L)/2$ and type $\rho_{L}^{\vee}$. 
By the Borcherds duality theorem, 
we have the exact sequence 
\begin{equation*}
0 \to M^{!}(L)_{d}  \to V_{d}\to S^{\vee}. 
\end{equation*}
When $d\gg 0$, 
$V_{d}\to S^{\vee}$ is surjective (\cite{Bo00a}), 
and hence 
\begin{equation*}
\dim M^{!}(L)_{d} = |A_L/\pm 1|\cdot d - \dim S. 
\end{equation*}
In particular, we find that 
\begin{equation*}
\dim M^{!}(L)_{d+1} - \dim M^{!}(L)_{d} = |A_{L}/\pm 1|. 
\end{equation*}
On the other hand, 
$M^{!}(L)_{d}$ as a subspace of $M^{!}(L)_{d+1}$ 
is the kernel of the map 
$\rho_{d}\colon M^{!}(L)_{d+1} \to {\C}(A_{L}/\pm 1)$ 
that associates coefficients of the principal part in degree $\in [-d-1, -d)$. 
Therefore $\rho_{d}$ must be surjective when $d\gg 0$. 
The form $f_{\lambda,n}$ as desired can be obtained as 
$\rho_{d}^{-1}({\elambda}+\mathbf{e}_{-\lambda})$ for suitable $d$. 
\end{proof}

By the proof, $d_{0}$ can be taken to be the minimal degree $d$ where $V_{d}\to S^{\vee}$ is surjective. 
We now prove Proposition \ref{prop: f.g.}. 

\begin{proof}[(Proof of Proposition \ref{prop: f.g.})]
We first define a set of generators. 
First we take $f_{0}\in{\ML}$ whose Fourier expansion is of the form 
$q^{-d_{1}}\mathbf{e}_{0}+o(q^{-d_{1}})$ for some natural number $d_{1}$. 
Next, 
letting $d_{0}$ be as in Lemma \ref{lem: leading term}, 
we put  
\begin{equation*}
\Lambda_{1} = 
\{ \: f_{\lambda,m} \: | \: \lambda\in A_{L}/\pm 1, \: m\in q(\lambda)+{\Z}, \: -d_{0}-d_{1} \leq m < -d_{0} \: \}. 
\end{equation*}
Then we take a basis of $M^{!}(L)_{d_{0}}$ and denote it by $\Lambda_{2}$. 
We shall show that $f_{0}$, $\Lambda_{1}$ and $\Lambda_{2}$ 
generate ${\ML}$ as a ${\C}$-algebra. 

By definition $M^{!}(L)_{d_{0}+d_{1}}$ is generated by 
$\Lambda_{1}\cup \Lambda_{2}$ as a ${\C}$-linear space. 
The quotient 
$M^!(L)/M^!(L)_{d_0+d_1}$ 
is generated as a ${\C}$-linear space 
by any set of modular forms whose Fourier expansion is of the form 
$q^{n}({\elambda}+\mathbf{e}_{-\lambda})+o(q^{n})$ 
where $\lambda$ varies over $A_{L}/\pm 1$ and 
$n$ varies over  $q(\lambda)+{\Z}$ with $n<-d_{0}-d_{1}$. 
Therefore it suffices to show that 
we can construct such a modular form as a product of $f_{0}$ and elements of $\Lambda_{1}$. 
Since 
$f_{0}(\tau){\pushLK}=q^{-d_1}\mathbf{e}_{0}+o(q^{-d_1})$ 
and 
${\ThetaK}(\tau)=\mathbf{e}_{0}^{\vee}+o(1)$, 
we have 
$\xi(f_0)=q^{-d_1}+o(q^{-d_1})$. 
We take 
$m\equiv n$ modulo $d_1$ from 
$-d_0-d_1 \leq m < -d_0$ 
and put 
$r=(m-n)/d_1\in {\N}$. 
Then 
\begin{eqnarray*}
& & 
f_{0} \ast  \cdots \ast  f_{0} \ast  f_{\lambda,m} \qquad (f_{0} \: \: r \: \textrm{times}) \\ 
& = & 
(q^{-d_1}+o(q^{-d_1}))^{r} (q^{m}({\elambda}+\mathbf{e}_{-\lambda})+o(q^{m})) \\ 
& = & 
q^{n}({\elambda}+\mathbf{e}_{-\lambda}) + o(q^{n}). 
\end{eqnarray*}
This gives a desired modular form. 
\end{proof}

\begin{remark}
The rational part ${\MLQ}$ is also finitely generated as a ${\Q}$-algebra. 
The same proof works if we use the ${\Q}$-structure of $S$ given by McGraw's theorem \cite{Mc}. 
\end{remark}

\subsection{Bounds on the number of generators}

In this subsection we study upper and lower bounds on the minimal number of generators of ${\ML}$. 
We first determine the structure of ${\ML}$ as a $M^{!}$-module. 

\begin{proposition}\label{prop: free as module}
${\ML}\simeq (M^{!})^{\oplus (A_{L}/\pm1)}$ 
as a $M^{!}$-module. 
\end{proposition}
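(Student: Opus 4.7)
The plan is to show that ${\ML}$ is a finitely generated torsion-free module over the PID $M^{!}={\C}[j]$, so that by the structure theorem for modules over a PID it is free, and then to identify its rank as $|A_{L}/\pm 1|$ via a dimension-growth comparison.

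Torsion-freeness is immediate because ${\ML}$ embeds into a space of vector-valued Laurent series on which multiplication by any nonzero element of $M^{!}$ is injective. The content is finite generation.

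For finite generation over $M^{!}$ I would use Lemma~\ref{lem: leading term}. For each orbit $\bar\lambda\in A_{L}/\pm 1$, pick the unique $n_{\bar\lambda}\in q(\lambda)+{\Z}$ in the narrow window $[-d_{0}-1,-d_{0})$, and let $g_{\bar\lambda}\in{\ML}$ be a modular form with Fourier expansion $q^{n_{\bar\lambda}}(\elambda+{\mathbf e}_{-\lambda})+o(q^{n_{\bar\lambda}})$. Together with any ${\C}$-basis of the finite-dimensional space $M^{!}(L)_{d_{0}}$, the forms $\{g_{\bar\lambda}\}$ will generate ${\ML}$ as an $M^{!}$-module. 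To see this, given $f\in M^{!}(L)_{d}$ with $d>d_{0}$, subtract a suitable linear combination of the $j^{\,d-d_{0}-1}g_{\bar\lambda}$ to kill the Fourier coefficients of $f$ at the exponents in $[-d,-d+1)$ (one such exponent per $\bar\lambda$), reducing $f$ to $M^{!}(L)_{d-1}$. The relevant linear system is solvable because the matrix of leading coefficients is lower triangular with $1$'s on the diagonal when the $\bar\lambda$'s are ordered so that $n_{\bar\lambda}$ is weakly increasing: for $\bar\mu\ne\bar\lambda$ the $\elambda$-component of $g_{\bar\mu}$ has lowest $q$-exponent strictly greater than $n_{\bar\mu}$, and by the narrow-window choice its coefficient at $q^{n_{\bar\lambda}}$ can be nonzero only when $n_{\bar\lambda}>n_{\bar\mu}$. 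Downward induction on $d$ concludes.

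With finite generation and torsion-freeness in hand, ${\ML}\cong (M^{!})^{\oplus r}$ for some integer $r\geq 1$ by the structure theorem. Choosing any $M^{!}$-basis $h_{1},\ldots,h_{r}$ with $h_{i}\in M^{!}(L)_{d_{i}}$, the induced filtration gives $\dim_{{\C}}M^{!}(L)_{d}=\sum_{i}(d-d_{i}+1)=r\cdot d+O(1)$ for large $d$. Comparing with the growth $\dim_{{\C}}M^{!}(L)_{d}=|A_{L}/\pm 1|\cdot d+O(1)$ coming from the Borcherds duality theorem (cf.\ the proof of Lemma~\ref{lem: leading term}) identifies $r=|A_{L}/\pm 1|$.

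The main obstacle will be the leading-coefficient-matrix step in establishing finite generation. The Fourier exponents in ${\ML}$ live in the $\lambda$-dependent cosets $q(\lambda)+{\Z}$ and do not align with the integer-indexed degree filtration on ${\ML}$ and on $M^{!}$. The uniform narrow-window choice of the $n_{\bar\lambda}$ is the crucial device that enforces the lower-triangular structure and makes the induction step uniform across degrees; without it, contributions from different $\bar\mu$'s could mix at the leading exponent and destroy invertibility of the subtraction matrix.
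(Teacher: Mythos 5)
Your route is genuinely different from the paper's. The paper never invokes the structure theorem over a PID: it directly builds a free basis adapted to the degree filtration, by observing that the images of the graded pieces $W_{d}=M^!(L)_{d+1}/M^!(L)_{d}$ inside ${\C}(A_{L}/\pm 1)$ form an increasing chain of subspaces, compatible with multiplication by $j$, that stabilizes to all of ${\C}(A_{L}/\pm 1)$; lifting a basis of each successive quotient produces exactly $|A_{L}/\pm 1|$ generators, and generation, freeness and the rank all come out of one leading-coefficient (triangularity) argument. Your torsion-freeness remark is fine, and your finite-generation-over-$M^{!}$ argument via the narrow window $[-d_{0}-1,-d_{0})$ is correct — it is essentially the same mechanism as the paper's proof of Proposition \ref{prop: f.g.} — so the structure theorem does give ${\ML}\simeq (M^{!})^{\oplus r}$ for some $r$.

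The gap is in the identification $r=|A_{L}/\pm 1|$. For an \emph{arbitrary} $M^{!}$-basis $h_{1},\dots,h_{r}$ with $h_{i}\in M^{!}(L)_{d_{i}}$, the asserted equality $\dim M^{!}(L)_{d}=\sum_{i}(d-d_{i}+1)$ is unjustified and false in general: one only has the inclusion of the span of the $P_{i}(j)h_{i}$ with $\deg P_{i}\leq d-d_{i}$ into $M^{!}(L)_{d}$, hence only the inequality $\dim M^{!}(L)_{d}\geq r\,d+O(1)$, because an element of $M^{!}(L)_{d}$ may require coordinates of degree larger than $d-d_{i}$ when leading terms cancel (replace $h_{2}$ by $h_{2}+j^{100}h_{1}$ in a basis to see this). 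A basis need not be adapted to the principal-part filtration — producing an adapted one is precisely the content of the paper's construction. Comparing with $\dim M^{!}(L)_{d}=|A_{L}/\pm 1|\cdot d+O(1)$ therefore yields only $r\leq |A_{L}/\pm 1|$; the reverse inequality still has to be proved, i.e.\ you must exhibit $|A_{L}/\pm 1|$ elements of ${\ML}$ that are $M^{!}$-linearly independent. Your own narrow-window forms $g_{\bar\lambda}$ do this: in a relation $\sum_{\bar\lambda}P_{\bar\lambda}(j)\,g_{\bar\lambda}=0$ with $m=\max\deg P_{\bar\lambda}$, examining the coefficient of $q^{n_{\bar\mu}-m}(\emu+\mathbf{e}_{-\mu})$ for $\bar\mu$ attaining the maximum with $n_{\bar\mu}$ minimal, the same triangularity forces the top coefficient of $P_{\bar\mu}$ to vanish, whence all $P_{\bar\lambda}=0$ and $r\geq |A_{L}/\pm 1|$. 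With that supplement your proof closes.
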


\begin{proof}
Let $W_{d}=M^!(L)_{d+1}/M^!(L)_{d}$. 
Taking coefficients of the principal part in degree $\in [-d-1, -d)$  
defines an embedding 
$\rho_{d}\colon W_{d}\hookrightarrow {\C}(A_{L}/\pm 1)$. 
On the other hand, multiplication by the $j$-function $j(\tau)=q^{-1}+O(1)\in M^{!}$ 
defines an injective map $W_{d}\hookrightarrow W_{d+1}$ 
which is compatible with $\rho_{d}$ and $\rho_{d+1}$. 
We thus have the filtration $(W_{d})_{d}$ of the space ${\C}(A_{L}/\pm 1)$ 
which stabilizes to ${\C}(A_{L}/\pm 1)$ in $d \gg 0$. 
Let 
\begin{equation*}
W_{0}=W_{d_{1}} \subsetneq W_{d_{2}} \subsetneq \cdots \subsetneq W_{d_{N}} = {\C}(A_{L}/\pm 1) 
\end{equation*}
be the reduced form of this filtration, 
namely $W_{d}=W_{d_{i}}$ in $d_{i}\leq d < d_{i+1}$. 

We take modular forms 
$\{ f_{ij} \}_{i,j}$, $1 \leq i \leq N$, 
such that 
$f_{ij}\in M^!(L)_{d_{i}}$ and 
$\{ \rho_{d_{i}}(f_{ij}) \}_{j}$ form a basis of $W_{d_{i}}/W_{d_{i-1}}$. 
Then 
$\{ \rho_{d_{i}}(f_{ij}) \}_{i,j}$ form a basis of ${\C}(A_{L}/\pm 1)$. 
We show that 
$\{ f_{\alpha} \}_{\alpha}$ 
freely generate ${\ML}$ as $M^!$-module. 
Since we have 
\begin{equation*}
M^!(L)_{d+1} = 
\langle M^!(L)_{d}, W_{d} \rangle = 
\langle M^!(L)_{d}, j\cdot M^!(L)_{d}, W_{d}/W_{d-1} \rangle, 
\end{equation*}   
induction on $d$ tells us that 
$M^!(L)_{d}\subset \sum_{\alpha} M^!\cdot f_{\alpha}$ 
for every $d$. 
Thus $\{ f_{\alpha} \}_{\alpha}$ generate ${\ML}$. 
If there was a relation 
\begin{equation}\label{eqn: freeness}
\sum_{\alpha} P_{\alpha}(j) f_{\alpha} = 0, \qquad P_{\alpha}\in {\C}[x], 
\end{equation}
then we would obtain a nontrivial ${\C}$-linear relation between 
$\{ \rho_{\alpha}(f_{\alpha}) \}_{\alpha}$ in ${\C}(A_L/\pm 1)$ 
by looking at the coefficients of the principal part of \eqref{eqn: freeness} in highest degrees. 
Thus $\{ f_{\alpha} \}$ are free generators. 
\end{proof}

This gives a lower bound of the number of generators of ${\ML}$ as algebra, 
which implies the following. 

\begin{proposition}\label{prop: finiteness}
Let $p\leq q$ be fixed. 
Let $N$ be a fixed natural number. 
Then up to isometry 
there are only finitely many pairs $(L, I)$ 
of an even lattice $L$ of signature $(p, q)$ and Witt index $p$ 
and a maximal isotropic sublattice $I\subset L$ such that 
the algebra $M^!(L, I)$ can be generated by at most $N$ elements. 
\end{proposition}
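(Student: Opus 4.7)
The plan is to translate a bound on the number of algebra generators into a bound on $|A_L|$, after which classical arithmetic finiteness concludes the argument. The central observation is that generation as a $\mathbb{C}$-algebra with respect to $\ast$ automatically forces generation as an $M^{!}$-module by the same elements, at which point Proposition \ref{prop: free as module} takes over.

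Concretely, I would argue as follows. Suppose $f_1,\ldots,f_N$ generate $M^{!}(L,I)$ as a $\mathbb{C}$-algebra and consider the sub-$M^{!}$-module $V=\sum_{i=1}^N M^{!}\cdot f_i \subseteq M^{!}(L)$. For any $h_1,h_2\in V$ the defining identity $h_1\ast h_2=\xi(h_1)\cdot h_2$, together with $\xi(h_1)\in M^{!}$, gives $h_1\ast h_2\in M^{!}\cdot V=V$. Hence $V$ is a sub-$\mathbb{C}$-algebra of $M^{!}(L)$ containing the generators, so $V=M^{!}(L)$. By Proposition \ref{prop: free as module}, $M^{!}(L)$ is free of rank $|A_L/\pm 1|$ over $M^{!}=\mathbb{C}[j]$; tensoring with the fraction field $\mathbb{C}(j)$ yields the standard rank inequality $|A_L/\pm 1|\leq N$, and hence $|A_L|\leq 2N$.

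The remainder invokes two classical finiteness results. For fixed signature $(p,q)$ and bounded discriminant $|A_L|\leq 2N$, there are only finitely many isometry classes of even lattices $L$; this is a consequence of the finiteness of the genus together with the finiteness of genera of bounded determinant. For each such $L$, the orthogonal group $\mathrm{O}(L)$ acts on the set of primitive rank-$p$ isotropic sublattices with only finitely many orbits, by Witt's theorem over $\mathbb{Q}$ combined with the local-global principle of Eichler type for indefinite even lattices. Choosing one representative from each orbit produces finitely many isometry classes of pairs $(L,I)$ meeting the hypothesis.

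The main obstacle is really the first step; once the translation from algebra generation to $M^{!}$-module generation is established, the remainder is a direct appeal to standard lattice-theoretic finiteness. The subtlety there is to exploit the multiplicative identity $h_1\ast h_2=\xi(h_1)\cdot h_2$ rather than try to work around the absence of a unit element: this identity makes it essentially tautological that any sub-$M^{!}$-module containing the generators is already closed under $\ast$, so that algebra generation and $M^{!}$-module generation coincide despite $M^{!}(L)$ not being unital in general.
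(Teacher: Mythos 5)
Your argument is correct and follows essentially the same route as the paper: the identity $f\ast g=\xi(f)\cdot g$ shows that algebra generators also generate $M^{!}(L)$ as an $M^{!}$-module, Proposition \ref{prop: free as module} then gives $N\geq |A_L/\pm 1|>|A_L|/2$, and one concludes by finiteness of even lattices of fixed signature and bounded discriminant together with the finiteness of the dependence on $I$ already noted in \S\ref{ssec: theta product}. Your elaboration of the first step (the sub-$M^{!}$-module generated by the $f_i$ is automatically closed under $\ast$) is exactly the content of the paper's one-line remark.
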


\begin{proof}
In \S \ref{ssec: theta product}, we observed that 
the dependence on $I$ is finite for a fixed lattice $L$. 
Hence it is sufficient to prove finiteness of lattices $L$. 
Since $f\ast g=\xi(f)\cdot g$, 
generators of ${\ML}$ as algebra 
also serve as generators as $M^!$-module. 
By Proposition \ref{prop: free as module}, 
we obtain the bound 
\begin{equation*}
N \: \geq \: |A_{L}/\pm 1| \: > \: |A_{L}|/2. 
\end{equation*}
Then our assertion follows from 
finiteness of even lattices of fixed signature and bounded discriminant. 
\end{proof}

Proposition \ref{prop: finiteness} is a consequence of the structure of ${\ML}$ as a $M^{!}$-module. 
It would be a natural problem whether the finiteness still holds even if we let $q$ vary with $p$ fixed. 
By Proposition \ref{prop: free as module}, 
the same statement is not true for generators \textit{as $M^!$-module}. 
(Take direct sum with the unimodular lattices $mE_{8}$.) 
So this could be one of touchstones for the theory of algebra structure on ${\ML}$.

Next we study upper bound of the number of generators of ${\ML}$ as algebra. 
By the proof of Proposition \ref{prop: f.g.}, 
we have the upper bound  
\begin{equation}\label{eqn: bound generator}
1 + d_{1}\cdot |A_{L}/\pm 1| + \dim M^!(L)_{d_{0}} 
\: \leq \: 
1+(d_0+d_1)\cdot |A_{L}/\pm 1|,   
\end{equation}
where $d_0$ and $d_1$ are as defined there. 
Clearly $d_1 \leq d_0+1$. 
We have the following upper bound of $d_0$. 
Let $V_{d}$ and $S$ be as in the proof of Lemma \ref{lem: leading term}. 
Recall that $d_0$ does not exceed the minimal degree where $V_d\to S^{\vee}$ is surjective. 

\begin{proposition}
The dimension $\dim {\rm Im}(V_d\to S^{\vee})$ 
is strictly increasing with respect to $d$ 
until $V_d\to S^{\vee}$ gets surjective. 
In particular, we have 
\begin{equation}\label{eqn: d_0 bound}
d_{0} \: \leq \: \dim S - d_{2}(|A_{L}/\pm 1|-1), 
\end{equation}
where 
$d_{2}=-[\sigma(L)/24+1]$ 
is the largest integer with $d_{2}<|\sigma(L)|/24$. 
\end{proposition}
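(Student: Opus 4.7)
The plan is to translate dimensions through the Borcherds duality exact sequence
\begin{equation*}
0 \to M^!(L)_d \to V_d \to S^\vee,
\end{equation*}
which gives $\dim {\rm Im}(V_d \to S^\vee) = d \cdot |A_L/\pm 1| - \dim M^!(L)_d$, and then to analyze the two regimes $d \leq d_2$ and $d > d_2$ separately.

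For the range $d \leq d_2$, I first show $M^!(L)_d = 0$. Given $f \in M^!(L)_d$, the product $\Delta^{d} f$ is holomorphic at the cusp (the pole of order at most $d$ is absorbed by the $d$-th order zero of $\Delta^d$), hence a holomorphic modular form of weight $\sigma(L)/2 + 12d$ and type $\rho_L$. Since $d \leq d_2 < |\sigma(L)|/24$, this weight is strictly negative; as there are no nonzero holomorphic modular forms of negative weight for the unitary representation $\rho_L$ (standard maximum-principle argument applied to $\|F\|^2 ({\rm Im}\,\tau)^w$), we conclude $\Delta^{d} f \equiv 0$, hence $f = 0$. Therefore $\dim {\rm Im}(V_d \to S^\vee) = d \cdot |A_L/\pm 1|$ for all $d \leq d_2$, and in particular $V_{d_2} \to S^\vee$ is injective.

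The heart of the proof is the strict-monotonicity claim. Set
\begin{equation*}
I_d := \ker(S \to V_d^\vee) = \{ g \in S : a_\lambda(m) = 0 \text{ for all } \lambda \text{ and all } 0 < m \leq d \},
\end{equation*}
so that $\dim {\rm Im}(V_d \to S^\vee) = \dim S - \dim I_d$; it then suffices to show $I_{d+1} \subsetneq I_d$ whenever $I_d \neq 0$. Given a nonzero $g \in I_d$ with leading Fourier degree $m_0 > d$, the case $m_0 \leq d+1$ is immediate, so assume $m_0 > d+1$. The key construction is the scalar-shift operator $g \mapsto E_{12} g/\Delta$, which sends $S_{w}(\rho_L^\vee)$ into itself (since $E_{12}/\Delta$ is a weakly holomorphic scalar modular form of weight $0$ with Fourier expansion $q^{-1} + O(1)$), lowers the leading degree by exactly one, and preserves the leading coefficient vector (because $\Delta$ has a simple zero only at the cusp and $E_{12} = 1 + O(q)$). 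Iterating, $g_i := E_{12}^i g/\Delta^i \in S$ is a cusp form as long as $m_0 - i > 0$. Taking $i$ to be the unique integer in $[m_0 - d - 1, m_0 - d)$, one has $m_0 - i \in (d, d+1]$ and $0 \leq i < m_0 - d \leq m_0$, so all intermediate iterates $g_0, \ldots, g_i$ have positive leading degree and are bona fide cusp forms; by construction $g_i \in I_d \setminus I_{d+1}$.

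Combining the two parts, the image dimension is $d_2 \cdot |A_L/\pm 1|$ at $d = d_2$, then grows by at least one at each subsequent step until reaching $\dim S$, yielding
\begin{equation*}
d_0 \leq d_2 + (\dim S - d_2 \cdot |A_L/\pm 1|) = \dim S - d_2(|A_L/\pm 1| - 1).
\end{equation*}
The main technical obstacle is the monotonicity step, namely verifying that the iteration $g \mapsto E_{12} g/\Delta$ stays inside $S$ at every intermediate stage; this reduces to the elementary fact that $\Delta$ vanishes only at the cusp and there to first order, so division by $\Delta$ introduces no spurious poles in the upper half-plane and only trims the order of vanishing at infinity by one per step.
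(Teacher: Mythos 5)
Your proof is correct, but it runs on the opposite side of the Borcherds duality pairing from the paper's argument, so it is worth comparing the two. The paper works on the $M^!(L)$ side: if ${\rm Im}(V_d\to S^{\vee})={\rm Im}(V_{d+1}\to S^{\vee})$, then the graded piece $W_d=M^!(L)_{d+1}/M^!(L)_d$ attains its maximal dimension $|A_L/\pm 1|$, whence (by the injectivity of multiplication by $j$ on the graded pieces, recycled from the proof of Proposition~\ref{prop: free as module}) $\dim W_{d'}=|A_L/\pm 1|$ for all $d'\geq d$, so the image can never grow again; combined with the eventual surjectivity of $V_d\to S^{\vee}$ (quoted from Borcherds), this forces surjectivity already at $d$. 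You instead work on the $S$ side: you identify the codimension of the image with $\dim I_d$, where $I_d$ is the space of cusp forms vanishing to order $>d$ at the cusp, and you constructively trim the order of vanishing of a nonzero $g\in I_d$ by multiplying by powers of $E_{12}/\Delta$ until it lands in $I_d\setminus I_{d+1}$. Both arguments rest on the same mechanism --- multiplication by a weight-$0$ weakly holomorphic scalar form with expansion $q^{-1}+O(1)$ shifts leading Fourier degrees down by one while preserving the leading coefficient --- but applied to dual spaces. Your version is direct rather than by contradiction, and it does not presuppose the eventual surjectivity of $V_d\to S^{\vee}$ (indeed it reproves it); the paper's version has the advantage of reusing the filtration analysis already set up for the $M^!$-module structure of ${\ML}$. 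The deduction of the numerical bound \eqref{eqn: d_0 bound} is the same in both: injectivity of $V_{d_2}\to S^{\vee}$ via the $\Delta^{d}f$ weight argument of Lemma~\ref{lem: sgn bound}, followed by counting at least one unit of growth per step.
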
 

\begin{proof}
For the first assertion, what has to be shown is that 
$V_d\to S^{\vee}$ is surjective whenever 
${\rm Im}(V_d\to S^{\vee}) = {\rm Im}(V_{d+1}\to S^{\vee})$. 
By the Borcherds duality theorem, 
this condition means that 
the codimension of $M^!(L)_d$ in $V_d$ equals to 
the codimension of $M^!(L)_{d+1}$ in $V_{d+1}$. 
If we write $W_d=M^!(L)_{d+1}/M^!(L)_{d}$, 
we find that 
\begin{equation*}
\dim W_d = \dim (V_{d+1}/V_{d}) = |A_{L}/\pm 1|. 
\end{equation*}
As in the proof of Proposition \ref{prop: free as module}, 
this implies 
that 
$\dim W_{d'} = |A_{L}/\pm 1|$ 
for every $d'\geq d$. 

On the other hand, 
if $V_{d}\to S^{\vee}$ was not surjective, 
there must exist $d'\geq d$ such that 
${\rm Im}(V_{d'}\to S^{\vee}) \ne {\rm Im}(V_{d'+1}\to S^{\vee})$. 
By the same argument as above, 
this implies that 
\begin{equation*}
\dim W_{d'} < \dim (V_{d'+1}/V_{d'}) = |A_{L}/\pm 1|, 
\end{equation*}
which is absurd. 

The second assertion follows from Lemma \ref{lem: sgn bound} 
which implies the injectivity of $V_{d_{2}}\to S^{\vee}$. 
\end{proof}

By \eqref{eqn: bound generator} and \eqref{eqn: d_0 bound}, we obtain an upper bound 
for the minimal number of generators in terms of $\dim S$. 
An estimate of $\dim S$ is given in \cite{B-E-F}. 

We also note that 
$d_1 \geq |\sigma(L)|/24$ 
by the following well-known property. 

\begin{lemma}\label{lem: sgn bound}
If $M^!(L)_{d}\ne \{ 0 \}$, then $|\sigma(L)|\leq 24d$. 
\end{lemma}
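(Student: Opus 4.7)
The plan is to use the standard $\Delta$-trick, which reduces the claim to the non-existence of nonzero holomorphic modular forms of negative weight for the Weil representation.

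First I would take a nonzero $f \in M^!(L)_{d}$, so that the Fourier expansion
\[
f(\tau) = \sum_{\lambda\in A_{L}}\sum_{n\in q(\lambda)+{\Z}} c_{\lambda}(n)q^{n}{\elambda}
\]
satisfies $c_{\lambda}(n)=0$ whenever $n<-d$. Let $\Delta(\tau)=q+O(q^{2})$ be the normalized weight $12$ cusp form for $\SL$, and consider
\[
g(\tau) := \Delta(\tau)^{d}\, f(\tau).
\]
Since $\Delta$ is scalar-valued and $\SL$-invariant (with trivial multiplier), multiplication by $\Delta^{d}$ preserves the Weil representation type. Thus $g$ is a modular form of weight $12d+\sigma(L)/2$ and type $\rho_{L}$. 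Because $\Delta^{d}=q^{d}+O(q^{d+1})$ kills the principal part of $f$, the Fourier expansion of $g$ involves only nonnegative powers of $q$; i.e.\ $g$ is holomorphic at the cusp. Since $\Delta\ne 0$ and $f\ne 0$, and holomorphic functions on the upper half plane have no zero divisors, $g$ is nonzero.

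Now I invoke the fact that a nonzero holomorphic modular form of type $\rho_{L}$ necessarily has weight $\geq 0$: using the unitarity of $\rho_{L}$, the function $\mathrm{Im}(\tau)^{12d+\sigma(L)/2}\|g(\tau)\|^{2}$ is $\Mp$-invariant and bounded on a fundamental domain (it decays at the cusp when the weight is negative), whence the maximum principle forces $g\equiv 0$, a contradiction. Therefore $12d+\sigma(L)/2\geq 0$, which gives $|\sigma(L)|=-\sigma(L)\leq 24d$.

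The only conceptually non-routine point is the vanishing of negative-weight holomorphic vector-valued modular forms for $\rho_{L}$, but this is well-known and the author treats the lemma itself as standard. All other steps are formal bookkeeping about the action of $\Delta^{d}$ on principal parts and weights.
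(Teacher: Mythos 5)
Your proof is correct and follows essentially the same route as the paper: multiply $f$ by $\Delta^{d}$ to kill the principal part, observe the result is a nonzero holomorphic modular form of weight $12d+\sigma(L)/2$, and conclude this weight is nonnegative. The only difference is that you spell out the standard nonexistence of negative-weight holomorphic vector-valued forms, which the paper takes for granted.
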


\begin{proof}
If $f\ne 0 \in M^!(L)_{d}$, 
the product $\Delta^{d} f$ with the $\Delta$-function is holomorphic at the cusp, 
so its weight $\sigma(L)/2+12d$ must be nonnegative. 
\end{proof}

We close this subsection with some simple examples. 

\begin{example}
Assume that the obstruction space 
$S_{2-\sigma(L)/2}(\rho_{L}^{\vee})$ 
is trivial. 
(Such lattices $L$ with $p=2$ are classified in \cite{B-E-F}.) 
Then every polynomial as in \eqref{eqn: principal part} 
is the principal part of some modular form in ${\ML}$. 
In this case, using the notation in the proof of Proposition \ref{prop: f.g.}, 
we have 
$d_{0}=0$, 
$d_{1}=1$, 
$\Lambda_{2}=\emptyset$, 
and the modular form $f_{0}$ can be included in $\Lambda_{1}$. 
Therefore ${\ML}$ can be generated by modular forms 
$f_{\lambda}=q^n(\mathbf{e}_{\lambda}+\mathbf{e}_{-\lambda})+O(1)$ 
with $\lambda\in A_L/\pm1$ and $n\in q(\lambda)+{\Z}$, $-1\leq n < 0$. 
The minimal number of generators is thus equal to $|A_{L}/\pm1|$. 
The generator $f_{\lambda}$ with $\lambda\ne 0$ 
is either a left unit element or a left zero divisor 
according to Proposition \ref{prop: LUE} (2). 
\end{example}

\begin{example}\label{ex: generator}
We go back to Example \ref{ex: reflective 1} where $L=pU \oplus \langle -2 \rangle$. 
The algebra ${\ML}$ is generated by the two elements 
$f_{0}=q^{-1}\mathbf{e}_{0}+O(1)$ and 
$f_{1}=q^{-1/4}\mathbf{e}_{1}+O(1)$ 
with the relation 
$f_{1}\ast  f_{1}= 12 f_{1}$ and $f_{1}\ast  f_{0}= 12 f_{0}$.  
Thus the two basic reflective modular forms for $L$ 
give minimal generators of the algebra ${\ML}$.  
\end{example}


\section{Functoriality}\label{sec: functorial}  

In this section we prove that  
$\Theta$-product is functorial with respect to embedding of lattices 
if we use quasi-pullback as morphism. 
The statement is Proposition \ref{prop: functorial main}, 
and the proof is given in \S \ref{ssec: finite pullback} and  \S \ref{ssec: general case}. 
In \S \ref{ssec: functorial push} we also prove functoriality 
with respect to special pushforward. 
Except for Corollary \ref{cor: functorial add consequence}, 
this section may be read independently of \S \ref{sec: first property} and \S \ref{sec: f.g.}.

\subsection{Quasi-pullback}\label{ssec: quasi-pullback} 

Let $L$ be an even lattice of signature $(p, q)$ 
and $L'$ be a sublattice of $L$ of signature $(p, q')$. 
We do not assume that $L'$ is primitive in $L$. 
Following \cite{Ma}, we define a linear map 
$|_{L'} \colon {\ML}\to M^{!}(L')$ as follows. 
Let 
$N=(L')^{\perp}\cap L$, 
which is a negative-definite lattice. 
We write $N^{+}=N(-1)$. 
The lattice $L'\oplus N$ is of finite index in $L$. 
Let $f\in {\ML}$. 
We first take the pullback 
$f\!\uparrow_{L}^{L'\oplus N}$, 
which is an element of $M^!(L'\oplus N)$. 
Since 
$\rho_{L'\oplus N} = \rho_{L'}\otimes \rho_{N}$, 
we can take contraction of $f\!\uparrow_{L}^{L'\oplus N}$ 
with the $\rho_{N^{+}}$-valued theta series $\Theta_{N^{+}}$ of $N^{+}$. 
This produces a $\rho_{L'}$-valued weakly holomorphic modular form of weight $\sigma(L')/2$, 
which we denote by 
\begin{equation}\label{eqn: define quasi-pullback}
f|_{L'} = \langle f\!\uparrow_{L}^{L'\oplus N}, \: \Theta_{N^{+}} \rangle \quad \in M^{!}(L'). 
\end{equation}
We call $f|_{L'}$ the \textit{quasi-pullback} of $f$ to $L'$. 
The map $|_{L'} \colon {\ML}\to M^!(L')$ is $M^!$-linear. 

The geometric significance of this operation comes from Borcherds products as follows. 
Assume that $p=2$ and $f$ has integral principal part, 
and let $\Psi(f)$ be the Borcherds product associated to $f$ 
on the Hermitian symmetric domain $\mathcal{D}_{L}$ for $L$.   
The Hermitian symmetric domain $\mathcal{D}_{L'}$ for $L'$ 
is naturally embedded in $\mathcal{D}_{L}$. 
The quasi-pullback of $\Psi(f)$ from $L$ to $L'$, 
discovered by Borcherds \cite{Bo95}, \cite{B-K-P-SB},  
is defined by 
first dividing $\Psi(f)$ by suitable linear forms to get rid of zeros and poles containing $\mathcal{D}_{L'}$, 
and then restricting the resulting form to $\mathcal{D}_{L'}\subset \mathcal{D}_{L}$. 
It is proved in \cite{Ma} that this quasi-pullback of $\Psi(f)$ 
coincides with the Borcherds product for $f|_{L'}\in M^{!}(L')$ up to constant. 
Thus the operation $|_{L'}$ defined in \eqref{eqn: define quasi-pullback} 
can be thought of as a formal ${\C}$-linear extension of 
the quasi-pullback operation on Borcherds products. 

We can now state the main result of this \S \ref{sec: functorial}. 
We assume that $p\leq q' \leq q$ and both $L$ and $L'$ have Witt index $p$. 

\begin{proposition}\label{prop: functorial main}
Let $L'\subset L$ be as above. 
Let $I$ be a maximal isotropic sublattice of $L$ 
such that $I_{{\Q}}\subset L'_{{\Q}}$. 
We set $I'=I\cap L'$. 
Then we have 
\begin{equation*}\label{eqn: functorial}
(f|_{L'})\ast_{I'}(g|_{L'}) = |I/I'|\cdot (f\ast_{I}g)|_{L'} 
\end{equation*}
for $f, g \in {\ML}$. 
Therefore the map
\begin{equation*} 
|I/I'|^{-1}\cdot |_{L'} : M^!(L, I) \to M^{!}(L', I') 
\end{equation*}
is a ring homomorphism. 
\end{proposition}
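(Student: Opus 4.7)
My plan proceeds in three stages.

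First, by the $M^{!}$-linearity of both $|_{L'}\colon\ML\to M^{!}(L')$ and $\xi_{I}(h) = \langle h\pushLK,\ThetaK\rangle\colon\ML\to M^{!}$, together with $f\ast_{I}g = \xi_{I}(f)\cdot g$, the proposition reduces to the scalar identity
\begin{equation*}
\xi_{I'}(f|_{L'}) \;=\; |I/I'|\cdot \xi_{I}(f) \qquad \text{for every } f\in\ML,
\end{equation*}
because $(f|_{L'})\ast_{I'}(g|_{L'})=\xi_{I'}(f|_{L'})\cdot g|_{L'}$ and $(f\ast_{I}g)|_{L'}=\xi_{I}(f)\cdot g|_{L'}$.

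Second, using the adjointness of $\uparrow$ and $\downarrow$ under the canonical pairing $\rho_{A}\otimes\rho_{A}^{\vee}\to\C$, I would transpose the theta contractions to pair directly with $f$:
\begin{equation*}
\xi_{I}(f)=\bigl\langle f,\,\Theta_{K^{+}}\uparrow^{L}_{L^{\ast}}\bigr\rangle_{L},
\end{equation*}
\begin{equation*}
\xi_{I'}(f|_{L'})=\Bigl\langle f,\,\bigl[\Theta_{N^{+}}\otimes(\Theta_{(K')^{+}}\uparrow^{L'}_{(L')^{\ast}})\bigr]\downarrow^{L'\oplus N}_{L}\Bigr\rangle_{L}.
\end{equation*}
Since the Fourier coefficients of $f$ at each $(\lambda,n)$ can be realized independently (by the Borcherds duality theorem), the scalar identity is equivalent to the $\rho_{L}^{\vee}$-valued theta identity
\begin{equation*}
\bigl[\Theta_{N^{+}}\otimes(\Theta_{(K')^{+}}\uparrow^{L'}_{(L')^{\ast}})\bigr]\downarrow^{L'\oplus N}_{L} \;=\; |I/I'|\cdot \Theta_{K^{+}}\uparrow^{L}_{L^{\ast}}.
\end{equation*}

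Third, I would prove this identity by lattice combinatorics. The crucial geometric input is that $K'\oplus N\hookrightarrow K$ is a finite-index sublattice via $(\bar x,y)\mapsto\overline{x+y}$: the hypothesis $I_{\Q}\subset L'_{\Q}$ combined with $N\perp L'$ forces $N\perp I$, while $|I/I'|\cdot I\subset I'$ forces $(I')^{\perp}\cap L'\subset I^{\perp}\cap L$; injectivity uses $N\cap L'_{\Q}=0$, and the ranks add up to $(q-p)=(q'-p)+(q-q')$. This yields $\Theta_{K^{+}}=(\Theta_{(K')^{+}}\otimes\Theta_{N^{+}})\downarrow^{(K'\oplus N)^{+}}_{K^{+}}$, so both sides of the desired identity are built from the common tensor $\Theta_{(K')^{+}}\otimes\Theta_{N^{+}}$ by different compositions of pullbacks and pushforwards. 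Expanding in the $\mathbf{e}_{\lambda}^{\vee}$-basis of $\rho_{L}^{\vee}$, the identity reduces to the combinatorial statement that the natural map $(\mu',\mu_{N})\mapsto(p'(\mu'),\mu_{N})$ carries the left-hand index set onto the right-hand one with every fiber of cardinality exactly $|I/I'|$. Verifying this fiber count is the main obstacle: it requires careful identification of the two overlattices $L^{\ast}$ and $(L')^{\ast}\oplus N$ of $L'\oplus N$ inside the common space $L'_{\Q}\oplus N_{\Q}$, and the observation that their discrepancy in $A_{L'\oplus N}$ is governed precisely by $I/I'$.
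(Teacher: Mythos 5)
Your first two stages are sound, and the target theta identity you arrive at is correct: it is precisely the adjoint, under the contraction pairing (using that $\uparrow$ and $\downarrow$ are mutually adjoint), of the operator identity that the paper actually proves, namely
\begin{equation*}
\downarrow^{L'}_{K'} \circ \uparrow^{L'}_{L} \;=\; |I/I'|\,\uparrow^{K'}_{K}\circ \downarrow^{L}_{K}
\end{equation*}
on ${\C}A_{L}$, combined with $\Theta_{K^{+}}=(\Theta_{(K')^{+}}\otimes\Theta_{N^{+}})\!\downarrow^{(K'\oplus N)^{+}}_{K^{+}}$ for the split part; your observation that $K'\oplus N$ embeds into $K$ with finite index is also correct and is used in the paper in the same way. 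One small correction: you should not invoke ``independent realizability of Fourier coefficients'' via the Borcherds duality theorem — that theorem says exactly that principal parts are obstructed by cusp forms, so they are \emph{not} independently realizable. Fortunately you only need the implication from the theta identity to the scalar identity, which is plain linearity of the pairing, so the misstatement is harmless.

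The genuine gap is your third stage. The entire mathematical content of the proposition is the surjectivity-with-constant-fiber-cardinality-$|I/I'|$ claim that you state and then explicitly defer as ``the main obstacle.'' In the paper this is Lemma \ref{lem: finite pullback FQM}: for the two isotropic subgroups $I_{1}=L/L'$ and $I_{2}=(I')^{\ast}/I'$ of $A=A_{L'}$ one proves $\downarrow^{A}_{A_{2}}\circ\uparrow^{A}_{A_{1}}=|I_{1}\cap I_{2}|\uparrow^{A_{2}}_{A'}\circ\downarrow^{A_{1}}_{A'}$, where $I_{1}\cap I_{2}=I/I'$ appears as the fiber cardinality of the quotient map $I_{1}\cap I_{2}^{\perp}\to I_{1}'$; establishing this requires setting up the commutative diagram identifying $A'\simeq (I_{2}')^{\perp}\cap A_{1}/I_{2}'\simeq (I_{1}')^{\perp}\cap A_{2}/I_{1}'$ and handling the vanishing case $\lambda\notin(I_{2}')^{\perp}$ separately. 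Your remark that the discrepancy between the overlattices $L^{\ast}$ and $(L')^{\ast}\oplus N$ of $L'\oplus N$ ``is governed precisely by $I/I'$'' is the right intuition but not a proof; note in particular that neither of these overlattices need contain the other (one has $I^{\ast}\subset(I')^{\ast}$ but in general $L\not\subset(L')^{\ast}\oplus N$), so the identification you gesture at genuinely requires the two-isotropic-subgroup bookkeeping above. Until that fiber count is proved, the factor $|I/I'|$ — the whole point of the statement — remains unestablished.
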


This means that the assignment 
\begin{equation*}
(L, I) \mapsto M^{!}(L, I) 
\end{equation*}
is a contravariant functor 
from the category of pairs $(L, I)$ 
to the category of associative ${\C}$-algebras, 
by assigning the morphism $|I/I'|^{-1}\cdot |_{L'}$ 
to an embedding $(L', I')\hookrightarrow (L, I)$. 

The proof of Proposition \ref{prop: functorial main} is reduced to the following assertion. 

\begin{proposition}\label{prop: functorial}
Let $L'\subset L$ and $I'\subset I$ be as in Proposition \ref{prop: functorial main}. 
We put $K=I^{\perp}\cap L / I$ and $K'=(I')^{\perp}\cap L'/I'$. 
Let 
$\xi \colon {\ML}\to M^{!}$ and 
$\xi' \colon M^{!}(L')\to M^{!}$ be the maps 
$\xi= \langle \cdot {\pushLK}, {\ThetaK} \rangle$ and 
$\xi'= \langle \cdot \! \downarrow^{L'}_{K'}, \Theta_{(K')^{+}} \rangle$ 
respectively. 
Then we have 
\begin{equation*}
\xi' \circ |_{L'} = |I/I'| \cdot \xi. 
\end{equation*}
\end{proposition}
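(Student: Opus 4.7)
The plan is to factor the inclusion $L' \subset L$ through the intermediate lattice $L'' := L' \oplus N$, where $N = (L')^\perp \cap L$, and treat the two stages independently. By functoriality of the quasi-pullback, $(f|_{L''})|_{L'} = f|_{L'}$ (which follows directly from the definition, using $L \supset L'' = L' \oplus N$ at the first step and the splitting at the second). Moreover, since $I_{{\Q}} \subset L'_{{\Q}}$, the isotropic $I'' := I \cap L''$ equals $I' = I \cap L'$; so the factor $|I/I'|$ decomposes as $|I/I''| \cdot |I''/I'| = |I/I''| \cdot 1$. It therefore suffices to prove the identity for each stage.

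\emph{Case 1 (orthogonal splitting $L' \subset L' \oplus N$).} Here $K'' = K' \oplus N$, $(L'')^* = (L')^* \oplus N$, $\Theta_{(K'')^+} = \Theta_{(K')^+} \otimes \Theta_{N^+}$, and $\downarrow^{L''}_{(L'')^*} = \downarrow^{L'}_{(L')^*} \otimes \mathrm{id}_{A_N}$. The nested contraction $\xi''(g) = \langle g \downarrow^{L''}_{(L'')^*}, \Theta_{(K')^+} \otimes \Theta_{N^+} \rangle$ then separates: the inner contraction with $\Theta_{N^+}$ over $A_N$ produces $g|_{L'}$, and the outer contraction after $\downarrow^{L'}_{(L')^*}$ with $\Theta_{(K')^+}$ yields $\xi'(g|_{L'})$. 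This is a direct unpacking essentially as in Example~\ref{ex: Jacobi form}.

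\emph{Case 2 (finite-index $L'' \subset L$).} Here $L''_{{\Q}} = L_{{\Q}}$, so $f|_{L''} = f \uparrow^{L''}_L$ is a pure pullback. Using the adjunction $\langle X\downarrow, Y\rangle = \langle X, Y\uparrow\rangle$ twice, the claim $\xi''(f|_{L''}) = |I/I''|\, \xi(f)$ becomes the equality of $\rho_L^\vee$-valued forms
\begin{equation*}
(\Theta_{(K'')^+} \uparrow^{L''}_{(L'')^*}) \downarrow^{L''}_L \; = \; |I/I''| \cdot \Theta_{K^+} \uparrow^L_{L^*}.
\end{equation*}
I would verify this componentwise in $A_L$: using $\Theta_{K^+} = \Theta_{(K'')^+} \downarrow^{(K'')^+}_{K^+}$ from Section~\ref{sec: preliminary}, together with the canonical isomorphisms $A_{L^*} \simeq A_K$ and $A_{(L'')^*} \simeq A_{K''}$ of Lemma~\ref{lem: overlattice split} (compatibly with $(L'')^* \subset L^*$ and $K'' \subset K$), each $\mu$-component of both sides becomes a sum of theta functions $\theta_{(K'')^+ + \tilde\nu}$. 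The matching of multiplicities reduces to the lattice identity $|(L/L'') \cap ((I'')^*/I'')| = |I/I''|$ inside $A_{L''}$, which comes from $L \cap ((I'')^* + L'') = I + L''$; the vanishing of the left side for $\mu \notin (I^*/I)^\perp$ follows from the same pairing analysis.

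The main obstacle is Case~2, specifically two points: (i) the compatibility of the Lemma~\ref{lem: overlattice split} identifications with the inclusion $(L'')^* \subset L^*$ (so that the pullback $\uparrow^{(L'')^*}_{L^*}$ corresponds to the $K^+$-side pullback $\uparrow^{(K'')^+}_{K^+}$), and (ii) the combinatorial verification of the lattice identity producing the factor $|I/I''|$ exactly, which uses the primitivity of $I$ in $L$ and the definitions of the starred lattices. Once Case~2 is established, combining it with Case~1 and the functoriality of quasi-pullback gives the full identity $\xi'(f|_{L'}) = |I/I'|\, \xi(f)$.
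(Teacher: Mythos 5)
Your overall architecture coincides with the paper's: factor $L'\subset L$ through $L''=L'\oplus N$, observe that $I\cap L''=I'$ so the index $|I/I'|$ enters only at the finite-index stage, and treat the split stage and the finite-index stage separately. Your Case~1 is exactly Lemma~\ref{prop: split case} (the factorization $\Theta_{(K'')^{+}}=\Theta_{(K')^{+}}\otimes\Theta_{N^{+}}$ and separation of the contraction), and the lattice identity you isolate in Case~2, $L\cap\langle L'',(I'')^{\ast}\rangle=\langle L'',I\rangle$, is precisely the computation $I_{1}\cap I_{2}=I/I''$ that produces the factor $|I/I''|$ in the paper's proof of Lemma~\ref{prop: finite pullback}.

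The one genuine issue is how you propose to finish Case~2. You reduce to an identity of vector-valued theta series and flag as obstacle~(i) the ``compatibility \dots with the inclusion $(L'')^{\ast}\subset L^{\ast}$''; but that inclusion fails in general. Already for $L=U=\langle e,f\rangle$, $L''=\langle e,2f\rangle$, $I={\Z}e$ one finds $(I'')^{\ast}={\Z}(e/2)$ and $(L'')^{\ast}=\langle e/2,2f\rangle\not\subset U=L^{\ast}$. The two overlattices $L^{\ast}$ and $(L'')^{\ast}$ of $L''$ correspond to two isotropic subgroups $I_{1}=L/L''$ and $I_{2}=(I'')^{\ast}/I''$ of $A_{L''}$ that are not nested, so a componentwise comparison organized around a single chain of inclusions does not go through as written. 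The paper resolves exactly this point with Lemma~\ref{lem: finite pullback FQM}: the operator identity $\downarrow^{A}_{A_{2}}\circ\uparrow^{A}_{A_{1}}=|I_{1}\cap I_{2}|\uparrow^{A_{2}}_{A'}\circ\downarrow^{A_{1}}_{A'}$ for an arbitrary pair of isotropic subgroups, where $A'$ is the double quotient $(I_{1}^{\perp}\cap I_{2}^{\perp})/((I_{1}\cap I_{2}^{\perp})+(I_{2}\cap I_{1}^{\perp}))$. Applied with $A=A_{L''}$ this identifies $A'$ with $A_{K}$ and yields the commutation relation \eqref{eqn: pull push commute lattice} with no theta series involved; your formulation is the adjoint of this identity evaluated on $\Theta_{(K'')^{+}}$. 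Once you prove that operator identity (or its adjoint), the rest of your argument closes, so the missing ingredient is precisely Lemma~\ref{lem: finite pullback FQM} rather than any lattice-level inclusion.
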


Indeed, if we admit Proposition \ref{prop: functorial}, we can calculate 
\begin{eqnarray*}
(f|_{L'})\ast_{I'}(g|_{L'}) 
& = & 
\xi'(f|_{L'})\cdot (g|_{L'}) 
= |I/I'| \cdot \xi(f) \cdot (g|_{L'}) \\ 
& = & 
|I/I'| \cdot (\xi(f)\cdot g)|_{L'} 
= |I/I'| \cdot (f\ast_{I}g)|_{L'}. 
\end{eqnarray*}
Thus Proposition \ref{prop: functorial} implies Proposition \ref{prop: functorial main}. 

Before proceeding, we note a consequence. 

\begin{corollary}\label{cor: functorial add consequence}
Let $\Theta^{\perp}(L)\subset {\ML}$ and 
$\Theta^{\perp}(L')\subset M^!(L')$ be 
the respective left annihilators. 
Then we have 
$|_{L'}^{-1}(\Theta^{\perp}(L'))=\Theta^{\perp}(L)$. 
In particular, we have 
${\rm Ker}(|_{L'})\subset \Theta^{\perp}(L)$. 
The map 
${\ML}/\Theta^{\perp}(L) \to M^!(L')/\Theta^{\perp}(L')$ 
induced by $|I/I'|^{-1}\cdot |_{L'}$ 
is inclusion of ideals in the polynomial ring $M^!={\C}[j]$. 
\end{corollary}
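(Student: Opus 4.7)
The plan is to extract everything formally from the intertwining relation
$\xi' \circ |_{L'} = |I/I'| \cdot \xi$ established in Proposition \ref{prop: functorial},
combined with the description $\Theta^{\perp} = \ker(\xi)$
and the identification of the quotients ${\ML}/\Theta^{\perp}(L)$ and $M^!(L')/\Theta^{\perp}(L')$
with ideals of $M^! = {\C}[j]$ furnished by Proposition \ref{prop: theta kernel basic}.

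First I would verify the equality $|_{L'}^{-1}(\Theta^{\perp}(L')) = \Theta^{\perp}(L)$.
For any $f \in {\ML}$, the condition $f|_{L'} \in \Theta^{\perp}(L')$ reads $\xi'(f|_{L'}) = 0$,
which by Proposition \ref{prop: functorial} is equivalent to $|I/I'| \cdot \xi(f) = 0$,
and hence to $\xi(f) = 0$, i.e.\ $f \in \Theta^{\perp}(L)$.
The inclusion $\ker(|_{L'}) \subset \Theta^{\perp}(L)$ follows immediately from
$\ker(|_{L'}) \subset |_{L'}^{-1}(\Theta^{\perp}(L'))$.

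Next, for the statement about the induced map between quotients,
the previous step guarantees well-definedness of
$|I/I'|^{-1} \cdot |_{L'} \colon {\ML}/\Theta^{\perp}(L) \to M^!(L')/\Theta^{\perp}(L')$.
Using the canonical identifications
${\ML}/\Theta^{\perp}(L) \simeq \xi({\ML}) \subset M^!$ and
$M^!(L')/\Theta^{\perp}(L') \simeq \xi'(M^!(L')) \subset M^!$
from Proposition \ref{prop: theta kernel basic},
the class of $f$ corresponds to $\xi(f)$,
while the class of $|I/I'|^{-1} f|_{L'}$ corresponds to
$|I/I'|^{-1} \xi'(f|_{L'}) = \xi(f)$ by Proposition \ref{prop: functorial}.
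Hence the induced map acts as the identity on elements of $M^!$;
in particular $\xi({\ML}) \subset \xi'(M^!(L'))$, and the induced map is precisely
the tautological inclusion of these ideals inside ${\C}[j]$.

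The main obstacle here is essentially nonexistent:
the corollary is a formal consequence of Propositions \ref{prop: functorial} and \ref{prop: theta kernel basic}.
The only point that warrants a moment's care is the observation that
the factor $|I/I'|^{-1}$ in the definition of the ring homomorphism
exactly cancels the factor $|I/I'|$ appearing in Proposition \ref{prop: functorial},
which is what makes the induced map literally an inclusion rather than merely a scalar multiple of one.
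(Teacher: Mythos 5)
Your proof is correct and follows exactly the paper's route: both deduce the preimage equality directly from the intertwining relation $\xi' \circ |_{L'} = |I/I'|\cdot \xi$ of Proposition \ref{prop: functorial}, and then identify the induced map on quotients with an inclusion of ideals via the embeddings $\xi$, $\xi'$ from Proposition \ref{prop: theta kernel basic}. The paper's own proof is just a terser version of the same argument; your added details (the cancellation of the factor $|I/I'|$ and the resulting containment $\xi({\ML}) \subset \xi'(M^!(L'))$) are accurate.
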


\begin{proof}
The equality 
$|_{L'}^{-1}(\Theta^{\perp}(L'))=\Theta^{\perp}(L)$ 
follows from Proposition \ref{prop: functorial}. 
Since $\xi$ and $\xi'$ embed 
${\ML}/\Theta^{\perp}(L)$ and $M^!(L')/\Theta^{\perp}(L')$ 
as ideals in $M^!$ respectively, 
the last assertion follows.  
\end{proof}

Thus the kernel of the quasi-pullback map $|_{L'}$ 
provides a natural example of two-sided ideal of ${\ML}$ contained in $\Theta^{\perp}$.

The proof of Proposition \ref{prop: functorial} occupies \S \ref{ssec: finite pullback} and \S \ref{ssec: general case}. 
It is divided into two parts, 
reflecting the fact that the quasi-pullback $|_{L'}$ is composition of 
two operators $\uparrow_{L}^{L'\oplus N}$ and $\langle \cdot, \Theta_{N^{+}} \rangle$. 
In \S \ref{ssec: finite pullback} we consider the case when $L'$ is of finite index in $L$. 
In \S \ref{ssec: general case} we consider the case when the splitting $L=L'\oplus N$ holds. 
The proof in the general case is a combination of these two special cases.

\subsection{The case of finite pullback}\label{ssec: finite pullback} 

In this subsection we prove Proposition \ref{prop: functorial} 
in the case when $L'$ is of finite index in $L$. 
In this case, the quasi-pullback $|_{L'}$ is the operation $\uparrow^{L'}_{L}$, 
and Proposition \ref{prop: functorial} takes the following form. 

\begin{lemma}\label{prop: finite pullback}
When $L'\subset L$ is of finite index, we have for $f \in {\ML}$ 
\begin{equation*}
\xi'(f\!\uparrow^{L'}_{L}) = |I/I'| \cdot \xi(f).  
\end{equation*}
\end{lemma}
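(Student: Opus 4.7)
The plan is to reformulate the identity via adjointness and then reduce it to a combinatorial matching in which the factor $|I/I'|$ appears naturally.

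First, I would use that $\uparrow$ and $\downarrow$ are mutually adjoint with respect to the standard Hermitian metrics on the discriminant group rings, on both the standard and dual Weil representations. Applying this twice to move all pushforwards onto the $\Theta$-side, I would rewrite
\[ \xi(f) = \langle f, \Theta_{K^{+}}\uparrow^{L}_{K}\rangle, \qquad \xi'(f\uparrow^{L'}_{L}) = \langle f, \Theta_{(K')^{+}}\uparrow^{L'}_{K'}\downarrow^{L'}_{L}\rangle. \]
Since $f$ is arbitrary, the Lemma is equivalent to the identity in $\C A_{L}^{\vee}$-valued modular forms
\[ \Theta_{(K')^{+}}\uparrow^{L'}_{K'}\downarrow^{L'}_{L} = |I/I'|\cdot \Theta_{K^{+}}\uparrow^{L}_{K}. \]

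Second, I would construct a canonical isometric embedding $\alpha\colon K'\hookrightarrow K$. Since $I'_{\Q}=I_{\Q}$, every $x\in (I')^{\perp}\cap L'$ automatically satisfies $(x,I)=0$, so $x\mapsto x+I$ descends to an injective isometry $K'\to K$ with image $(I^{\perp}\cap L'+I)/I$. In particular $(K')^{+}$ becomes a finite-index sublattice of $K^{+}$, so by the pushforward relation for theta series,
\[ \theta_{\lambda}^{K^{+}} = \sum_{\lambda'\in (p''_K)^{-1}(\lambda)}\theta_{\lambda'}^{(K')^{+}}, \]
where $p''_K$ is the projection associated to the overlattice $K^{+}\supset (K')^{+}$.

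Expanding the previous displayed identity componentwise in each $\mathbf{e}_{\mu}^{\vee}$ ($\mu \in A_L$), it reduces to the combinatorial identity
\[ \sum_{\nu \in q^{-1}(\mu)\cap (J')^{\perp}} \theta_{p'(\nu)}^{(K')^{+}} = |I/I'|\cdot \mathbf{1}[\mu\in J^{\perp}]\cdot \theta_{p(\mu)}^{K^{+}}, \]
where $J_{1}=L/L'\subset A_{L'}$, $J=I^{*}/I\subset A_L$, $J'=(I')^{*}/I'\subset A_{L'}$, and $q,p,p'$ are the associated quotient projections. The decisive structural observation is that $J'':=I/I'$ embeds into $L/L'=J_{1}$ (well-defined since $I\cap L'=I'$), has order $|I/I'|$, coincides with $J_{1}\cap J'$ (via the identity $L\cap(I')^{*}=I$, a consequence of $I$ being primitive in $L$), and is contained in $(J')^{\perp}$ (since $(I,(I')^{*})\subset (I,I_{\Q})=0$). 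Consequently, $J''$ acts freely by translation on the finite set $q^{-1}(\mu)\cap (J')^{\perp}$, stabilizing the fibers of $p'$, and each fiber has exactly $|I/I'|$ elements.

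It remains to match the $J''$-quotient of $q^{-1}(\mu)\cap (J')^{\perp}$ under $p'$ with the set $(p''_K)^{-1}(p(\mu))$ when $\mu\in J^{\perp}$, and to check the set is empty when $\mu\notin J^{\perp}$. Combining with the decomposition of $\theta_{p(\mu)}^{K^{+}}$ from the second step then matches both sides of the combinatorial identity and thereby proves the Lemma. The main obstacle is precisely this final matching step: verifying that $p'$ modulo $J''$ surjects onto $(p''_K)^{-1}(p(\mu))$, together with the emptiness criterion, requires carefully tracking the interaction between the overlattices $L^{*}=\langle L,I^{*}\rangle$ and $(L')^{*}=\langle L',(I')^{*}\rangle$ under the inclusion $L'\subset L$, and translating the orthogonality condition $\mu\in J^{\perp}$ into the existence of compatible lifts across the discriminant groups of $L$ and $L'$.
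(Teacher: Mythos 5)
Your setup is sound and in fact runs parallel to the paper's: after unwinding adjointness, both arguments reduce the Lemma to an identity of the form ``pushforward after pullback equals $|I/I'|$ times pullback after pushforward,'' with the factor $|I/I'|$ arising because the relevant fibers are cosets of $I_1\cap I_2$ where $I_1=L/L'$ and $I_2=(I')^{\ast}/I'$ inside $A_{L'}$ (your $J_1\cap J'=I/I'$ computation is exactly the paper's $I_1\cap I_2=\langle L',I\rangle/L'=I/I'$). The difference is only one of bookkeeping: the paper proves the operator identity $\downarrow^{L'}_{K'}\circ\uparrow^{L'}_{L}=|I/I'|\uparrow^{K'}_{K}\circ\downarrow^{L}_{K}$ on the group rings directly (as a special case of a general statement, Lemma \ref{lem: finite pullback FQM}, for two isotropic subgroups of a finite quadratic module), whereas you transport everything to the theta side and expand componentwise.

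However, there is a genuine gap: the ``final matching step'' that you defer is precisely where the content of the proof lies, and your argument does not carry it out. Concretely, you must show (i) that when $\mu\notin J^{\perp}$ the set $q^{-1}(\mu)\cap (J')^{\perp}$ is empty — in the paper this follows because any lift $\tilde\mu\in I_1^{\perp}$ of such a $\mu$ pairs nontrivially with $\hat I_2'=I_2\cap I_1^{\perp}$, hence with $I_2$, so is annihilated by $\downarrow^{A}_{A_2}$; and (ii) that for $\mu\in J^{\perp}$ the map $p'$ induces a bijection from the $J''$-quotient of $q^{-1}(\mu)\cap(J')^{\perp}$ onto $(p''_K)^{-1}(p(\mu))$. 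Point (ii) requires knowing that the restrictions of the quotient maps $p_1\colon I_1^{\perp}\to A_1$ and $p_2\colon I_2^{\perp}\to A_2$ to $I_1^{\perp}\cap I_2^{\perp}$ are \emph{surjective} onto $(I_2')^{\perp}\cap A_1$ and $(I_1')^{\perp}\cap A_2$ respectively, with kernels $\hat I_1'=I_1\cap I_2^{\perp}$ and $\hat I_2'$; the paper establishes this via the isomorphism $I_1/\hat I_1'\simeq(\hat I_2')^{\perp}/I_2^{\perp}$ and the resulting commutative diagram identifying both quotients with $A'=A_K$. Without this surjectivity (equivalently, the existence of a common lift $\tilde\lambda\in I_1^{\perp}\cap I_2^{\perp}$ of $\lambda$), the two sides of your combinatorial identity cannot be compared term by term, so the proof is incomplete as written even though the strategy would succeed if this lattice/finite-quadratic-module bookkeeping were supplied.
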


This is a consequence of the following calculation in 
finite quadratic modules. 

\begin{lemma}\label{lem: finite pullback FQM}
Let $A$ be a finite quadratic module and 
$I_1, I_2 \subset A$ be two isotropic subgroups. 
We set $A_1=I_1^{\perp}/I_1$, $A_2=I_2^{\perp}/I_2$ and 
\begin{equation*}
A'= 
(I_{1}^{\perp}\cap I_{2}^{\perp}) / ((I_{1}\cap I_{2}^{\perp})+(I_{2}\cap I_{1}^{\perp})). 
\end{equation*}
Let $I_{2}'=I_{2}\cap I_{1}^{\perp}/I_{1}\cap I_{2}$ be the image of $I_{2}\cap I_{1}^{\perp}$ in $A_{1}$, 
and $I_{1}'=I_{1}\cap I_{2}^{\perp}/I_{1}\cap I_{2}$ be the image of $I_{1}\cap I_{2}^{\perp}$ in $A_{2}$. 
Then, under the natural isomorphism 
\begin{equation}\label{eqn: description of A'}
A' \simeq (I_{2}')^{\perp}\cap A_{1}/I_{2}' \simeq (I_{1}')^{\perp}\cap A_{2}/I_{1}', 
\end{equation}
we have 
\begin{equation}\label{eqn: pull push commute FQM}
\downarrow^{A}_{A_{2}} \circ \uparrow^{A}_{A_{1}} = 
| I_{1} \cap I_{2} |  \uparrow^{A_{2}}_{A'} \circ \downarrow^{A_{1}}_{A'} 
\end{equation}
as linear maps 
${\C}A_{1} \to {\C}A_{2}$. 
\end{lemma}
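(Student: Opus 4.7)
The plan is to evaluate both sides of \eqref{eqn: pull push commute FQM} on a standard basis vector $\mathbf{e}_\lambda$ with $\lambda\in A_1$ and verify they agree. Unwinding the definitions in \eqref{eqn: pull push}, I would first compute
\begin{equation*}
\downarrow^{A}_{A_2}\!\circ\uparrow^{A}_{A_1}\!(\mathbf{e}_\lambda) \;=\; \sum_{\tilde\lambda \in p_1^{-1}(\lambda) \cap I_2^{\perp}} \mathbf{e}_{p_2(\tilde\lambda)},
\end{equation*}
where $p_i:I_i^\perp \to A_i$ are the projections. On the right-hand side, $\downarrow^{A_1}_{A'}(\mathbf{e}_\lambda)$ vanishes unless $\lambda \in (I_2')^\perp \subset A_1$, in which case it equals $\mathbf{e}_{\bar\lambda}$ with $\bar\lambda\in A'$ the image; pulling up then yields $\sum_\nu \mathbf{e}_\nu$ over the fiber of $A_2 \supset (I_1')^\perp \twoheadrightarrow A'$ above $\bar\lambda$. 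So the task reduces to matching supports and multiplicities.

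For this, the key observations I would establish, in order, are: (i) the image $p_1(I_1^\perp \cap I_2^\perp)$ equals $(I_2')^\perp$ in $A_1$, so the LHS is nonzero exactly when the RHS is; (ii) when nonempty, $p_1^{-1}(\lambda)\cap I_2^\perp$ is a coset of $I_1 \cap I_2^\perp$, and $p_2$ sends this coset into $(I_1')^\perp \subset A_2$ (using $(\tilde\lambda, I_1\cap I_2^\perp) \subset (I_1^\perp, I_1) = 0$); (iii) the restriction $p_2\colon I_1\cap I_2^\perp \to I_1'$ is surjective with kernel $I_1\cap I_2$, so each fiber of $p_2$ on our coset has cardinality $|I_1\cap I_2|$; (iv) the resulting $I_1'$-coset in $A_2$ is precisely $(p_2')^{-1}(\bar\lambda)$ under the identification $A' \simeq (I_1')^\perp/I_1'$, because any two lifts of $\lambda$ to $I_1^\perp \cap I_2^\perp$ differ by an element of $I_1\cap I_2^\perp$, which maps into $I_1'$ under $p_2$. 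Combining (i)--(iv) gives LHS $= |I_1\cap I_2|\,(p_2')^{-1}(\bar\lambda)$-sum, matching RHS.

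The main obstacle is justifying the natural isomorphism \eqref{eqn: description of A'}, which underlies both step (i) and step (iv). The map $I_1^\perp \cap I_2^\perp \to (I_2')^\perp/I_2'$ sending $\tilde\lambda \mapsto p_1(\tilde\lambda) + I_2'$ is easily seen to have kernel $(I_1\cap I_2^\perp)+(I_2\cap I_1^\perp)$, so it induces an injection $A' \hookrightarrow (I_2')^\perp/I_2'$. Surjectivity amounts to the identity
\begin{equation*}
I_1 + (I_1^\perp \cap I_2^\perp) \;=\; I_1^\perp \cap (I_2\cap I_1^\perp)^{\perp},
\end{equation*}
which requires the non-degeneracy of the bilinear form on $A$. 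I would prove it by taking perpendiculars on both sides and using the standard identities $(B\cap C)^\perp = B^\perp + C^\perp$ and $B^{\perp\perp}=B$ in the finite quadratic module $A$: the left side dualizes to $I_1^\perp \cap (I_1+I_2) = I_1 + (I_1^\perp \cap I_2)$, the right side dualizes to $I_1 + (I_2\cap I_1^\perp)$, and these agree. The symmetric statement gives $A' \simeq (I_1')^\perp/I_1'$. Once this duality is in hand, the rest of the argument is a straightforward coset count, and Lemma \ref{prop: finite pullback} in the next subsection will then follow by specializing to $I_1=0$, $I_2=I$ (pairing with $\Theta_{K^+}$ then simply counts the constant terms).
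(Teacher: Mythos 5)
Your proof is correct and follows essentially the same route as the paper's: evaluate both sides on $\mathbf{e}_{\lambda}$, show both vanish unless $\lambda\in (I_{2}')^{\perp}$, identify $p_{1}^{-1}(\lambda)\cap I_{2}^{\perp}$ as a coset of $I_{1}\cap I_{2}^{\perp}$, and count the fibers of $I_{1}\cap I_{2}^{\perp}\to I_{1}'$, which have size $|I_{1}\cap I_{2}|$. The only divergence is in justifying the surjectivity underlying \eqref{eqn: description of A'}: the paper compares indices via $I_{1}/(I_1\cap I_2^\perp)\simeq (I_{2}\cap I_1^\perp)^{\perp}/I_{2}^{\perp}$, whereas you dualize and use the modular law together with $(B\cap C)^{\perp}=B^{\perp}+C^{\perp}$ and $B^{\perp\perp}=B$; both hinge on nondegeneracy and yours is, if anything, more explicit. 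One caveat on your closing remark: Lemma \ref{prop: finite pullback} does \emph{not} follow by specializing to $I_{1}=0$, $I_{2}=I$ (that specialization renders \eqref{eqn: pull push commute FQM} trivial); the paper applies the lemma with $A=A_{L'}$, $I_{1}=L/L'$ and $I_{2}=(I')^{\ast}/I'$.
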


We postpone the proof of Lemma \ref{lem: finite pullback FQM} for a moment, 
and first explain how Lemma \ref{prop: finite pullback} is deduced from Lemma \ref{lem: finite pullback FQM}. 

\begin{proof}[(Proof of Lemma \ref{prop: finite pullback})]
Let $K=I^{\perp}\cap L /I$ and $K'=(I')^{\perp}\cap L' /I'$. 
We have a canonical embedding $K'\hookrightarrow K$ of finite index. 
Since ${\ThetaK}=\Theta_{(K')^{+}}\!\downarrow^{(K')^{+}}_{K^{+}}$, 
we find that  
\begin{equation*}
\xi(f) 
= \langle f{\pushLK}, {\ThetaK} \rangle 
= \langle f{\pushLK}, \: \Theta_{(K')^{+}}\!\downarrow^{(K')^{+}}_{K^{+}} \rangle 
= \langle f{\pushLK}\uparrow_{K}^{K'}, \: \Theta_{(K')^{+}} \rangle. 
\end{equation*}
On the other hand, we have 
\begin{equation*}
\xi'(f \! \uparrow^{L'}_{L}) = 
\langle f {\pullL}\downarrow^{L'}_{K'}, \: \Theta_{(K')^{+}} \rangle. 
\end{equation*}
Thus it is sufficient to show that 
\begin{equation}\label{eqn: pull push commute lattice}
\downarrow^{L'}_{K'} \circ {\pullL} = |I/I'| \uparrow^{K'}_{K} \circ {\pushLK} 
\end{equation}
as linear maps 
${\C}A_{L}\to {\C}A_{K'}$. 

We apply Lemma \ref{lem: finite pullback FQM} as follows. 
Let $I^{\ast}=I_{{\Q}}\cap L^{\vee}$ and 
$(I')^{\ast} = I_{{\Q}}\cap (L')^{\vee}$. 
We set 
$A=A_{L'}$, 
$I_{1}=L/L'$ and 
$I_{2}=(I')^{\ast}/I'$. 
Then 
$A_{1}\simeq A_{L}$ and  
$A_{2}\simeq A_{K'}$. 
We have 
$I_{2}\cap I_{1}^{\perp}=I^{\ast}/I'$ and 
\begin{equation*}
I_{1} \cap I_{2} 
= ( L\cap \langle L', (I')^{\ast} \rangle )/L' 
= \langle L', I \rangle /L' 
= I/I'. 
\end{equation*}
This implies that 
$I_{2}'=I^{\ast}/I \subset A_{L}$ 
and $A'= A_{K}$. 
Thus we have 
\begin{equation*}
\uparrow^{A}_{A_{1}} = \uparrow^{L'}_{L}, \quad 
\downarrow^{A}_{A_{2}} = \downarrow^{L'}_{K'}, \quad 
\downarrow^{A_{1}}_{A'} = \downarrow^{L}_{K}, \quad 
\uparrow^{A_{2}}_{A'} = \uparrow^{K'}_{K},  
\end{equation*}
hence \eqref{eqn: pull push commute FQM} implies \eqref{eqn: pull push commute lattice}.  
\end{proof}

We now prove Lemma \ref{lem: finite pullback FQM}. 

\begin{proof}[(Proof of Lemma \ref{lem: finite pullback FQM})] 
We first justify the isomorphism \eqref{eqn: description of A'}, 
which also implies that $A'$ is nondegenerate. 
We write 
$\hat{I}_{1}'=I_{1}\cap I_{2}^{\perp}$ and  
$\hat{I}_{2}'=I_{2}\cap I_{1}^{\perp}$. 
We shall establish the following commutative diagram: 
\begin{equation*}\label{eqn: CD}
\xymatrix{
    & (\hat{I}_2')^{\perp}\cap I_{1}^{\perp} \ar[rd]^{p_1} &  \\ 
(\hat{I}_{1}')^{\perp}\cap I_{2}^{\perp} \ar[rd]_{p_2} & 
I_{1}^{\perp}\cap I_{2}^{\perp} \ar[r]^{p_1'} \ar@{^{(}-_>}[u] \ar[d]^{p_2'} \ar@{_{(}-_>}[l]
& (I_{2}')^{\perp}\cap A_{1} \ar[d]_{q_{2}}  \\ 
   & (I_{1}')^{\perp}\cap A_{2} \ar[r]_{q_1} & A' 
}
\end{equation*}
Here $p_i$ is the quotient map by $I_{i}$ and 
$p_{i}'$ is the restriction of $p_{i}$. 
Since we have 
$\hat{I}_{1}' = I_{1} \cap (I_{1}^{\perp}\cap I_{2}^{\perp})$ and 
\begin{equation*}
I_{1} / \hat{I}_{1}'  \simeq 
((\hat{I}_2')^{\perp}\cap I_{1}^{\perp}) / (I_{1}^{\perp}\cap I_{2}^{\perp}) 
\simeq (\hat{I}_{2}')^{\perp}/I_{2}^{\perp}, 
\end{equation*}
we see that $p_{1}'$ is surjective and is the quotient map by $\hat{I}_{1}'$. 
This induces the map  
$q_{2}\colon (I_2')^{\perp}\cap A_{1} \to A'$ 
as the quotient map by $I_2'$. 
Similarly, we find that 
$p_2'$ is the quotient map by $\hat{I}_{2}'$ and 
$q_{1}$ is induced as the quotient map by $I_{1}'$. 

We now prove \eqref{eqn: pull push commute FQM}. 
Let $\lambda\in A_{1}$. 
It suffices to show that 
\begin{equation}\label{eqn: pullback FQM}
{\elambda}\uparrow_{A_{1}}^{A}\downarrow^{A}_{A_{2}} = 
|I_{1}\cap I_{2}| \cdot {\elambda}\downarrow^{A_{1}}_{A'}\uparrow^{A_{2}}_{A'}. 
\end{equation}
When $\lambda\not\in (I_{2}')^{\perp}$, we have 
$\mathbf{e}_{\lambda}\! \downarrow^{A_{1}}_{A'}=0$. 
On the other hand, we have 
$(\tilde{\lambda}, \hat{I}_{2}')\not\equiv 0$ 
for every $\tilde{\lambda}\in I_{1}^{\perp}$ 
in the inverse image of $\lambda$. 
In particular, we have $(\tilde{\lambda}, I_2)\not\equiv 0$ and hence 
$\mathbf{e}_{\tilde{\lambda}}\downarrow^{A}_{A_{2}}=0$. 
This implies that  
${\elambda}\uparrow_{A_{1}}^{A}\downarrow^{A}_{A_{2}}=0$.

Next let $\lambda\in (I_{2}')^{\perp}$. 
By the above commutative diagram, 
we can choose $\tilde{\lambda}\in I_1^{\perp}\cap I_2^{\perp}$ such that 
$p_{1}'(\tilde{\lambda})=\lambda$. 
Then 
\begin{equation}\label{eqn: push then pull}
\mathbf{e}_{\lambda}\downarrow^{A_{1}}_{A'} \uparrow_{A'}^{A_{2}} \: = \: 
\mathbf{e}_{q_{2}(\lambda)}\uparrow^{A_{2}}_{A'} \: = \: 
\sum_{\mu'\in I_{1}'} \mathbf{e}_{p_{2}'(\tilde{\lambda})+\mu'}. 
\end{equation}
On the other hand, we have 
\begin{equation}\label{eqn: pull then push}
\mathbf{e}_{\lambda}\uparrow_{A_{1}}^{A}\downarrow^{A}_{A_{2}} 
\: = \:  
\sum_{\mu\in I_{1}} \mathbf{e}_{\tilde{\lambda}+\mu}\downarrow^{A}_{A_{2}}  
\: = \: 
\sum_{\mu\in \hat{I}_{1}'} \mathbf{e}_{p_{2}(\tilde{\lambda}+\mu)} 
\: = \: 
\sum_{\mu\in \hat{I}_{1}'} \mathbf{e}_{p_{2}'(\tilde{\lambda})+p_{2}'(\mu)}.  
\end{equation}
Here we used the equality 
$(\tilde{\lambda}+I_{1}) \cap I_{2}^{\perp} = \tilde{\lambda}+\hat{I}_{1}'$. 
Since the map 
$p_{2}'\colon \hat{I}_{1}'\to I_{1}'$ 
is the quotient map by $I_{1}\cap I_{2}$, 
its fibers consist of $|I_{1}\cap I_{2}|$ elements. 
Comparing \eqref{eqn: push then pull} and \eqref{eqn: pull then push},  
we obtain the desired equality \eqref{eqn: pullback FQM}. 
\end{proof}

\subsection{The split case}\label{ssec: general case} 

Next we prove Proposition \ref{prop: functorial} 
in the case when the splitting $L=L'\oplus N$ holds. 
In this case, 
$\uparrow^{L'\oplus N}_{L}$ is identity, 
$I'$ coincides with $I$, 
so Proposition \ref{prop: functorial} takes the following form. 

\begin{lemma}\label{prop: split case}
When the splitting $L=L'\oplus N$ holds, we have for $f \in {\ML}$ 
\begin{equation*}
\xi'(\langle f, \Theta_{N^{+}} \rangle) = \xi(f).  
\end{equation*}
\end{lemma}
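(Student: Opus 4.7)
The plan is to unpack both sides as contractions against theta series and check that the splitting $L = L' \oplus N$ makes them identical termwise. The setup needs to be prepared first: since $I \subset L$ is integral and $I_{\mathbb{Q}} \subset L'_{\mathbb{Q}}$ with $L' = L \cap L'_{\mathbb{Q}}$ (because $L = L' \oplus N$ is an internal direct sum of primitive sublattices), we conclude $I \subset L'$, so $I' = I$. Next I would verify the lattice-level identity $L^{\ast} = (L')^{\ast} \oplus N$: indeed, $I^{\ast} = I_{\mathbb{Q}} \cap L^{\vee}$, and since $L^{\vee} = (L')^{\vee} \oplus N^{\vee}$ with $I_{\mathbb{Q}} \subset L'_{\mathbb{Q}}$, we get $I^{\ast} = I_{\mathbb{Q}} \cap (L')^{\vee} = (I')^{\ast}$, and so $L^{\ast} = \langle L, I^{\ast} \rangle = (L')^{\ast} \oplus N$. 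Taking $I^{\perp} \cap L / I = (I^{\perp} \cap L')/I \oplus N$ then gives the crucial splitting $K = K' \oplus N$, and hence $K^{+} = (K')^{+} \oplus N^{+}$ as positive-definite lattices.

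The next step is to translate this splitting into the relevant Weil-representation gadgets. Under $A_{K} = A_{K'} \oplus A_{N}$ and $A_{L} = A_{L'} \oplus A_{N}$, the pushforward operator factors as
\begin{equation*}
\downarrow^{L}_{K} \; = \; \downarrow^{L'}_{K'} \otimes \, \mathrm{id}_{\mathbb{C}A_{N}},
\end{equation*}
because the projection $I^{\perp} \cap L \to K$ is the identity on the $N$-summand. Correspondingly, the theta series multiplicatively splits along orthogonal direct sums, giving
\begin{equation*}
\Theta_{K^{+}} \; = \; \Theta_{(K')^{+}} \otimes \Theta_{N^{+}}
\end{equation*}
in $\mathbb{C}A_{K^{+}} = \mathbb{C}A_{(K')^{+}} \otimes \mathbb{C}A_{N^{+}}$, under the identification $\rho_{K^{+}} = \rho_{(K')^{+}} \otimes \rho_{N^{+}}$.

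With these two factorizations in hand, both sides of the claimed identity unfold into the same double sum. Writing $f = \sum_{\lambda_{1},\lambda_{2}} f_{\lambda_{1}\lambda_{2}} \mathbf{e}_{\lambda_{1}} \otimes \mathbf{e}_{\lambda_{2}}$ with $\lambda_{1} \in A_{L'}$, $\lambda_{2} \in A_{N}$, and noting that $I^{\perp} \cap L$ corresponds to $(I^{\perp} \cap L') \oplus N$, the left-hand side $\xi'(\langle f, \Theta_{N^{+}} \rangle)$ is obtained by first contracting the $\rho_{N}$-factor against $\Theta_{N^{+}}$, then pushing forward the $\rho_{L'}$-factor to $\rho_{K'}$, then contracting against $\Theta_{(K')^{+}}$. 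The right-hand side $\xi(f)$ pushes forward to $\rho_{K} = \rho_{K'} \otimes \rho_{N}$ and contracts against $\Theta_{(K')^{+}} \otimes \Theta_{N^{+}}$ in one step. Since the two contractions commute with the pushforward (they act on disjoint tensor factors), both procedures produce
\begin{equation*}
\sum_{\lambda_{1} \in (I^{\perp} \cap L')/L', \, \lambda_{2} \in A_{N}} f_{\lambda_{1}\lambda_{2}} \cdot \Theta_{(K')^{+},[\lambda_{1}]} \cdot \Theta_{N^{+},\lambda_{2}}.
\end{equation*}
This yields the desired equality. No genuine obstacle arises; the only care required is in tracking the identifications $A_{L^{\ast}} = A_{K}$ (via Lemma \ref{lem: overlattice split}) and $A_{(L')^{\ast}} = A_{K'}$ consistently with the $N$-summand, but that is ensured by the uniqueness statement in Lemma \ref{lem: overlattice split}.
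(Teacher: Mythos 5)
Your proposal is correct and follows essentially the same route as the paper: the splitting $K = K'\oplus N$, the factorization $\Theta_{K^{+}} = \Theta_{(K')^{+}}\otimes\Theta_{N^{+}}$, and the observation that $\downarrow^{L}_{K}$ acts only on the $L'$-tensor factor and hence commutes with contraction against $\Theta_{N^{+}}$ are exactly the ingredients of the paper's (more compact, operator-level) computation. The extra componentwise unfolding and the verification that $I'=I$ and $K=K'\oplus N$ are fine, though the paper records the latter facts in the lead-in rather than in the proof itself.
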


\begin{proof}
Since $K=K'\oplus N$, we have 
${\ThetaK}=\Theta_{(K')^{+}} \otimes \Theta_{N^+}$ 
under the natural isomorphism 
$\rho_{K^+}\simeq \rho_{(K')^{+}}\otimes \rho_{N^+}$. 
Therefore 
\begin{eqnarray*}
\xi'(\langle f, \Theta_{N^{+}} \rangle) 
& = & 
\langle \langle f, \Theta_{N^{+}} \rangle \! \downarrow^{L'}_{K'}, \: \Theta_{(K')^{+}} \rangle 
= 
\langle \langle f{\pushLK}, \Theta_{N^{+}} \rangle, \Theta_{(K')^{+}} \rangle \\ 
& = & 
\langle f{\pushLK}, \Theta_{N^{+}}\otimes \Theta_{(K')^{+}} \rangle 
= \xi(f). 
\end{eqnarray*}
This proves the desired equality.  
\end{proof}

We can now prove Proposition \ref{prop: functorial} in the general case. 

\begin{proof}[(Proof of Proposition \ref{prop: functorial})]
Let $L'\oplus N\subset L$ and $I'\subset I$ be as in Proposition \ref{prop: functorial}. 
We write 
$L''=L'\oplus N$,  
$K''=(I')^{\perp}\cap L''/I'$ and 
$\xi''= \langle \cdot\!\downarrow^{L''}_{K''}, \Theta_{(K'')^{+}} \rangle$. 
By using Lemma \ref{prop: split case} for $L'\subset L''$ and 
Lemma \ref{prop: finite pullback} for $L''\subset L$, 
we see that  
\begin{equation*}
\xi'(f|_{L'}) 
= \xi'(\langle f\!\uparrow_{L}^{L''}, \Theta_{N^{+}} \rangle ) 
= \xi''(f\!\uparrow_{L}^{L''}) 
= |I/I'|\cdot \xi(f). 
\end{equation*}
This proves Proposition \ref{prop: functorial} in the general case. 
\end{proof}

\subsection{Special finite pushforward}\label{ssec: functorial push}

$\Theta$-product is also covariantly functorial with respect to 
pushforward to a special type of overlattices. 
Let $I\subset L$ be as before. 

\begin{proposition}\label{prop: functorial push}
Let $L'$ be a sublattice of $L$ of finite index. 
Assume that $L= \langle L', I \rangle$. 
We set $I'=I\cap L'$. 
Then we have 
\begin{equation*}
(f\! \downarrow^{L'}_{L})\ast_{I}(g\! \downarrow^{L'}_{L}) = 
(f\ast_{I'}g)\! \downarrow^{L'}_{L} 
\end{equation*}
for $f, g\in M^!(L')$. 
\end{proposition}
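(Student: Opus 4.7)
The plan is to reduce the claim to the identity $\xi_{I}(f\!\downarrow^{L'}_{L}) = \xi_{I'}(f)$ for $f \in M^{!}(L')$. Granting this, since $\xi_{I'}(f) \in M^{!}$ is scalar-valued and the pushforward $\downarrow^{L'}_{L}$ is $M^{!}$-linear (being a homomorphism of Weil representations), one computes
\begin{equation*}
(f\!\downarrow^{L'}_{L})\ast_{I}(g\!\downarrow^{L'}_{L})
= \xi_{I}(f\!\downarrow^{L'}_{L})\cdot(g\!\downarrow^{L'}_{L})
= \xi_{I'}(f)\cdot(g\!\downarrow^{L'}_{L})
= (\xi_{I'}(f)\cdot g)\!\downarrow^{L'}_{L}
= (f\ast_{I'}g)\!\downarrow^{L'}_{L},
\end{equation*}
which is the desired equality.

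The lattice-theoretic heart of the argument is that the hypothesis $L = \langle L', I\rangle$ forces $L^{\ast} = (L')^{\ast}$, and in particular $I^{\ast} = (I')^{\ast}$. The inclusion $I^{\ast} \subseteq (I')^{\ast}$ is automatic from $L^{\vee} \subseteq (L')^{\vee}$. Conversely, for $x \in (I')^{\ast} \subseteq I_{\Q}$ one has $(x, L) = (x, L') + (x, I) \subseteq \Z + 0 = \Z$, because $x \in I_{\Q}$ is perpendicular to the isotropic sublattice $I$; hence $x \in L^{\vee}$ and so $x \in I^{\ast}$. Using $I \subset I^{\ast}$, it then follows that
\begin{equation*}
L^{\ast} = \langle L, I^{\ast}\rangle = \langle L' + I, I^{\ast}\rangle = \langle L', I^{\ast}\rangle = \langle L', (I')^{\ast}\rangle = (L')^{\ast}.
\end{equation*}

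With $L^{\ast} = (L')^{\ast}$ in hand, Lemma \ref{lem: overlattice split} canonically identifies both $A_{K}$ and $A_{K'}$ with the common discriminant form $A_{L^{\ast}}$, and under this identification $\Theta_{K^{+}} = \Theta_{(K')^{+}}$. Moreover, pushforwards compose, and both $\downarrow^{L}_{K} \circ \downarrow^{L'}_{L}$ and $\downarrow^{L'}_{K'}$ are pushforwards from $A_{L'}$ corresponding to the same isotropic subgroup $L^{\ast}/L' = (L')^{\ast}/L'$; hence they coincide. Therefore
\begin{equation*}
\xi_{I}(f\!\downarrow^{L'}_{L})
= \langle f\!\downarrow^{L'}_{L}\!\downarrow^{L}_{K},\; \Theta_{K^{+}}\rangle
= \langle f\!\downarrow^{L'}_{K'},\; \Theta_{(K')^{+}}\rangle
= \xi_{I'}(f),
\end{equation*}
which is the reduction we needed. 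The only nonformal step is the lattice identification $L^{\ast} = (L')^{\ast}$; once that is observed, everything else is a direct unpacking of the definitions.
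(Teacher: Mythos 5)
Your proof is correct and follows essentially the same route as the paper: both reduce the claim to the identity $\xi_{I}\circ\downarrow^{L'}_{L}=\xi_{I'}$, which rests on showing $(I')^{\ast}=I^{\ast}$, $L^{\ast}=(L')^{\ast}$, hence $K'\simeq K$ and $\downarrow^{L'}_{K'}=\downarrow^{L}_{K}\circ\downarrow^{L'}_{L}$. The only cosmetic difference is that the paper extracts these lattice facts from the bookkeeping already done in the proof of Lemma~\ref{prop: finite pullback}, whereas you re-derive them directly; the substance is identical.
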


\begin{proof}
We use the notation in the proof of Lemma \ref{prop: finite pullback}. 
Since $I_1=L/L'$ coincides with $I_1\cap I_2=I/I'$, 
we have $I_1 \subset I_2$. 
Hence $I_{1}'=\{ 0 \}$ 
and so the canonical embedding $K'\hookrightarrow K$ is isomorphic. 
Moreover, since $I_{2}=(I')^{\ast}/I'$ coincides with $I_{2}\cap I_{1}^{\perp}$, 
we have $(I')^{\ast}=I^{\ast}$ and hence 
$\langle L, I^{\ast} \rangle = \langle L', (I')^{\ast} \rangle$. 
These equalities imply that 
$\downarrow^{L'}_{K'} \: = \: \downarrow^{L}_{K} \circ \downarrow^{L'}_{L}$. 
Therefore we have 
\begin{equation*}
\xi'(f) = 
\langle f\! \downarrow^{L'}_{K'}, \: \Theta_{(K')^{+}} \rangle = 
\langle f\! \downarrow^{L'}_{L} \downarrow^{L}_{K}, \: \Theta_{K^{+}} \rangle = 
\xi(f\! \downarrow^{L'}_{L}). 
\end{equation*}
As in the case of quasi-pullback, 
this implies 
\begin{eqnarray*}
(f\! \downarrow^{L'}_{L})\ast_{I}(g\! \downarrow^{L'}_{L}) 
& = & 
\xi(f\! \downarrow^{L'}_{L})\cdot (g\! \downarrow^{L'}_{L}) 
\: = \: \xi'(f)\cdot (g\! \downarrow^{L'}_{L}) \\ 
& = & 
(\xi'(f)\cdot g)\! \downarrow^{L'}_{L} 
\: = \: (f\ast_{I'}g)\! \downarrow^{L'}_{L}. 
\end{eqnarray*}
This proves Proposition \ref{prop: functorial push}. 
\end{proof}


\end{document}